\def\XXint#1#2#3{{\setbox0=\hbox{$#1{#2#3}{\int}$ }
		\vcenter{\hbox{$#2#3$ }}\kern-.6\wd0}}
\newtheorem{thm}{Theorem}[section]
\newtheorem{prop}[thm]{Proposition}
\newtheorem{defn}[thm]{Definition}
\newtheorem{lem}[thm]{Lemma}
\newtheorem{cor}[thm]{Corollary}
\newtheorem{conj}[thm]{Conjecture}
\theoremstyle{remark}
\newtheorem{rk}[thm]{Remark}
\newtheorem{exercise}{Exercise}
\newcommand{\del}{\partial}
\newcommand{\dbar}{\bar\partial}
\newcommand{\ddbar}{\sqrt{-1}\partial\bar\partial}
\newcommand{\Tr}{{\rm Tr}}
\numberwithin{equation}{section}
\author{Tristan C. Collins}
\email{\href{mailto:tristanc@math.toronto.edu}{tristanc@math.toronto.edu}}
\address{Department of Mathematics, University of Toronto, 40 St. George Street, Toronto, ON, Canada}
\date{\today}
\newcommand{\nc}{\newcommand}
\nc{\p}{\partial}
\nc{\pb}{\partial_b}
\nc{\pc}{\partial_c}
\nc{\pd}{\partial_d}
\nc{\pe}{\partial_e}
\nc{\pf}{\partial_f}
\nc{\pg}{\partial_g}
\nc{\ph}{\partial_h}
\nc{\pari}{\partial_i}
\nc{\pj}{\partial_j}
\nc{\pk}{\partial_k}
\nc{\pl}{\partial_l}
\nc{\pell}{\partial_\ell}
\nc{\parm}{\partial_m}
\nc{\pn}{\partial_n}
\nc{\po}{\partial_o}
\nc{\pp}{\partial_p}
\nc{\pq}{\partial_q}
\nc{\pr}{\partial_r}
\nc{\ps}{\partial_s}
\nc{\pt}{\partial_t}
\nc{\pu}{\partial_u}
\nc{\pv}{\partial_v}
\nc{\pw}{\partial_w}
\nc{\px}{\partial_x}
\nc{\py}{\partial_y}
\nc{\pz}{\partial_z}
\nc{\pabar}{\partial_{\ol{a}}}
\nc{\pbbar}{\partial_{\ol{b}}}
\nc{\pcbar}{\partial_{\ol{c}}}
\nc{\pdbar}{\partial_{\ol{d}}}
\nc{\pebar}{\partial_{\ol{e}}}
\nc{\pfbar}{\partial_{\ol{f}}}
\nc{\pgbar}{\partial_{\ol{g}}}
\nc{\phbar}{\partial_{\ol{h}}}
\nc{\pibar}{\partial_{\ol{i}}}
\nc{\pjbar}{\partial_{\ol{j}}}
\nc{\pkbar}{\partial_{\ol{k}}}
\nc{\plbar}{\partial_{\ol{l}}}
\nc{\pellbar}{\partial_{\ol{\ell}}}
\nc{\pmbar}{\partial_{\ol{m}}}
\nc{\pnbar}{\partial_{\ol{n}}}
\nc{\pobar}{\partial_{\ol{o}}}
\nc{\ppbar}{\partial_{\ol{p}}}
\nc{\pqbar}{\partial_{\ol{q}}}
\nc{\prbar}{\partial_{\ol{r}}}
\nc{\psbar}{\partial_{\ol{s}}}
\nc{\ptbar}{\partial_{\ol{t}}}
\nc{\pubar}{\partial_{\ol{u}}}
\nc{\pvbar}{\partial_{\ol{v}}}
\nc{\pwbar}{\partial_{\ol{w}}}
\nc{\pxbar}{\partial_{\ol{x}}}
\nc{\pybar}{\partial_{\ol{y}}}
\nc{\pzbar}{\partial_{\ol{z}}}
\nc{\nababar}{\nabla_{\ol{a}}}
\nc{\nabbbar}{\nabla_{\ol{b}}}
\nc{\nabcbar}{\nabla_{\ol{c}}}
\nc{\nabdbar}{\nabla_{\ol{d}}}
\nc{\nabebar}{\nabla_{\ol{e}}}
\nc{\nabfbar}{\nabla_{\ol{f}}}
\nc{\nabgbar}{\nabla_{\ol{g}}}
\nc{\nabhbar}{\nabla_{\ol{h}}}
\nc{\nabibar}{\nabla_{\ol{i}}}
\nc{\nabjbar}{\nabla_{\ol{j}}}
\nc{\nabkbar}{\nabla_{\ol{k}}}
\nc{\nablbar}{\nabla_{\ol{l}}}
\nc{\nabelllbar}{\nabla_{\ol{\ell}}}
\nc{\nabmbar}{\nabla_{\ol{m}}}
\nc{\nabnbar}{\nabla_{\ol{n}}}
\nc{\nabobar}{\nabla_{\ol{o}}}
\nc{\nabpbar}{\nabla_{\ol{p}}}
\nc{\nabqbar}{\nabla_{\ol{q}}}
\nc{\nabrbar}{\nabla_{\ol{r}}}
\nc{\nabsbar}{\nabla_{\ol{s}}}
\nc{\nabtbar}{\nabla_{\ol{t}}}
\nc{\nabubar}{\nabla_{\ol{u}}}
\nc{\nabvbar}{\nabla_{\ol{v}}}
\nc{\nabwbar}{\nabla_{\ol{w}}}
\nc{\nabxbar}{\nabla_{\ol{x}}}
\nc{\nabybar}{\nabla_{\ol{y}}}
\nc{\nabzbar}{\nabla_{\ol{z}}}
\nc{\naba}{\nabla_{a}}
\nc{\nabb}{\nabla_{b}}
\nc{\nabc}{\nabla_{c}}
\nc{\nabd}{\nabla_{d}}
\nc{\nabe}{\nabla_{e}}
\nc{\nabf}{\nabla_{f}}
\nc{\nabg}{\nabla_{g}}
\nc{\nabh}{\nabla_{h}}
\nc{\nabi}{\nabla_{i}}
\nc{\nabj}{\nabla_{j}}
\nc{\nabk}{\nabla_{k}}
\nc{\nabl}{\nabla_{l}}
\nc{\nabm}{\nabla_{m}}
\nc{\nabn}{\nabla_{n}}
\nc{\nabo}{\nabla_{o}}
\nc{\nabp}{\nabla_{p}}
\nc{\nabq}{\nabla_{q}}
\nc{\nabr}{\nabla_{r}}
\nc{\nabs}{\nabla_{s}}
\nc{\nabt}{\nabla_{t}}
\nc{\nabu}{\nabla_{u}}
\nc{\nabv}{\nabla_{v}}
\nc{\nabw}{\nabla_{w}}
\nc{\nabx}{\nabla_{x}}
\nc{\naby}{\nabla_{y}}
\nc{\nabz}{\nabla_{z}}
\nc{\ola}{\ol{a}}
\nc{\olb}{\ol{b}}
\nc{\olc}{\ol{c}}
\nc{\old}{\ol{d}}
\nc{\ole}{\ol{e}}
\nc{\olf}{\ol{f}}
\nc{\olg}{\ol{g}}
\nc{\olh}{\ol{h}}
\nc{\oli}{\ol{i}}
\nc{\olj}{\ol{j}}
\nc{\olk}{\ol{k}}
\nc{\oll}{\ol{l}}
\nc{\olm}{\ol{m}}
\nc{\oln}{\ol{n}}
\nc{\olo}{\ol{o}}
\nc{\olp}{\ol{p}}
\nc{\olq}{\ol{q}}
\nc{\olr}{\ol{r}}
\nc{\ols}{\ol{s}}
\nc{\olt}{\ol{t}}
\nc{\olu}{\ol{u}}
\nc{\olv}{\ol{v}}
\nc{\olw}{\ol{w}}
\nc{\olx}{\ol{x}}
\nc{\oly}{\ol{y}}
\nc{\olz}{\ol{z}}
\newcommand{\mr}[1]{{\rm #1}}
\nc{\sn}{\mr{sn}}
\nc{\cn}{\mr{cn}}
\nc{\dn}{\mr{dn}}
\nc{\ol}{\overline}
\nc{\ul}{\underline}
\nc{\iddbar}{\sqrt{-1}\partial \overline{\partial}}
\newcommand{\RNum}[1]{\uppercase\expandafter{\romannumeral #1\relax}}
\nc{\todo}{{{\color{red}\huge {TO DO}}\errormessage} }
\nc{\lob}{\Lambda}
\renewcommand{\Im}{\mr{Im}}
\title[An introduction to Conifold Transitions]{An introduction to Conifold Transitions}
\begin{document}
\maketitle
\begin{abstract}
These lecture notes introduce conifold transitions between complex threefolds with trivial canonical bundle from the differential geometric point of view, and with a particular view towards aspects of mathematical physics and string theory.  The lecture notes are aimed at beginning graduate students and non-experts, emphasizing explicit calculations and examples.  After a brief introduction in Section~\ref{sec: intro}, we recall some basic facts about Calabi-Yau manifolds in Section~\ref{sec: CY3}. Section~\ref{sec: conifold} studies the conifold as a Calabi-Yau manifold with singularities, and introduces the local model for a conifold transition. Section~\ref{sec: GlobalConifoldTrans} discusses global conifold transitions, and recalls the famous result of Friedman \cite{Fri86} concerning the existence of smoothings for nodal Calabi-Yau threefolds.  We give a differential geometric proof of the necessity part of Friedman's theorem. Section~\ref{sec: web} discusses Reid's fantasy, and the web of Calabi-Yau threefolds. Section~\ref{sec: metric} discusses metric aspects of the local conifold transition, constructing explicit asymptotically conical Calabi-Yau metrics on the small resolution and the smoothing. Section~\ref{sec: geomConifold} discusses the metric aspects of global conifold transitions, with a particular emphasis on the heterotic string.
\end{abstract}
\section{Introduction}\label{sec: intro}

Conifold transitions, discovered by Clemens \cite{Clemens} and Friedman \cite{Friedman, Fri86}, are topology changing processes consisting of a birational contraction followed by a smoothing.  It has been proposed by Reid \cite{Reid} that these transitions can be used to connect moduli spaces of Calabi-Yau threefolds with distinct Hodge numbers, and that the resulting ``web" of Calabi-Yau manifolds is connected. In the physics literature, it has been proposed by Green-H\"ubsch \cite{GH1, GH2}, Candelas-Green-H\"ubsch \cite{CGH} that conifold transitions may unify string vaccua arising from Calabi-Yau threefolds with distinct Hodge numbers.  Strominger \cite{Stromingerholes} and Greene-Morrison-Strominger \cite{GMS} showed that, for type II string theories, this process is continuous at the level of string physics and hence could lead to a ``unified string vacuum".  These ideas have inspired the study of conifold transitions from the point of view of geometric partial differential equations, particularly those related to string vacuum equations, as proposed by Yau. The purpose of these lecture notes is to give an introduction to this circle of ideas from the perspective of differential geometry and geometric analysis, accessible to a beginning graduate student.  

Before explaining what is covered in these lecture notes, let us list the topics which are {\em not} covered.  First, we shall focus entirely on the topic of conifold transitions, ignoring completely the more general subject of {\em geometric transitions}.  However, many of the questions we will ask, (and, in a few cases, answer) have analogues in the general setting of geometric transitions.  We refer the reader to the survey article of Rossi \cite{Rossi} and the references therein for an introduction to this general circle of ideas, as well as some discussion of their relevance and importance for mathematical physics.  Secondly, our intention is that these lecture notes will be accessible to beginning graduate students, and non-experts.  For this reason, we will give very few proofs and instead emphasize examples and explicit calculations.  We have not attempted to give a comprehensive review of the many recent advances concerning the mathematical study of non-K\"ahler geometry and the heterotic, or type II string; for this we refer the reader to the survey articles \cite{Rossi, PicardSurvey1, PicardSurvey2, PhongSurvey, GF-survey} and the references therein.   Finally, we have focused completely on the complex geometric side of the story.  There is also a symplectic version of conifold transitions, pioneered by Smith-Thomas-Yau \cite{STY}.  Though this side of the story is less studied from the perspective of geometric PDE, it is equally rich and interesting.  It is only for lack of space (and time) that we have omitted it.

The plan of the lecture notes, and a brief synopsis, is as follows:
\bigskip

$\bullet$ {\em Section~\ref{sec: CY3}}:  In this section we give the definition of a Calabi-Yau threefold, and recall Yau's theorem on the existence of Ricci-flat K\"ahler metrics, as well as the Bogomolov-Tian-Todorov theorem on the structure of the moduli space of K\"ahler Calabi-Yau manifolds.  We compute the Hodge diamond of a projective hypersurface, and study the moduli space of quintic threefolds in $\mathbb{P}^4$.  
\smallskip

$\bullet$ {\em Section~\ref{sec: conifold}}: In this section we study the conifold as a Calabi-Yau manifold with singularities, and identify the local model for a conifold transition.  We exhibit two methods for ``resolving" the conifold singularity; first by small resolution and second by smoothing.  We introduce the notion of a special Lagrangian and identify the vanishing cycle of the smoothing as a special Lagrangian.  
\smallskip

$\bullet$ {\em Section~\ref{sec: GlobalConifoldTrans}}: In this section we discuss global conifold transitions, and recall the result of Friedman \cite{Fri86} concerning the existence of smoothings for nodal Calabi-Yau threefolds.  We give a differential geometric proof of (part of) Friedman's theorem, using the special Lagrangian vanishing cycles identified in Section~\ref{sec: conifold}.  We give several explicit examples of conifold transitions, and construct examples of rigid, and non-K\"ahler Calabi-Yau threefolds.
\smallskip

$\bullet$ {\em Section~\ref{sec: web}}: In this section we discuss Reid's fantasy, and the web of Calabi-Yau threefolds.  To motivate this discussion we recall some basic facts about the moduli of $K3$ surfaces.  
\smallskip

$\bullet$ {\em Section~\ref{sec: metric}}: In this section we discuss metric aspects of the local conifold transition.  We construct explicit asymptotically conical Calabi-Yau metrics on the smoothing, and the small resolution of the conifold, following constructions of Candelas-de la Ossa \cite{CdlO90}.  In particular, we observe that the local conifold transition is continuous in a metric sense.
\smallskip

$\bullet$ {\em Section~\ref{sec: geomConifold}}: In this section we discuss the metric aspects of global conifold transitions.  We introduce the heterotic string (HS) system, and show that K\"ahler Calabi-Yau manifolds with K\"ahler-Ricci flat metrics solve the HS system.  We discuss progress towards solving the HS system through a conifold transition.  We illustrate how the local geometry of the conifold transition can be used to construct solutions of (parts of) the HS system by gluing techniques, taking as a particular example the work of the author, Picard and Yau \cite{CPY}.

 \bigskip

 \noindent {\bf Acknowledgements}:  These lecture notes are based on a series of lectures given at the C.I.M.E summer school on Calabi-Yau varieties.  I am very grateful to the organizers Simone Diverio, Vincent Guedj, and Hoang Chinh Lu for the kind invitation to participate, and for their patience during the (long overdue) preparation of these notes.  Versions of these lectures were also delivered at National Taiwan University in 2024, and at the 2025 Southern California Geometric Analysis Winter School at UC Irvine.  I am grateful to Chin-Lung Wang, and Jeff Streets for their kind hospitality.  I would like to thank Sebastien Picard, Robert Friedman and Duong Phong for helpful comments on an early draft of these notes.  Finally, I am grateful to my collaborators, Sebastien Picard, Sergei Gukov, Shing-Tung Yau and Duong Phong for many fruitful discussions on conifold transitions and aspects of string theory over the past several years.  The author is supported in part by NSERC Discovery grant RGPIN-2024-518857, and NSF CAREER grant DMS-1944952.

\section{Calabi-Yau threefolds}\label{sec: CY3}

For the purposes of these lectures we will be interested mostly in complex $3$-folds.  We shall use the following strong notion of a Calabi-Yau manifold. For other more flexible notions of non-K\"ahler Calabi-Yau manifolds, see for examples \cite{Tosatti}, and the references therein.

\begin{defn}
A Calabi-Yau threefold is a simply connected complex $3$-fold $X$ with $K_{X}\sim \mathcal{O}_{X}$.  If $X$ is, in addition, K\"ahler then we shall say that $X$ is a K\"ahler Calabi-Yau threefold.
\end{defn}

We shall denote by $\Omega$ the non-vanishing holomorphic $(3,0)$ form.  Recall the following fundamental theorem, which is a special case of Yau's resolution of the Calabi conjecture.

\begin{thm}[Yau, \cite{Yau78}]\label{thm: Yau}
Let $(X,\omega)$ be a compact, K\"ahler Calabi-Yau threefold.  Then there exists a unique K\"ahler metric $\omega_{CY}$ cohomologous to $\omega$ and satisfying the complex Monge-Amp\`ere equation
\begin{equation}\label{eq: CMA}
\omega_{CY}^3 = c (\sqrt{-1})^{3^2} \Omega \wedge\overline{\Omega}.
\end{equation}
for $c \in \mathbb{R}_{>0}$. In particular, the associated Riemannian metric has zero Ricci curvature.
 \end{thm}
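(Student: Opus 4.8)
The plan is to recast \eqref{eq: CMA} as a complex Monge--Amp\`ere equation for a single scalar potential and then solve it by the continuity method. Since $\omega_{CY}$ and $\omega$ represent the same K\"ahler class, the $\partial\bar\partial$-lemma on the compact K\"ahler manifold $X$ lets me write $\omega_{CY} = \omega + \ddbar\varphi$ for a smooth real function $\varphi$, unique up to an additive constant. Writing the right-hand side of \eqref{eq: CMA} as $e^{F}\omega^{3}$ for a smooth function $F$, the equation becomes
\begin{equation}\label{eq: MAproposal}
(\omega + \ddbar\varphi)^{3} = e^{F}\omega^{3}, \qquad \omega + \ddbar\varphi > 0.
\end{equation}
Integrating both sides over $X$ and using that $\int_{X}(\omega+\ddbar\varphi)^{3} = \int_{X}\omega^{3}$ depends only on the class $[\omega]$ (Stokes' theorem) forces the normalization $\int_{X} e^{F}\omega^{3} = \int_{X}\omega^{3}$, which simultaneously pins down the constant $c$. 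Thus the problem is equivalent to solving \eqref{eq: MAproposal} for $\varphi$; note that $\varphi$ is determined only up to a constant, so it is uniqueness of $\omega_{CY}$, not of $\varphi$, that must be established at the end, and positivity $c>0$ is automatic since the prescribed volume form is positive.

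For existence I would run the continuity method on the family
\begin{equation}\label{eq: familyproposal}
(\omega + \ddbar\varphi_{t})^{3} = e^{tF + c_{t}}\omega^{3}, \qquad t\in[0,1],
\end{equation}
where $c_{t}$ is chosen so that $\int_{X} e^{tF+c_{t}}\omega^{3} = \int_{X}\omega^{3}$. At $t=0$ the pair $(\varphi_{0},c_{0}) = (0,0)$ solves the equation, so the set $S\subset[0,1]$ of parameters admitting a smooth solution with $\omega+\ddbar\varphi_{t}>0$ is nonempty. I would show $S$ is open via the implicit function theorem in H\"older spaces: the linearization of $\varphi\mapsto\log\big((\omega+\ddbar\varphi)^{3}/\omega^{3}\big)$ at a solution is the Laplacian $\lapl_{\omega_{\varphi_{t}}}$ of the metric $\omega_{\varphi_{t}} = \omega+\ddbar\varphi_{t}$, which is an isomorphism from mean-zero functions to mean-zero functions by Hodge theory on the compact manifold $X$; hence nearby values of $t$ remain in $S$.

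The heart of the argument, and the main obstacle, is closedness of $S$, which demands a priori estimates on $\varphi_{t}$ uniform in $t$. I would establish these in the classical order. First a $C^{0}$ estimate, normalizing $\sup_{X}\varphi_{t}=0$ and bounding $\|\varphi_{t}\|_{C^{0}}$; the sharpest route is Yau's Moser iteration applied to powers of $\varphi_{t}$, exploiting the structure of \eqref{eq: familyproposal}. Next a second-order estimate controlling $\tr_{\omega}\omega_{\varphi_{t}}$ (equivalently $\lapl_{\omega}\varphi_{t}$) by a maximum-principle argument on a quantity such as $e^{-A\varphi_{t}}\tr_{\omega}\omega_{\varphi_{t}}$, in which the bisectional curvature of $\omega$ enters and $A$ is taken large; combined with the $C^{0}$ bound this yields two-sided bounds on the eigenvalues of $\omega_{\varphi_{t}}$ relative to $\omega$, i.e. uniform ellipticity. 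With the equation made uniformly elliptic, a $C^{2,\alpha}$ estimate follows either from Calabi's third-order computation or, more cleanly, from the Evans--Krylov theorem for concave fully nonlinear elliptic equations, and Schauder theory then bootstraps to uniform $C^{k}$ bounds for every $k$. A sequence $\varphi_{t_{j}}$ with $t_{j}\to t_{\infty}$ therefore subconverges to a smooth solution, so $S$ is closed and connectedness gives $1\in S$. Finally, for uniqueness: if $\omega_{\varphi}$ and $\omega_{\psi}$ both solve \eqref{eq: MAproposal}, the factorization $\omega_{\varphi}^{3}-\omega_{\psi}^{3} = \ddbar(\varphi-\psi)\wedge\big(\omega_{\varphi}^{2}+\omega_{\varphi}\wedge\omega_{\psi}+\omega_{\psi}^{2}\big)$ has vanishing left-hand side, so integrating the right-hand side against $\varphi-\psi$ and using positivity of the $(2,2)$-form in parentheses forces $\partial(\varphi-\psi)=0$; thus $\varphi-\psi$ is constant and $\omega_{CY}$ is unique.
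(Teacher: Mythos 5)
The paper states this theorem without proof, deferring entirely to the cited reference \cite{Yau78}, and your proposal is a correct outline of exactly that proof: reduction to a scalar complex Monge--Amp\`ere equation via the $\partial\bar\partial$-lemma with the constant $c$ fixed by the cohomological volume normalization, the continuity method with openness from the implicit function theorem and closedness from the $C^0$ (Moser iteration), second-order (maximum principle with bisectional curvature), and $C^{2,\alpha}$ (Calabi third-order or Evans--Krylov) estimates, followed by Calabi's integration-by-parts uniqueness argument. Your approach is the standard one and is consistent with everything in the paper.
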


\subsection{Moduli spaces of K\"ahler Calabi-Yau $3$-folds}

K\"ahler Calabi-Yau manifolds typically occur in moduli spaces.  There are two obvious moduli parameters: the complex structure and the cohomology class of the K\"ahler form.  For our purposes we shall mostly be interested in the complex structure moduli. 
 Rather than attempting to give the general theory, we shall instead consider the simplest possible example; namely, the quintic threefold in $\mathbb{P}^4$.  Let $[Z_0:\cdots:Z_4] \in \mathbb{P}^4$ and consider the set
\[
X := \{P(Z_0,\ldots, Z_4)=0\} \subset \mathbb{P}^4
\]
where
\[
P(Z_0,\ldots, Z_4)= \sum_{\left\{(i_0, \ldots, i_4) \in \mathbb{Z}_{\geq 0}^5\,:\, i_0+\dots+i_4=5\right\}} a_{I} Z_0^{i_0}\cdots Z_4^{i_4}
\]
is any non-zero homogeneous polynomial of degree $5$.  For generic choices of $a_I \in \mathbb{C}$, $X$ is a smooth complex hypersurface in $\mathbb{P}^4$.  Furthermore, $K_{X} \sim \mathcal{O}_{X}$ by the adjunction formula.  We have the following lemma.

\begin{lem}\label{lem: HodgeDiamondQuintic}
The Hodge diamond of a smooth quintic hypersurface in $\mathbb{P}^4$ is 
\[
\begin{array}{ccccccc}
\,&\,&\,&1&\,&\,&\, \\
\,&\,&0&\,&0&\,&\, \\
\,&0&\,&1&\,&0&\, \\
1&\,&101&\,&101&\,&1\\
\,&0&\,&1&\,&0&\,\\
\,&\,&0&\,&0&\,&\,\\
\,&\,&\,&1&\,&\,&
\end{array}
\]
\end{lem}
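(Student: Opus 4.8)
The plan is to determine every entry of the diamond from general structural facts, leaving only the single Hodge number $h^{2,1}$ to be computed, and then to extract that number from the topological Euler characteristic.

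First I would record the entries forced by the Calabi-Yau hypotheses, together with Hodge symmetry $h^{p,q}=h^{q,p}$, complex conjugation, and Serre/Poincar\'e duality $h^{p,q}=h^{3-p,3-q}$. Since $X$ is a smooth hypersurface of dimension $3\ge 2$ in $\mathbb{P}^4$, the Lefschetz hyperplane theorem gives $\pi_1(X)\cong\pi_1(\mathbb{P}^4)=0$ and isomorphisms $H^k(X,\C)\cong H^k(\mathbb{P}^4,\C)$ for $k<3$; hence $b_0=b_2=1$ and $b_1=0$, so that $h^{1,0}=h^{0,1}=0$. Triviality of $K_X$ gives $h^{3,0}=\dim H^0(X,K_X)=\dim H^0(X,\mathcal{O}_X)=1$, and the symmetries propagate this around the whole diamond. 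For the $H^2$ slice, the hyperplane class is a nonzero element of $H^{1,1}(X)$, so $1\le h^{1,1}\le b_2=1$; since $b_2=h^{2,0}+h^{1,1}+h^{0,2}$ with all terms nonnegative, this forces $h^{1,1}=1$ and $h^{2,0}=h^{0,2}=0$. At this point the only undetermined entry is $h^{2,1}=h^{1,2}$.

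Next I would reduce $h^{2,1}$ to the Euler characteristic. With the values above, $b_3=h^{3,0}+h^{2,1}+h^{1,2}+h^{0,3}=2+2h^{2,1}$, while Poincar\'e duality gives $b_4=b_2=1$, $b_5=b_1=0$, and $b_6=b_0=1$. Hence
\[
\chi(X)=\sum_{k=0}^{6}(-1)^k b_k = 2-2h^{2,1},
\]
so it remains only to show $\chi(X)=-200$. This I would obtain from $\chi(X)=\int_X c_3(T_X)$: the Euler sequence gives $c(T_{\mathbb{P}^4})=(1+h)^5$ with $h$ the hyperplane class, and the normal bundle sequence $0\to T_X\to T_{\mathbb{P}^4}|_X\to\mathcal{O}_X(5)\to 0$ gives $c(T_X)=(1+h)^5(1+5h)^{-1}|_X$; expanding modulo $h^4$ yields $c_3(T_X)=-40\,h^3$, and since $\int_X h^3=\deg X=5$ we get $\chi(X)=-200$ and therefore $h^{2,1}=101$.

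The only substantive step is this Euler characteristic computation; everything else is bookkeeping with standard structural theorems. The \emph{conceptual} obstacle is justifying $h^{2,1}=101$ in a way that exhibits its meaning rather than merely invoking $\chi$, and for this I would mention the more illuminating alternative via Griffiths' residue theory: since $H^1(X)=0$ all of $H^3(X)$ is primitive, and for a smooth quintic one has $H^{2,1}(X)\cong R_5$, the degree-$5$ piece of the Jacobian ring $R=\C[Z_0,\dots,Z_4]/(\partial P/\partial Z_0,\dots,\partial P/\partial Z_4)$, whose Hilbert series is $\big((1-t^4)/(1-t)\big)^5$; the coefficient of $t^5$ is $\binom{9}{4}-5\binom{5}{4}=126-25=101$. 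This agrees with the direct count of quintic monomials ($\binom{9}{4}=126$) modulo overall rescaling and the $\dim\mathrm{PGL}_5=24$ automorphisms of $\mathbb{P}^4$, which is also how one sees that the complex moduli space of the quintic has dimension $101$.
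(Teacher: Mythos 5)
Your proof is correct, but the crucial step is genuinely different from the paper's. Both arguments handle the off-middle entries identically (Lefschetz hyperplane theorem, triviality of $K_X$, Hodge symmetry and Serre duality), so the comparison comes down to how $h^{2,1}$ is extracted. The paper computes the \emph{holomorphic} Euler characteristic $\chi(\Omega^1_X)$ by chaining the Euler sequence, the conormal sequence and the restriction sequence, evaluating terms with Kodaira vanishing and Serre duality; this gives $\chi(\Omega^1_X)=100$ for the quintic and hence $h^{2,1}=101$. You instead compute the \emph{topological} Euler characteristic: $c(T_X)=(1+h)^5(1+5h)^{-1}\big|_X$ gives $c_3(T_X)=-40\,h^3$, so $\chi(X)=-200$, and solving $2-2h^{2,1}=-200$ gives $h^{2,1}=101$ (your Chern class expansion and the identity $\chi(X)=2-2h^{2,1}$ both check out). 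Your route is shorter and needs no sheaf cohomology beyond the structural facts, but it succeeds only because in the Calabi-Yau threefold case the middle row $1,\,h^{2,1},\,h^{2,1},\,1$ contains a single unknown, so the one linear relation supplied by $\chi(X)$ suffices. The paper's method is deliberately set up for a general degree $d$ hypersurface in $\mathbb{P}^n$ and computes each $\chi(\Omega^p_X)$ separately, so it determines every individual middle Hodge number $h^{p,n-1-p}$, which a single topological Euler characteristic cannot do once the middle row has more than one independent entry. Your closing remark on Griffiths residues ($H^{2,1}(X)\cong R_5$, with the Jacobian ring Hilbert series giving $126-25=101$) is a nice conceptual cross-check that dovetails with the paper's subsequent parameter count for quintics, though as written it is an aside rather than a second complete proof.
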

\begin{proof}
This is an exercise in applying standard results from complex algebraic geometry.  In fact, we will explain a general procedure for computing the Hodge diamond of a smooth, degree $d$ hypersurface $X\subset \mathbb{P}^n$, and then specialized to the case of a quintic hypersurface only at the end.  First, by the Lefshetz hyperplane theorem and Serre duality we have that
\[
h^{n-1-p, n-1-q}(X) = h^{p,q}(X) = h^{p,q}(\mathbb{P}^4)  \quad p+q < n-1
\]
This yields all the Hodge numbers except for $h^{p,q}(X)$ for $p+q=n-1$.  We have
\[
h^{p,q}(X) = \delta_{pq} \quad p+q \ne n-1
\]
Thus, we only need to compute $h^{p,q}$ for $p+q =n-1$.   Let $\Omega_X^p$ denote the sheaf of holomorphic $p$-forms, and recall that the holomorphic Euler characteristic is given by
\begin{equation}\label{eq: holEuler}
\begin{aligned}
\chi(\Omega_{X}^p) &= \sum_{q=0}^{n-1}(-1)^q\dim H^{q}(X,\Omega_{X}^p) \\
&= \sum_{q=0}^{n-1}(-1)^qh^{p,q}(X)\\
& = (-1)^{n-1-p}h^{p,n-1-p}(X) +(-1)^p
\end{aligned}
\end{equation}
Thus, it suffices to compute $\chi(\Omega_{X}^p)$.  To do this we will use the fact that the holomorphic Euler characteristic is additive on exact sequences, together with two basic exact sequences.  The first is the Euler sequence
\begin{equation}\label{eq: EulerSEQ}
0 \rightarrow  \Omega^1_{\mathbb{P}^n} \rightarrow \mathcal{O}_{\mathbb{P}^n}(-1)^{\oplus (n+1)} \rightarrow \mathcal{O}_{\mathbb{P}^n} \rightarrow 0.
\end{equation}
Taking the wedge power of the Euler exact sequence yields (for any $0\leq p\leq n$)
\begin{equation}\label{eq: SEQ1}
0 \rightarrow  \Omega^p_{\mathbb{P}^n} \rightarrow \bigwedge^p\left(\mathcal{O}_{\mathbb{P}^n}(-1)^{\oplus (n+1)} \right)\rightarrow \Omega_{\mathbb{P}^n}^{p-1}\rightarrow 0
\end{equation}
We can expand the middle term as
\[
 \bigwedge^p\left(\mathcal{O}_{\mathbb{P}^n}(-1)^{\oplus (n+1)} \right) =  \bigwedge^p\left(\mathcal{O}_{\mathbb{P}^n}(-1)^{\oplus n} \right) \oplus \left( \mathcal{O}_{\mathbb{P}^n} (-1) \otimes \bigwedge^{p-1}(\mathcal{O}_{\mathbb{P}^n}(-1)^{\oplus n})\right).
 \]
 Applying this formula inductively shows that   
 \begin{equation}\label{eq: simpWedge}
 \bigwedge^p\left(\mathcal{O}_{\mathbb{P}^n}(-1)^{\oplus (n+1)} \right)  = \bigoplus_{i=1}^{\binom{n+1}{p}}\mathcal{O}_{\mathbb{P}^n}(-p).
 \end{equation}
Now we turn our attention to the conormal exact sequence
\begin{equation}\label{eq: conormalSEQ}
 0 \rightarrow \mathcal{O}_{X}(-d) \rightarrow \iota^*\Omega^1_{\mathbb{P}^n}\rightarrow \Omega^{1}_{X}\rightarrow 0
\end{equation} 
Taking the $p$-th wedge power yields
\begin{equation}\label{eq: conormalSEQ1}
0\rightarrow \Omega_{X}^{p-1}(-d) \rightarrow \iota^*\Omega^p_{\mathbb{P}^n} \rightarrow \Omega^{p}_{X} \rightarrow 0
\end{equation}
for $1\leq p \leq n-1$.  Finally, we have the restriction exact sequence
\begin{equation}\label{eq: restExact}
0\rightarrow \mathcal{O}_{\mathbb{P}^n}(r-d) \rightarrow  \mathcal{O}_{\mathbb{P}^n}(r) \rightarrow  \mathcal{O}_{X}(r)\rightarrow 0
\end{equation}


We can now compute $h^{p,q}(X)$ for $p+q=n-1$.
Twisting~\eqref{eq: conormalSEQ1} and taking the Euler characteristic yields
\[
\chi(\Omega_{X}^p(-r)) = \chi(\iota^*\Omega_{\mathbb{P}^n}^{p}(-r)) - \chi(\Omega_{X}^{p-1}(-r-d))
\]
On the other hand, tensoring the restriction exact sequence by $\Omega^{p}_{\mathbb{P}^n}$ we have
\[
\chi(\iota^*\Omega_{\mathbb{P}^n}^{p}(-r)) = \chi(\Omega_{\mathbb{P}^n}^{p}(-r)) - \chi(\Omega_{\mathbb{P}^n}^{p}(-r-d))  
\]
Now $\chi(\Omega_{\mathbb{P}^n}^{p}(-r)) $ can be computed inductively using~\eqref{eq: SEQ1} and~\eqref{eq: simpWedge}.  Thus $\chi(\Omega_{X}^p(-r))$ is determined by $\chi(\Omega_{X}^{p-1}(-r-d))$, and hence we can perform induction on $p$. To illustrate this we will carry out the case $p=1$, since this suffices to determined the Hodge diamond of the quintic.  From ~\eqref{eq: SEQ1} and~\eqref{eq: simpWedge} we see that, for any $r \in \mathbb{Z}_{>0}$
\[
\begin{aligned}
\chi(\Omega_{\mathbb{P}^n}(-r)) &= (n+1)\chi(\mathcal{O}_{\mathbb{P}^n}(-r-1)) - \chi(\mathcal{O}_{\mathbb{P}^n} (-r))\\
\end{aligned}
\]
Now for and $r>0$ the Kodaira vanishing theorem  and Serre duality yields
\[
\chi(\mathcal{O}_{\mathbb{P}^n} (-r))= (-1)^n\dim H^{0}(\mathbb{P}^n, \mathcal{O}_{\mathbb{P}^n}(r-(n+1))
\]
while, for $r=0$ we have
\[
 \chi(\Omega_{\mathbb{P}^n}) =-1.
 \]
Now recall that 
 \[
\dim H^{0}(\mathbb{P}^n, \mathcal{O}_{\mathbb{P}^n}(k)) = \binom{n+k}{n}.
 \]
 For simplicity let us extend the definition by $\binom{m}{n}=0$ if $m <n$.  Now we obtain
 \[
 \chi(\iota^*\Omega_{\mathbb{P}^n}) =-1 -(-1)^n\left( (n+1)\binom{d}{n}- \binom{d-1}{ n}\right)
 \]
 To compute $\chi(\mathcal{O}_{X}(-d))$ we use the restriction exact sequence~\eqref{eq: restExact} to obtain
 \[
 \chi(\mathcal{O}_{X}(-d))=\chi(\mathcal{O}_{\mathbb{P}^n}(-d))-\chi(\mathcal{O}_{\mathbb{P}^n}(-2d))
 \]
and, by Kodaira vanishing, for any $r \in \mathbb{Z}_{>0}$ we have
 \[
 \chi(\mathcal{O}_{\mathbb{P}^n}(-r))= (-1)^n\binom{r-1}{n}
 \]
 Thus, we arrive at
 \[
 \begin{aligned}
 \chi(\Omega_{X}) &= -1 -(-1)^n\left( (n+1)\binom{d}{n}- \binom{d-1}{ n}\right) + (-1)^n\left(\binom{2d-1}{n} - \binom{d-1}{n}\right)\\
 &=-1 -(-1)^n (n+1)\binom{d}{n} + (-1)^n\binom{2d-1}{n}
 \end{aligned}
 \]
 if we substitute $d=n+1$ then 
 \[
  \chi(\Omega_{X}) = -1-(-1)^n(n+1)^2+(-1)^n \binom{2n+1}{n}
  \]
  for $n=4$ this yields $  \chi(\Omega_{X}) = -1-25 +126= 100$, and so
  \[
h^{1,2} = h^{2,1} = 101
  \]
  \end{proof}
  
 Let's now count the number of parameters defining the quintic hypersurfaces in $\mathbb{P}^4$.  Naively counting the possible coefficients $a_{I}\in \mathbb{C}$ yields $126$ parameters.  However, we have over counted rather drastically.  First, note that if $P = \lambda P'$ for some $\lambda \in \mathbb{C}^*$ then $\{P=0\}= \{P'=0\}$, and so the space parameterizing quintic hypersurfaces has dimension at most $125 = 126-1$.  Next we observe that the automorphism group ${\rm Aut}(\mathbb{P}^4) = PGL(5, \mathbb{C})$ also acts on the quintic hypersurfaces, and any two hypersurfaces related by this action are isomorphic.  Since $\dim_{\mathbb{C}}=24$ we see that the space parameterizing quintic hypersurfaces has dimension at most $125 = 126-1-24=101.$   In fact, we have
 
 \begin{exercise}
 Suppose $X, X' \subset \mathbb{P}^4$ are smooth quintic hypersurfaces and there is a biholomorphic map $f: X \rightarrow X'$.  Show that there is an element $g\in PGL(5,\mathbb{C})$ such that $g\cdot X= X'$.
 \end{exercise}
 
 As a corollary of this exercise, we obtain a description of the moduli space of quintic threefolds as a Zariski open subset of $\mathbb{P}^{125}/PGL(5, \mathbb{C})$.  In particular, we have
 
 \begin{cor}
 The moduli space of smooth, quintic Calabi-Yau hypersurfaces $X\subset \mathbb{P}^4$ has dimension $101= h^{2,1}(X)$.
 \end{cor}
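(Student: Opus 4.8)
The plan is to combine the explicit parameter count carried out just above with the Exercise to realize the moduli space as a group quotient, and then to pin down its dimension by controlling the orbits of the $PGL(5,\mathbb{C})$-action. Since $\dim_{\mathbb{C}} H^0(\mathbb{P}^4,\mathcal{O}_{\mathbb{P}^4}(5)) = \binom{9}{4} = 126$, the degree-five hypersurfaces are parametrized by $\mathbb{P}^{125}$, and the smooth ones form a Zariski-open dense subset $U \subset \mathbb{P}^{125}$, namely the complement of the discriminant. By the Exercise, two smooth quintics are isomorphic precisely when they lie in the same $PGL(5,\mathbb{C})$-orbit, so as a set the moduli space is the orbit space $U/PGL(5,\mathbb{C})$. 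The dimension of such a quotient equals $\dim U$ minus the dimension of a generic orbit, so everything reduces to showing that the generic orbit has the full dimension $\dim PGL(5,\mathbb{C}) = 24$; equivalently, that a generic smooth quintic has finite stabilizer. The naive count already gives the upper bound $\dim \le 125 - 24 = 101$, and the content is to upgrade this to an equality.

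First I would reduce the orbit-dimension statement to a single example, using that the function $[P] \mapsto \dim \mathrm{Stab}_{PGL(5,\mathbb{C})}([P])$ is upper semicontinuous on $\mathbb{P}^{125}$, so that the locus where the stabilizer attains its minimal (here, zero) dimension is open. Since $U$ is irreducible, it then suffices to exhibit one smooth quintic whose stabilizer is zero-dimensional. I would take the Fermat quintic $P = Z_0^5 + \cdots + Z_4^5$ and compute the Lie algebra of its stabilizer directly: a linear vector field $\xi = \sum_{i,j} a_{ij} Z_j \partial_{Z_i}$ is tangent to $\{P=0\}$ iff $\xi(P) = cP$ for some constant $c$, and since $\xi(P) = 5\sum_{i,j} a_{ij} Z_i^4 Z_j$, the off-diagonal monomials $Z_i^4 Z_j$ with $i \ne j$ force $a_{ij}=0$, while matching the diagonal terms forces $a_{ii} = c/5$ for all $i$. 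Thus $\xi$ is a multiple of the Euler field $\sum_i Z_i \partial_{Z_i}$, which is trivial in $\mathfrak{pgl}(5,\mathbb{C}) = \mathfrak{sl}(5,\mathbb{C})$. Hence the stabilizer of the Fermat quintic has trivial Lie algebra and is finite.

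Combining these steps, the generic orbit in $U$ has dimension $24$, so the moduli space has dimension $\dim U - 24 = 125 - 24 = 101$. Finally, Lemma~\ref{lem: HodgeDiamondQuintic} gives $h^{2,1}(X) = 101$, which yields the asserted equality $\dim = 101 = h^{2,1}(X)$; this is exactly the value predicted by the Bogomolov--Tian--Todorov theorem, since for a Calabi--Yau threefold the holomorphic volume form $\Omega$ induces an isomorphism $H^1(X,T_X) \cong H^{2,1}(X)$.

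I expect the main obstacle to be the finiteness of the generic stabilizer in the second paragraph. Producing the upper bound $\le 101$ is immediate from the parameter count, but promoting it to an equality requires ruling out a positive-dimensional group of projective automorphisms for the generic member. The Fermat computation handles this cleanly; alternatively one could argue directly that the set of quintics $\{P=0\}$ admitting a nonzero $\xi \in \mathfrak{sl}(5,\mathbb{C})$ with $\xi(P) \in \mathbb{C}\cdot P$ is a proper (closed) subvariety of $\mathbb{P}^{125}$, so that the generic member has no infinitesimal symmetries.
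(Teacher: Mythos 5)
Your proposal is correct and follows essentially the same route as the paper: parametrize quintics by $\mathbb{P}^{125}$, identify isomorphism classes of smooth quintics with $PGL(5,\mathbb{C})$-orbits via the Exercise, and compute $125-24=101=h^{2,1}(X)$ using Lemma~\ref{lem: HodgeDiamondQuintic}. The one place you go beyond the paper is the Fermat-quintic stabilizer computation (with upper semicontinuity of stabilizer dimension), which rigorously upgrades the naive ``at most $101$'' count to an equality by showing the generic orbit has the full dimension $24$ --- a point the paper's argument leaves implicit.
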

 
 The equality between the dimensions of the moduli space and the Hodge number $h^{2,1}(X)$ is not an accident.  The following theorem of Bogomolov-Tian-Todorov describes the local structure of the complex structure moduli space of a general K\"ahler Calabi-Yau manifold.
 
 \begin{thm}[Bogomolov \cite{Bog}, Tian \cite{Tian2}, Todorov \cite{Todorov}]\label{thm: BTT}
 Let $X$ be a smooth, K\"ahler Calabi-Yau manifold, $\dim_{\mathbb{C}}X=n$.  Then the moduli space of complex structures is locally smooth of dimension $h^{n-1,1}(X)$.
 \end{thm}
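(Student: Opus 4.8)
The plan is to approach this through Kodaira--Spencer--Kuranishi deformation theory, where the essential point will be an \emph{unobstructedness} statement peculiar to the Calabi--Yau condition. Deformations of the complex structure on $X$ are encoded by Beltrami differentials $\phi \in A^{0,1}(X, T^{1,0}X)$, and the resulting almost complex structure is integrable exactly when $\phi$ solves the Maurer--Cartan equation
\begin{equation*}
\dbar \phi + \tfrac{1}{2}[\phi,\phi] = 0,
\end{equation*}
where $[\cdot,\cdot]$ is the Schouten bracket on $T^{1,0}$-valued forms. Kuranishi theory realizes the local deformation space as the zero locus of a holomorphic obstruction map $H^1(X,T_X)\to H^2(X,T_X)$, so smoothness of dimension $\dim H^1(X,T_X)$ follows once one shows this obstruction map vanishes identically. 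First I would identify the tangent space: contraction with the nowhere-vanishing holomorphic $(n,0)$-form $\Omega$ gives an isomorphism $T_X\cong \Omega^{n-1}_X$, so $H^1(X,T_X)\cong H^{n-1,1}(X)$, which already produces the claimed dimension. The real content is that, although the naive obstruction space $H^2(X,T_X)\cong H^{n-1,2}(X)$ is generally nonzero, every obstruction in fact dies.

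I would prove unobstructedness by solving the Maurer--Cartan equation formally, order by order. Writing $t=(t_1,\dots,t_m)$ with $m=h^{n-1,1}(X)$ and expanding $\phi=\sum_{k\geq 1}\phi_k$ with $\phi_k$ homogeneous of degree $k$ in $t$, the linear term $\phi_1=\sum_i t_i\eta_i$ is a general harmonic representative of $H^1(X,T_X)$, computed with respect to the Ricci-flat K\"ahler metric supplied by Theorem~\ref{thm: Yau}. Since $\Omega$ is then parallel, each $\eta_i\lrcorner\Omega$ is a harmonic $(n-1,1)$-form and so in particular $\partial(\eta_i\lrcorner\Omega)=0$. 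At order $k$ one must solve $\dbar\phi_k=\Gamma_k:=-\tfrac12\sum_{i+j=k}[\phi_i,\phi_j]$, and $\Gamma_k$ is $\dbar$-closed by the Jacobi identity and induction; the obstruction to solving is its harmonic part. Here I would invoke the Tian--Todorov lemma: maintaining the normalization $\partial(\phi_i\lrcorner\Omega)=0$ inductively, one has
\begin{equation*}
[\phi_i,\phi_j]\lrcorner\Omega = -\,\partial\bigl(\phi_i\lrcorner(\phi_j\lrcorner\Omega)\bigr),
\end{equation*}
so $\Gamma_k\lrcorner\Omega$ is $\partial$-exact; being also $\dbar$-closed, the $\partial\bar\partial$-lemma on the compact K\"ahler manifold $X$ forces it to be $\partial\bar\partial$-exact, hence $\dbar$-exact. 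Transporting back through the isomorphism $\,\cdot\lrcorner\Omega\,$ shows $\Gamma_k$ is $\dbar$-exact, and one chooses the solution $\phi_k$ (via the Green operator, with $\phi_k\lrcorner\Omega=-\partial\gamma$) so that $\partial(\phi_k\lrcorner\Omega)=0$ persists, closing the induction.

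Having solved the equation formally, the final step is convergence of the power series $\phi(t)$ for $t$ in a small polydisc, which follows from standard elliptic estimates for the Green operator applied to the recursion, as carried out by Tian and Todorov. This yields a genuine $h^{n-1,1}(X)$-dimensional family of integrable complex structures; since $\phi_1$ ranges over all of $H^1(X,T_X)$, the Kodaira--Spencer map of the family is an isomorphism, so the family is versal and the Kuranishi space is smooth of the asserted dimension.

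The hard part is precisely the unobstructedness, i.e.\ the interplay between the Tian--Todorov identity and the $\partial\bar\partial$-lemma in the middle paragraph: because $H^2(X,T_X)$ does not vanish, the obstructions die only after contracting with $\Omega$ and exploiting Hodge theory on the compact K\"ahler $X$. A conceptually cleaner packaging of the same mechanism is to observe that the differential graded Lie algebra $\bigl(A^{0,\bullet}(X,T^{1,0}X),\dbar,[\cdot,\cdot]\bigr)$ governing the problem is formal---indeed homotopy abelian---as a consequence of the $\partial\bar\partial$-lemma; for a homotopy abelian DGLA the associated deformation functor is automatically unobstructed with smooth moduli and tangent space the first cohomology, recovering the theorem. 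In either formulation the only genuinely analytic difficulty that remains is the convergence estimate.
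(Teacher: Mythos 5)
The paper does not prove Theorem~\ref{thm: BTT} at all: it is stated as a quoted result with citations to Bogomolov, Tian and Todorov, so there is no in-paper argument to compare against. Your proof is correct, and it is precisely the classical Tian--Todorov unobstructedness argument from those references: formal order-by-order solution of the Maurer--Cartan equation, the Tian--Todorov identity showing $[\phi_i,\phi_j]\lrcorner\Omega$ is $\partial$-exact under the normalization $\partial(\phi_i\lrcorner\Omega)=0$, the $\partial\bar\partial$-lemma to convert this into $\bar\partial$-exactness, the choice $\phi_k\lrcorner\Omega=-\partial\gamma$ to propagate the normalization, and convergence via elliptic estimates. Two cosmetic remarks only: the bracket on $A^{0,\bullet}(X,T^{1,0}X)$ is more commonly called the (extension of the) Lie or Fr\"olicher--Nijenhuis bracket than the Schouten bracket, and your use of the Ricci-flat metric from Theorem~\ref{thm: Yau} (so that contraction with the parallel form $\Omega$ commutes with the Hodge theory) is exactly the device that also makes the Green-operator solution $\bar\partial^{*}G\Gamma_k$ automatically satisfy the normalization, so the two choices you mention for $\phi_k$ coincide in effect. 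The closing observation that the underlying DGLA is homotopy abelian is a legitimate modern repackaging (Goldman--Millson style) of the same mechanism.
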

 
What is perhaps surprising is that every deformation of a quintic threefold $X\subset \mathbb{P}^4$ is achieved by a quintic threefold.  This is in stark contrast to the case of $K3$-surfaces.  Recall that a $K3$ surface is a compact, complex surface with $K_{X}\sim \mathcal{O}_{X}$ and $\pi_1(X)=\{0\}$.  For example, a smooth quartic hypersurface in $\mathbb{P}^3$ is a $K3$ surface.  The Hodge diamond of a $K3$ surface is
 \[
\begin{array}{ccccccc}
\,&\,&\,&1&\,&\,&\, \\
\,&\,&0&\,&0&\,&\,\\
\,&1&\,&20&\,&1&\,\\
\,&\,&0&\,&0&\,&\,\\
\,&\,&\,&1&\,&\,&
\end{array}
\]
 For a quartic hypersurface this can be computed using the argument in Lemma~\ref{lem: HodgeDiamondQuintic}, or, for a general $K3$ surface, by using Noether's formula.  In particular, by Theorem~\ref{thm: BTT} we see that the moduli space of $K3$ surfaces is $20$ dimensional.  On the other hand, we can easily compute the dimension of the moduli space of quartic hypersurfaces in $\mathbb{P}^3$.  One sees that there are $35$ distinct homogeneous polynomials of degree $4$ in $4$ variables.  Accounting for scaling and the action of $PLG(4,\mathbb{C})$ yields a  $35-1-15 = 19$ dimensional space parametrizing distinct quartic hypersurfaces.  In particular, we see that the space of quartic hypersurface deformations is codimension $1$ in the space of Calabi-Yau deformations. In fact, by the Torelli theorem \cite{HuybrechtsK3}, a quartic hypersurface will have deformations that are not even projective.  This observation will serve as important motivation in our consideration of Reid's fantasy in Section~\ref{sec: web}
 
It is easy to see that the moduli space of smooth quintic threefolds is not compact.  To illustrate some of the possible behaviors that can occur, consider the Dwork family
 \begin{equation}\label{eq: DworkFam}
 X_{\psi}:= \left\{\sum_{i=0}^{4}Z_i^5 - 5\psi \prod_{i=0}^{4}Z_i=0\right\} \subset \mathbb{P}^4
 \end{equation}
 where we take $\psi \in \mathbb{C}$,  but we can extend this to a family over $\mathbb{P}^1$ by setting 
 \[
 X_{\infty} =\left\{ \prod_{i=0}^{4}Z_i=0\right\} \subset \mathbb{P}^4.
 \]
The variety $X_{\infty}$ is a union of hyperplanes, and is therefore reducible and singular in complex codimension $1$.  Our interest will be in the mildly singular variety $X_1$.  The following lemma describes the singularities of $X_1$.  We leave the proof as an exercise for the reader.

\begin{lem}
Let $\xi= e^{\frac{2\pi i}{5}}$ be a primitive $5$-th root of unity.  Then for $\psi \ne \infty$ we have
\begin{itemize}
\item[(i)] If $\psi^5 \ne 1$ then $X_{\psi}$ is smooth.
\item[(ii)] The varieties $X_{\xi^k}$, $k=0,\ldots, 4$ have $125$ singular points at ${[\xi^{a_0}:\xi^{a_1}:\ldots: \xi^{a_4}]}$ for $a_i \in \mathbb{Z}_5$ and $\sum_{i=0}^4 a_i=0 \in \mathbb{Z}_5$. 
\item[(iii)] If $p\in X_1$ is a singular point, then there is a neighborhood $p\in U \subset \mathbb{P}^4$, and local holomorphic coordinates $(z_1,\ldots, z_4)$ on $U$ such that
\[
X_1 \cap U  = \{\sum_{i=1}^{4}z_i^2=0\} \cap \{\|z\| <1\} \subset \mathbb{C}^4
\]
\end{itemize}
\end{lem}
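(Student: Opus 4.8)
The plan is to handle (i) and (ii) simultaneously through a direct study of the critical points of the defining polynomial, and then to establish the local normal form (iii) by a Hessian computation followed by the holomorphic Morse lemma. Write $P_\psi = \sum_{i=0}^4 Z_i^5 - 5\psi\prod_{i=0}^4 Z_i$. Since $P_\psi$ is homogeneous of degree $5$, Euler's identity $\sum_i Z_i\,\partial_{Z_i}P_\psi = 5P_\psi$ shows that a point is singular on $X_\psi$ exactly when all the partial derivatives $\partial_{Z_j}P_\psi = 5Z_j^4 - 5\psi\prod_{i\neq j}Z_i$ vanish, the equation $P_\psi=0$ then being automatic.

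First I would show that, for $\psi\neq 0$, a singular point has all coordinates nonzero. If some $Z_{j_0}=0$, then $\partial_{Z_{j_0}}P_\psi = -5\psi\prod_{i\neq j_0}Z_i=0$ forces a second coordinate $Z_{j_1}=0$; but then for every remaining index $m$ the product $\prod_{i\neq m}Z_i$ still contains a vanishing factor, so $\partial_{Z_m}P_\psi = 5Z_m^4=0$ and $Z_m=0$ as well, making all five coordinates zero, which is impossible in $\mathbb{P}^4$. (The case $\psi=0$ is the Fermat quintic, which is visibly smooth.) With all $Z_j\neq 0$, multiplying $\partial_{Z_j}P_\psi=0$ by $Z_j$ gives $Z_j^5 = \psi\Pi$ for each $j$, where $\Pi=\prod_i Z_i$; taking the product over $j$ yields $\Pi^5=\psi^5\Pi^5$, hence $\psi^5=1$. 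This is the contrapositive of (i). Conversely, when $\psi^5=1$ the relations $Z_j^5=\psi\Pi$ say all $Z_j$ share a common fifth power, so after rescaling a singular point is $[\xi^{a_0}:\cdots:\xi^{a_4}]$ with $a_j\in\mathbb{Z}_5$; reinserting this fixes the residue of $\sum_j a_j$ modulo $5$ (equal to $0$ for $\psi=1$). The tuples obeying this single linear condition number $5^4$, and the overall shift $a_j\mapsto a_j+c$---which is precisely rescaling by $\xi^c$---acts freely, so there are $5^4/5=125$ singular points, giving (ii).

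For (iii) I would first note that $X_\psi$ is invariant under the permutations $S_5$ of the $Z_i$ together with the diagonal torus $\{(\xi^{b_0},\ldots,\xi^{b_4}):\sum_j b_j\equiv 0\}$, and that this symmetry group acts transitively on the $125$ nodes; hence it suffices to analyze one node, say $[1:1:1:1:1]$ on $X_1$. Dehomogenizing by $Z_0=1$ and centering the affine coordinates at the point, the linear part of the Taylor expansion vanishes because the point is critical, so the leading term is the quadratic form given by the Hessian. A short calculation gives diagonal Hessian entries $20$ and off-diagonal entries $-5$, i.e.\ the matrix $25I-5J$ with $J$ the $4\times 4$ all-ones matrix; its eigenvalues are $5$ (once) and $25$ (three times), so the Hessian is nondegenerate and the singularity is an ordinary double point.

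The proof concludes by invoking the holomorphic Morse lemma: a holomorphic function with vanishing gradient and nondegenerate Hessian at a point admits local holomorphic coordinates $(z_1,\ldots,z_4)$ in which it equals $\sum_{i=1}^4 z_i^2$, so $X_1\cap U=\{\sum_i z_i^2=0\}$ as claimed. I expect the main obstacles to be making the coordinate-vanishing case analysis of the first step fully airtight and confirming the nondegeneracy of the Hessian; the transitivity of the symmetry action and the appeal to the Morse lemma are then routine, the latter being a standard complex-analytic fact.
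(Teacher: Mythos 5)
Your proof is correct. There is, however, nothing in the paper to compare it against: the paper explicitly leaves this lemma as an exercise for the reader, so your argument fills a genuine gap rather than paralleling an existing proof. The route you take is the standard one and all the computations check out: Euler's identity reduces singularity of $X_\psi$ to the vanishing of all partials; the case analysis showing a singular point has no zero coordinate when $\psi \neq 0$ is airtight (once one coordinate vanishes, every other partial degenerates to $5Z_m^4=0$); the product trick $Z_j^5 = \psi\Pi$ gives $\psi^5=1$; the count $5^4/5 = 125$ correctly accounts for the free action of the overall shift $a_j \mapsto a_j+c$, which is exactly projective rescaling by $\xi^c$; the diagonal-torus symmetry does act transitively on the nodes of $X_1$ (indeed the permutations are not even needed); and the Hessian at $[1:\cdots:1]$ is $25I-5J$ with eigenvalues $5$ and $25$ (multiplicity $3$), so the holomorphic Morse lemma applies and yields (iii).

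One useful by-product of your computation is that it exposes a minor imprecision in the paper's statement of (ii). Reinserting $Z_j=\xi^{a_j}$ into $\partial_{Z_j}P_\psi=0$ gives $1=\psi\,\xi^{s}$ with $s=\sum_i a_i$, i.e. $s\equiv -k \pmod 5$ when $\psi=\xi^k$. The condition $\sum_i a_i = 0$ printed in the lemma is therefore correct only for $k=0$ (the variety $X_1$, which is the case used in (iii) and in the rest of the paper); for $k\neq 0$ the singular points of $X_{\xi^k}$ satisfy $\sum_i a_i \equiv -k$, as one can confirm directly: $[\xi^4:1:1:1:1]$ is a node of $X_\xi$, while $[1:1:1:1:1]$ does not even lie on $X_\xi$. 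Your phrasing --- that the residue of $\sum_j a_j$ is fixed by $\psi$ and equals $0$ for $\psi=1$ --- is the accurate version, and the count of $125$ nodes holds for every $k$.
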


\begin{defn}
An ordinary double point is a singular point which is locally analytically isomorphic to a neighborhood of the origin in the affine variety
 \begin{equation}\label{eq: conifold}
V_0:= \left\{\sum_{i=1}^{4}z_i^2=0\right\}\subset \mathbb{C}^4.
 \end{equation}
We will also refer to such points as conifold points, or nodes.  We will refer to the affine variety in ~\eqref{eq: conifold} as the conifold.
\end{defn}

While the example of the Dwork family yields a singular quintic with $125$ nodal points, this is clearly not the generic behavior.  In fact, we have

\begin{exercise}\label{exercise: ODP}
If $X$ is a generic singular quintic, then $X$ has one ODP singularity.
\end{exercise}

\section{The Geometry of the conifold}\label{sec: conifold}

The conifold ~\eqref{eq: conifold} is a singular Calabi-Yau threefold.  We shall exhibit an explicit, non-vanishing holomorphic $(3,0)$ form on $V_0:= \left\{\sum_{i=1}^{4}z_i^2=0\right\}\subset \mathbb{C}^4$.  It is a general phenomenon that hypersurface singularities (or complete intersection singularities) admit non-vanishing holomorphic volume forms.  In the language of algebraic geometry, such singularities are said to be Gorenstein.  Explicitly, if $\{F=0\} \subset \mathbb{C}^n$ is a reduced hypersurface, the holomorphic volume form can be described as
\[
\Omega = {\rm Res}_{\{F=0\}} \frac{dz_1 \wedge \cdots \wedge dz_n}{F}
\]
Alternatively, in the set $\{\frac{\del F}{\del z_n} \ne 0\}$, define
\begin{equation}\label{eq: hypersurfaceVolForm}
\Omega= \frac{dz_1\wedge\cdots\wedge dz_{n-1}}{\frac{\del F}{\del z_n}}
\end{equation}
This formula extends in other coordinate charts (multiplying by appropriate powers of $-1$) to a global, non-vanishing holomorphic volume form.

Conifold transitions arise from the observation that ordinary double point singularity can be ``smoothed" in two topologically distinct ways.

\subsection{Smoothing the conifold by small resolution}

By a change of variables we may rewrite the conifold as the affine variety
\[
V_0:= \{xy-zw=0\} \subset \mathbb{C}^4.
\]
We blow-up along the line $\{x=z=0\}$.  Let $[U_1:U_2]$ be coordinates on $\mathbb{P}^1$, and take the closure of the graph of $\{xy=zw\}$ in $\mathbb{P}^1\times \mathbb{C}^4$ subject to the constraints $U_1z=U_2x$,
\[
\widehat{V} := \left\{ ([U_1:U_2], x,y,z,w) \in \mathbb{P}^1\times \mathbb{C}^4 : U_1z=U_2x, xy=zw \right\} \rightarrow V_0.
\]
We claim that $\widehat{V}$ is smooth.  Consider the set $\{U_2 = 1\} \subset \mathbb{P}^1$.  Over this set we can write
\[
(x,z) = z(U_1,1) \quad (w,y) = y(U_1,1).
\]
and so $\widehat{V} \cap \{U_2 =1 \} \sim \mathbb{C}^3$.  Computing similarly on $\{U_1=1\}$ shows that $\widehat{V}$ is smooth and furthermore yields a global identification $\widehat{V} = \mathcal{O}_{\mathbb{P}^1}(-1)^{\oplus 2}$, along with a map $\pi:\widehat{V} \rightarrow V_0$. 
Explicitly, this map can be given as follows.  Write 
\begin{equation}\label{eq: resolvedConifoldCoords}
\mathcal{O}_{\mathbb{P}^1}(-1)^{\oplus 2} \ni p = ([U_1:U_2], W_1, W_2)
\end{equation}
The expressions $U_iW_j.$ for $i,j =1,2$ are well-defined holomorphic functions, and hence define a map 
\begin{equation}\label{eq: resoConProj}
\begin{aligned}
\mathcal{O}_{\mathbb{P}^1}(-1)^{\oplus 2} &\rightarrow \mathbb{C}^4\\
([U_1:U_2], W_1, W_2 &\mapsto (x,y,z,w)= (U_1W_1, U_2W_2, U_1W_2,U_2W_1) \in V_0
\end{aligned}
\end{equation}
This map is an isomorphism away from $\mathbb{P}^1$ thought of as the zero section in the bundle $\mathcal{O}_{\mathbb{P}^1}(-1)^{\oplus 2}$, and the map takes $\mathbb{P}^1 \mapsto \{0\}\in \mathbb{C}^4$.  This is an example of a {\em small resolution}.  Since $\pi: \widehat{V} \rightarrow V_0$ is an isomorphism in codimension $2$, Hartog's theorem yields the following

\begin{lem}
The resolved conifold $\widehat{V} = \mathcal{O}_{\mathbb{P}^1}(-1)^{\oplus 2} $ has $K_{\widehat{V}}\sim \mathcal{O}_{\widehat{V}}$.
\end{lem}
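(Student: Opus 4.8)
The plan is to produce a global, nowhere-vanishing holomorphic $(3,0)$-form on $\widehat{V}$, since such a form gives a trivialization of the canonical bundle and hence $K_{\widehat{V}}\sim\mathcal{O}_{\widehat{V}}$. First I would recall that the conifold $V_0=\{xy-zw=0\}$ is Gorenstein, so by the residue/adjunction recipe~\eqref{eq: hypersurfaceVolForm} it carries a nowhere-vanishing holomorphic volume form on its smooth locus $V_0\setminus\{0\}$. Explicitly, with $F=xy-zw$ one has $\partial F/\partial y=x$, so $\Omega=x^{-1}\,dx\wedge dz\wedge dw$ on $\{x\ne 0\}$, and the analogous expressions in the other charts glue (up to powers of $-1$) to a nowhere-vanishing holomorphic $3$-form on all of $V_0\setminus\{0\}$.

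Next I would transport $\Omega$ to the resolution. By~\eqref{eq: resoConProj} the map $\pi:\widehat{V}\to V_0$ restricts to a biholomorphism $\widehat{V}\setminus E \xrightarrow{\ \sim\ } V_0\setminus\{0\}$, where $E\cong\mathbb{P}^1$ is the zero section contracted to the node. Pulling back, $\pi^*\Omega$ is a nowhere-vanishing holomorphic section of $K_{\widehat{V}}$ over $\widehat{V}\setminus E$. Since $\widehat{V}$ is a smooth complex $3$-fold and $E$ is a curve, the exceptional locus $E$ has codimension $2$; Hartogs' extension theorem then extends $\pi^*\Omega$ to a global holomorphic section $\widetilde{\Omega}$ of $K_{\widehat{V}}$.

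The hard part—really the only point requiring care—is checking that the extended form $\widetilde{\Omega}$ is nowhere vanishing, since a priori it could acquire zeros along $E$. I would settle this by a codimension argument: the zero locus of a holomorphic section of a line bundle is either empty or an effective divisor, hence of pure codimension $1$. As $\widetilde{\Omega}$ is already nowhere zero off $E$, its zero locus is contained in $E$, which has codimension $2$; a codimension-$1$ subset contained in a codimension-$2$ subset must be empty. Therefore $\widetilde{\Omega}$ is nowhere vanishing and trivializes $K_{\widehat{V}}$.

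As a concrete alternative that bypasses the non-vanishing subtlety entirely, one can compute $\pi^*\Omega$ directly in the charts used to prove smoothness of $\widehat{V}$. On $\{U_2=1\}$ with coordinates $(U_1,z,y)$ and $x=U_1 z$, $w=U_1 y$, substituting into $\Omega=x^{-1}\,dx\wedge dz\wedge dw$ and simplifying yields $\pi^*\Omega=dU_1\wedge dz\wedge dy$, which is manifestly nowhere vanishing; the symmetric computation on $\{U_1=1\}$ then shows global non-vanishing. I expect the Hartogs/codimension step to be the conceptual crux, with this chart computation serving as the routine verification.
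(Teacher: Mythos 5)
Your proposal is correct, and its primary argument takes a genuinely different route from the paper's proof. The paper proves the lemma by exactly what you present as the concrete alternative: it pulls the hypersurface volume form \eqref{eq: hypersurfaceVolForm} back through a chart of the parametrization \eqref{eq: resoConProj} (there, $\pi^*\left(dx\wedge dy\wedge dz/z\right)=dW_1\wedge dU_2\wedge dW_2$ on $\{U_1=1,\,W_2\neq 0\}$), observes that the result is holomorphic and nowhere zero, and repeats in the remaining charts; your computation $\pi^*\Omega=dU_1\wedge dz\wedge dy$ on $\{U_2=1\}$ is the same calculation in the blow-up coordinates, and it checks out. Your main argument---extend $\pi^*\Omega$ across $E\cong\mathbb{P}^1$ by Hartogs, then rule out zeros because the zero locus of a not-identically-zero holomorphic section of a line bundle is empty or of pure codimension one, whereas here it is contained in the codimension-two set $E$---is the argument the paper only gestures at in the sentence immediately preceding the lemma (``Since $\pi:\widehat{V}\rightarrow V_0$ is an isomorphism in codimension $2$, Hartog's theorem yields the following'') but never carries out; in particular the codimension-purity step, which you rightly isolate as the crux (Hartogs alone produces a section that could a priori vanish along $E$), appears nowhere in the paper. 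The trade-off is clear: your route is coordinate-free and applies verbatim to any resolution that is an isomorphism away from a set of codimension at least two on a Gorenstein variety with trivial canonical sheaf, while the paper's chart computation additionally yields the explicit trivializing $(3,0)$-form on $\widehat{V}$, which is what the later scaling arguments (for instance in the proof of necessity of Theorem~\ref{thm: Friedman}) actually use.
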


\begin{proof}
We can write the holomorphic volume form explicitly using~\eqref{eq: resoConProj}.  For example, consider $\pi:\{U_1=1, W_2 \ne 0\} \rightarrow \{z\ne 0\} \subset V_0$ and pull-back the holomorphic volume form~\eqref{eq: hypersurfaceVolForm}
\[
\pi^*\left(\frac{dx\wedge dy\wedge dz}{z}\right) = dW_1\wedge dU_2\wedge dW_2
\]
which is clearly non-vanishing and holomorphic.  Repeating this calculation in the remaining charts on $\widehat{V}$ yields the lemma.  
\end{proof}

We end this section by noting that $\widehat{V}$ admits a rescaling action along the fibers of $\mathcal{O}_{\mathbb{P}^1}(-1)^{\oplus 2}$.  It will be convenient for us to define the rescaling map
\[
\begin{aligned}
S_{a}: \mathcal{O}_{\mathbb{P}^1}(-1)^{\oplus 2}&\rightarrow \mathcal{O}_{\mathbb{P}^1}(-1)^{\oplus 2}\\
([U_1:U_2], W_1, W_2 &\mapsto ([U_1:U_2], a^{3/2}W_1, a^{3/2}W_2)
\end{aligned}
\]

\begin{rk}
The reader will note that, in the construction of the small resolution, we made a choice to blow-up along the line $\{x=z=0\}$.  One could equally have chosen to blow-up along the line $\{x=w=0\}$. These two choices yield distinct small resolutions which are connected by a birational map.  This famous example is called the Atiyah Flop.
\end{rk}

\begin{exercise}
    Let $\widehat{V}^+$ denote the blow up of $V_0$ along the ideal $\{x=z=0\}$, and let $\widehat{V}^-$ denote the blow-up along $\{x=w=0\}$.  Show that $\widehat{V}^+$, and $\widehat{V}^-$ are birational, but not biholomorphic.
\end{exercise}

\subsection{Smoothing the conifold by deformation}

We examine a different approach to smoothing the conifold singularity.  Consider the map
\[
f:\mathbb{C}^4 \rightarrow \mathbb{C}
\]
defined by $f(z) = \sum_{i=1}^{4}z_i^2$.  This defines a family $\mathcal{V}\subset \mathbb{C}^4\times \mathbb{C} \rightarrow \mathbb{C}$ whose fiber over $t\in \mathbb{C}$ is 
\[
V_t = \{\sum_{i=1}^{4}z_i^2=t \} \subset \mathbb{C}^4.
\]
One can easily check that $V_t$ is smooth for $t\ne 0$.  The family $\mathcal{V}$ admits a rescaling action.  For $\lambda \in \mathbb{C}^*$, fix a choice of $\lambda^{1/2}$.  The particular choice will be irrelevant for our applications.  Consider the map
\[
S_{\lambda}(z) = (\lambda^{3/2}z_1, \lambda^{3/2}z_2, \ldots, \lambda^{3/2}z_4).
\]
The reason for making the admittedly odd choice of exponent $3/2$ will become apparent later when we discuss the metric geometry of the deformation family.  For now, we observe that
\begin{equation}\label{eq: rescaling}
S_{t^{1/3}}: V_1 \rightarrow V_{t}
\end{equation}
The map $S_{t^{1/3}}$ allows us to move between non-zero fibers of the smoothing family.  It turns out we can also identify $V_0$ with $V_t$ (at least away from the singular point) in a particularly convenient way. Consider the following ``nearest point projection" map
\begin{equation}\label{eq: PhitMap}
\Phi_t(z) = z + \frac{\bar{z}t}{2\|z\|^2}.
\end{equation}
Suppose $z \in V_0$.  Then we have
\[
\begin{aligned}
\Phi_t(z)\cdot \Phi_t(z) &= z\cdot z + t + t^2 \frac{\overline{z\cdot z}}{4\|z\|^4}= t
\end{aligned}
\]
and so $\Phi_t : V_0 \rightarrow V_t$.  We claim that this map defines a diffeomorphism
\[
\Phi_t(z) : V_0\cap \left\{\|z\|^2 \geq \frac{t}{2}\right\}\rightarrow V_t \setminus\{\|z\|^2=t\}
\]
We only need to show that $\Phi_t$ is injective.  We compute
\begin{equation}\label{eq: PhiMapNorm}
\begin{aligned}
\|\Phi_t(z)\|^2 &= \|z\|^2 + 2{\rm Re}\left( t\frac{z\cdot z}{2 \|z\|^2}\right) + \frac{|t|^2}{4\|z\|^2}\\
&= \|z\|^2 + \frac{|t|^2}{4\|z\|^2}
\end{aligned}
\end{equation}
The function $g(x) = x+ \frac{|t|^2}{4x}$ is strictly increasing provided $x > \frac{|t|}{2}$. Thus, if $z_1, z_2 \in V_0\cap\{ \|z\|^2 > \frac{|t|}{2}\}$ and $\Phi_t(z_1)=\Phi_t(z_2)$, then we also have $\|z_1\|=\|z_2\|$, and then ~\eqref{eq: PhitMap} implies that $z_1=z_2$.  Furthermore, one can check that
\[
\Phi_t= S_{t^{1/3}}\circ \Phi_1 \circ S_{t^{-1/3}}.
\]

The following lemma describes $V_t$ as a smooth manifold.

\begin{lem}\label{lem: defRealCoords}
For $t\ne 0$ we have $V_t \sim TS^3$.  Furthermore, for any $\epsilon \geq 0$ we have
\[
V_t \cap \{\|z\|^2=t+2\epsilon^2\} \sim S^3\times S_{\epsilon}^2
\]
where $S^2_{\epsilon} = \{ |y| = \epsilon\} \subset \mathbb{R}^3$
\end{lem}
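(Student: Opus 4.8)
The plan is to pass to real coordinates, recognize the defining equations of $V_t$ as the standard description of the total space of $TS^3$, and then identify the norm slices as sphere subbundles, which trivialize because $S^3$ is a Lie group. Write $z = x + \sqrt{-1}\,y$ with $x,y \in \mathbb{R}^4$, so that $\sum_i z_i^2 = \|x\|^2 - \|y\|^2 + 2\sqrt{-1}\,\langle x,y\rangle$. For $t$ real and positive the condition $z \in V_t$ is then equivalent to the pair of real equations
\[
\|x\|^2 - \|y\|^2 = t, \qquad \langle x,y\rangle = 0 .
\]
For the first assertion with general $t \neq 0$ I would reduce to this case using the biholomorphism $S_{t^{1/3}}\colon V_1 \to V_t$ from~\eqref{eq: rescaling}, which is in particular a diffeomorphism, so that $V_t \sim V_1$.

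Granting the two real equations, note that $\|x\|^2 = t + \|y\|^2 \geq t > 0$, so $x \neq 0$ and $u := x/\|x\| \in S^3$; the second equation says exactly that $y \in T_u S^3 = \{v \in \mathbb{R}^4 : \langle u,v\rangle = 0\}$. I would then verify that
\[
(x,y) \longmapsto \big(x/\|x\|,\, y\big)
\]
is a diffeomorphism from $V_t$ onto $TS^3 = \{(u,y) : u \in S^3,\ \langle u,y\rangle = 0\}$, the inverse being the manifestly smooth map $(u,y) \mapsto (\sqrt{t+\|y\|^2}\,u,\, y)$. This proves $V_t \sim TS^3$.

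For the second assertion I take $t > 0$ real and intersect with $\{\|z\|^2 = t + 2\epsilon^2\}$, i.e.\ $\|x\|^2 + \|y\|^2 = t + 2\epsilon^2$. Solving the resulting linear system for $\|x\|^2$ and $\|y\|^2$ together with the two equations above gives $\|x\|^2 = t + \epsilon^2$ and $\|y\|^2 = \epsilon^2$. Hence the slice is precisely the radius-$\epsilon$ sphere bundle $\{(u,y) : u \in S^3,\ y \perp u,\ \|y\| = \epsilon\}$ inside $TS^3$, degenerating to the zero section $S^3$ when $\epsilon = 0$.

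The key — and only genuinely topological — step is to trivialize this sphere bundle, and here I would use quaternions. Identify $\mathbb{R}^4 = \mathbb{H}$, so that $S^3$ is the group of unit quaternions. Since $\langle u, u v\rangle = {\rm Re}(\bar u\, u\, v) = {\rm Re}(v)$ vanishes exactly when $v \in {\rm Im}\,\mathbb{H} = \mathbb{R}^3$, and quaternion multiplication preserves norms, left translation $(u,v) \mapsto (u,\, u\cdot v)$ is a global trivialization $S^3 \times \mathbb{R}^3 \to TS^3$. Restricting to $\|v\| = \epsilon$ then yields the desired diffeomorphism, explicitly $z = \sqrt{t+\epsilon^2}\,u + \sqrt{-1}\,u\cdot v$ for $(u,v) \in S^3 \times S^2_\epsilon$. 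The main obstacle is exactly this last point: a priori the slice is only an $S^2$-bundle over $S^3$, and the product structure $S^3 \times S^2_\epsilon$ depends on the parallelizability of $S^3$, which the quaternionic multiplication supplies cleanly.
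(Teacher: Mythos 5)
Your proof is correct and follows the same core route as the paper: pass to real coordinates $z = x + \sqrt{-1}\,y$, obtain the equations $\langle x,y\rangle = 0$, $\|x\|^2 = \|y\|^2 + t$, and read off the identification with $TS^3$ via $u = x/\|x\|$. Where you genuinely go beyond the paper is the last step. The paper's proof stops after deriving the slice equations $\langle x,y\rangle = 0$, $\|x\|^2 = \|y\|^2 + t$, $\|y\|^2 = \epsilon^2$, which on their face only exhibit the slice as the radius-$\epsilon$ sphere subbundle of $TS^3$; the product structure $S^3 \times S^2_\epsilon$ asserted in the statement is left implicit, since it requires trivializing that bundle. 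Your quaternionic argument --- identifying $\mathbb{R}^4 = \mathbb{H}$ and using the norm-preserving left translation $(u,v) \mapsto (u, u\cdot v)$, $v \in \mathrm{Im}\,\mathbb{H}$, to trivialize $TS^3$ --- supplies exactly this missing step, and even yields the explicit parametrization $z = \sqrt{t+\epsilon^2}\,u + \sqrt{-1}\,u\cdot v$. Two further minor points in your favor: your reduction of general $t \neq 0$ to $t>0$ via the biholomorphism $S_{t^{1/3}}$ of~\eqref{eq: rescaling} is a clean substitute for the paper's appeal to a ``rotation,'' and your map $(x,y) \mapsto (x/\|x\|, y)$ with explicit smooth inverse $(u,y)\mapsto(\sqrt{t+\|y\|^2}\,u, y)$ avoids the paper's apparent typo $v_i = y_i|y|$ in the definition of the fiber coordinate.
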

\begin{proof}
We will construct a diffeomorphism explicitly.  For simplicity, let $t \in \mathbb{R}>0$.  The general case can be obtained from this special case by a rotation.  Write
\[
z_i = x_i + \sqrt{-1}y_i \quad i=1,\ldots, 4
\]
In terms of the real coordinates $V_t$ is given by the equations
\begin{equation}\label{eq: smoothedConifoldReal}
\sum_{i=1}^{4}x_iy_i =0 \quad \sum_{i=1}^{4} x_i^2 = \sum_{i=1}^4 y_i^2 +t.
\end{equation}
In particular, on $V_t$ for $t\ne 0$ we have $|x|^2 = \sum_{i=1}^{4} x_i^2  \geq t >0$.  Define
\[
u_i = \frac{x_i}{|x|}, \quad v_i =y_i|y|
\]
Then $u= (u_1,\ldots, u_4) \in \mathbb{R}^4$ satisfy $|u|^2=1$, while $v \in \mathbb{R}^4$ satisfies $u\cdot v=0$.  This is clearly $TS^3$. 

 Next we consider the intersection of $V_t$ with $\|z\|^2 = t+2\epsilon^2$.  By~\eqref{eq: smoothedConifoldReal} this yields the equations
\begin{equation}\label{eq: smoothedConifoldRealLink}
\vec{x}\cdot \vec{y} =0 \quad |x|^2= |y|^2 +t, \quad |y|^2=\epsilon^2.
\end{equation}
\end{proof}

The $3$-sphere $V_t \cap \{\|z\|^2=t\}$ is a vanishing cycle for the degeneration $V_t \rightarrow V_0$, as can be easily seen from the description of the degeneration in Lemma~\ref{lem: defRealCoords}.  This $3$-sphere turns out to play a critical role in understanding the smoothing of nodal Calabi-Yau $3$-folds, as we shall see later.  Recall the following definition due to Harvey-Lawson \cite{HarveyLawson}

\begin{defn}\label{defn: slag}
Suppose $(X,\omega,\Omega)$ is a K\"ahler Calabi-Yau manifold with $\dim_{\mathbb{C}}X=n$.  A real submanifold $L\subset X$ with $\dim_{\mathbb{R}}L=n$ is:
\begin{itemize}
\item[(i)] Lagrangian if $\omega\big|_{L}=0$.
\item[(ii)] Special Lagrangian (sLag) if there exists $e^{\sqrt{-1}\theta} \in \mathbb{S}^1$ such that
\[
{\rm Im}\left( e^{-\sqrt{-1}\theta}\Omega|_{L}\right) =0
\]
\end{itemize}
\end{defn}

If we assume in addition that $\omega = \omega_{CY}$ is a Calabi-Yau metric satisfying the complex Monge-Amp\`ere equation~\eqref{eq: CMA}, then special Lagrangians are a special class of {\em calibrated submanifolds}.  By the theory of calibrations developed by Harvey-Lawson \cite{HarveyLawson} such manifolds are automatically volume minimizing in their homology class. 

\begin{lem}[Harvey-Lawson \cite{HarveyLawson}]
Suppose $(X,\omega,\Omega)$ is a K\"ahler Calabi-Yau manifold, and $L\subset X$ is a compact special Lagrangian, then $L$ is volume minimizing in its homology class.  Furthermore, we have
\[
{\rm Vol}(L) = \int_{L}{\rm Re}\left(e^{-\sqrt{-1}\theta}\Omega|_{L}\right).
\]
\end{lem}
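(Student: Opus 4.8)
The plan is to recognize this as the fundamental theorem of calibrated geometry, applied to the special Lagrangian calibration. I would set $\phi := \mathrm{Re}\!\left(e^{-\sqrt{-1}\theta}\Omega\right)$ and show that $\phi$ is a \emph{calibration of comass one} which calibrates $L$. First I would check that $\phi$ is closed: since $\Omega$ is a holomorphic $(n,0)$-form, $\bar\partial\Omega=0$ and $\partial\Omega=0$ for bidegree reasons, so $d\Omega=0$ and hence $d\phi=0$. The normalization matters here, and this is exactly where the hypothesis $\omega=\omega_{CY}$ enters: I would use \eqref{eq: CMA} to fix the scale of $\Omega$ so that $|\Omega|\equiv 1$ pointwise, equivalently so that in a suitable unitary coframe at each point $\omega_{CY}$ becomes the standard K\"ahler form and $\Omega = dz_1\wedge\cdots\wedge dz_n$ simultaneously. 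This is precisely what \eqref{eq: CMA} asserts pointwise, and it is what makes $\phi$ a unit-comass calibration rather than merely a closed form.

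The analytic heart is the pointwise inequality $\phi|_V \le \mathrm{vol}_V$ for every oriented real $n$-plane $V\subset T_pX$, with equality exactly when $V$ is a special Lagrangian plane of phase $\theta$. I would prove this by linear algebra on $\mathbb{C}^n$. Choose a real orthonormal basis $e_1,\dots,e_n$ of $V$ and form the complex matrix $A=[e_1|\cdots|e_n]$ recording their coordinates in the unitary frame, so that $\Omega(e_1,\dots,e_n)=\det_{\mathbb{C}}A$ while $\mathrm{vol}_V(e_1,\dots,e_n)=1$. Real orthonormality says $\mathrm{Re}(A^*A)=I$, hence $H:=A^*A=I+\sqrt{-1}B$ with $B$ real and antisymmetric; since $H$ is positive semidefinite its eigenvalues are $1\pm\beta_k\ge 0$, so
\[
|\det A|^2=\det H=\prod_k\left(1-\beta_k^2\right)\le 1,
\]
with equality iff $B=0$, i.e. iff $\omega|_V=0$ (the Lagrangian condition). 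Therefore $\phi(e_1,\dots,e_n)=\mathrm{Re}\!\left(e^{-\sqrt{-1}\theta}\det A\right)\le |\det A|\le 1$, and tracing both equalities shows they hold together precisely when $V$ is Lagrangian and $e^{-\sqrt{-1}\theta}\det A=1$, i.e. when $V$ is special Lagrangian of phase $\theta$ with compatible orientation.

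With the calibration established the conclusion is immediate. Because $L$ is special Lagrangian it is in particular Lagrangian ($\omega|_L=0$) and satisfies $\mathrm{Im}\!\left(e^{-\sqrt{-1}\theta}\Omega|_L\right)=0$, so by the equality analysis each tangent plane is calibrated and $\phi|_L=\mathrm{vol}_L$ as top-forms on $L$; integrating gives the stated identity $\mathrm{Vol}(L)=\int_L\mathrm{Re}\!\left(e^{-\sqrt{-1}\theta}\Omega|_L\right)$. For minimality, let $L'$ be any compact submanifold homologous to $L$, say $L-L'=\partial W$. Then
\[
\mathrm{Vol}(L)=\int_L\phi=\int_{L'}\phi\;\le\;\int_{L'}\mathrm{vol}_{L'}=\mathrm{Vol}(L'),
\]
where the middle equality is Stokes' theorem applied to the closed form $\phi$ over $W$, and the inequality is the pointwise comass bound. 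Hence $L$ minimizes volume in its homology class.

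I expect the main obstacle to be the pointwise comass computation together with pinning down the normalization. One must verify that the constant $c$ in \eqref{eq: CMA} is exactly the one forcing $|\Omega|\equiv 1$ in unitary frames; otherwise $\phi$ is only a constant multiple of a calibration and the clean volume formula fails. The equality analysis separating the Lagrangian condition ($B=0$) from the phase condition is routine once the identity $\mathrm{Re}(A^*A)=I$ is correctly set up, but it is the one place where the geometry of $\mathrm{SU}(n)$ genuinely enters.
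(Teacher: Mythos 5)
Your proposal is correct, and it is exactly the argument the paper relies on: the paper gives no proof of this lemma at all, simply citing Harvey--Lawson \cite{HarveyLawson}, and your comass-one computation (writing $A^*A = I + \sqrt{-1}B$ with $B$ real antisymmetric, so $|\det A|^2 = \prod_k (1-\beta_k^2) \le 1$ with equality iff $\omega|_V = 0$) followed by the Stokes/calibration argument is precisely the content of that citation, specialized to the calibration $\mathrm{Re}\left(e^{-\sqrt{-1}\theta}\Omega\right)$. Your closing caveat is also well taken: as stated the lemma leaves the scaling of $\Omega$ implicit, and the clean identity $\mathrm{Vol}(L) = \int_L \mathrm{Re}\left(e^{-\sqrt{-1}\theta}\Omega|_L\right)$ does require choosing the constant in \eqref{eq: CMA} so that $\|\Omega\|_{\omega} \equiv 1$, i.e.\ that $\omega$ is the Calabi--Yau metric normalized compatibly with $\Omega$; otherwise one only gets minimization up to an overall constant factor in the volume formula.
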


\begin{lem}\label{lem: sLagMod}
Let $L_{t} = \{ \|z\|^2 =t\} \subset V_t$ be the vanishing cycle of the degeneration $V_t\rightarrow V_0$. For $t \in \mathbb{C}^*$, write $t=|t|e^{\sqrt{-1}\theta}$.  Then
\[
{\rm Im}\left(e^{-\sqrt{-1}\theta}\Omega_t\right) =0,\quad \text{ and } \quad \int_{L_t} \Omega_t =2\pi^2t.
\]
\end{lem}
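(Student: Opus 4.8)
The plan is to reduce everything to the case of real $t>0$, where the vanishing cycle becomes a round $3$-sphere sitting in the real locus $\mathbb{R}^4\subset\mathbb{C}^4$, and then to recover the general case by tracking the phase $e^{\sqrt{-1}\theta}$ through the rescaling symmetry. Write $F=\sum_{i=1}^{4}z_i^2$; by~\eqref{eq: hypersurfaceVolForm} the holomorphic volume form on $V_t$ is $\Omega_t=\frac{dz_1\wedge dz_2\wedge dz_3}{2z_4}$ on the chart $\{z_4\neq 0\}$, with the analogous expressions (differing by a sign) on the charts $\{z_j\neq 0\}$. First I would introduce the rotation $R_\theta\colon w\mapsto z=e^{\sqrt{-1}\theta/2}w$, which satisfies $\sum z_i^2=e^{\sqrt{-1}\theta}\sum w_i^2$ and hence restricts to a biholomorphism $V_{|t|}\to V_t$ carrying the sphere $L_{|t|}=\{\|w\|^2=|t|\}\cap V_{|t|}$ onto $L_t$, since $\|z\|=\|w\|$. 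A direct substitution gives $R_\theta^*\Omega_t=e^{\sqrt{-1}\theta}\,\Omega_{|t|}$, because the numerator scales by $e^{3\sqrt{-1}\theta/2}$ while the denominator scales by $e^{\sqrt{-1}\theta/2}$. This single identity reduces both assertions to the case $t=|t|>0$.

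For real $t>0$ the special Lagrangian condition is immediate. Combining the real equations~\eqref{eq: smoothedConifoldReal} with the defining relation $\|z\|^2=\sum(x_i^2+y_i^2)=t$ of $L_t$ forces $\sum y_i^2=0$, hence $y=0$ and $\sum x_i^2=t$; thus $L_t$ is exactly the round sphere $S^3_{\sqrt{t}}=\{y=0,\ |x|^2=t\}$ inside the real locus. Along this slice $z_j=x_j$ and $dz_j|_{L_t}=dx_j$, so $\Omega_t|_{L_t}=\frac{dx_1\wedge dx_2\wedge dx_3}{2x_4}$ is manifestly a real $3$-form. Combining this with the identity $R_\theta^*(e^{-\sqrt{-1}\theta}\Omega_t)=\Omega_{|t|}$ shows that $e^{-\sqrt{-1}\theta}\Omega_t$ pulls back to a real form on $L_{|t|}$, which is exactly the claim ${\rm Im}(e^{-\sqrt{-1}\theta}\Omega_t)|_{L_t}=0$.

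It remains to evaluate the period. By the same pullback, $\int_{L_t}\Omega_t=e^{\sqrt{-1}\theta}\int_{L_{|t|}}\Omega_{|t|}$, so it suffices to compute $\int_{S^3_{\sqrt{s}}}\Omega_s$ for $s=|t|>0$. Here I would recognize the restricted form as a constant multiple of the round volume form: since $\sum x_i\,dx_i=0$ on the sphere, contracting the Euclidean volume form with the radial field shows that $\frac{dx_1\wedge dx_2\wedge dx_3}{2x_4}$ is proportional to $d{\rm vol}_{S^3_{\sqrt{s}}}$. Integrating over $S^3_{\sqrt{s}}$—equivalently, integrating $\frac{dx_1\,dx_2\,dx_3}{\sqrt{s-|x'|^2}}$ over the ball $|x'|^2<s$ after summing the contributions of the two hemispheres $x_4\gtrless 0$—yields a multiple of $s$. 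Reinstating the phase gives $\int_{L_t}\Omega_t=c\,t$, linear in $t$ as asserted, with the precise constant $2\pi^2$ determined by the chosen normalization of $\Omega_t$ and pinned down by the elementary evaluation $\int_0^{\pi/2}\sin^2\phi\,d\phi=\frac{\pi}{4}$.

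The final integral is routine; the only real obstacle is the bookkeeping in the reduction step. One must verify that $R_\theta$ is orientation preserving for the chosen orientation of the vanishing cycle, that the chart expressions for $\Omega_t$ are used consistently across $\{z_j\neq 0\}$ (the sign change under permuting indices compensating the orientation switch between the hemispheres $x_4\gtrless 0$), and that the phase $e^{\sqrt{-1}\theta}$ is tracked correctly through both the pullback of $\Omega_t$ and the identification $R_\theta(L_{|t|})=L_t$. Once these orientation and normalization matters are settled, the proportionality of the period to $t$ and the vanishing of the imaginary part of $e^{-\sqrt{-1}\theta}\Omega_t$ on $L_t$ both follow from the real-slice computation.
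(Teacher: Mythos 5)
Your proposal is correct and follows essentially the same route as the paper: both reduce to the real locus using the $\mathbb{C}^*$-scaling symmetry $z\mapsto \lambda z$ of the family, where the vanishing cycle becomes the round $S^3$ on which $\Omega$ restricts to a real multiple of the volume form; the paper merely packages your phase rotation and radius scaling into the single map $S_{t^{-1/3}}:L_t\to L_1$ and computes only at $t=1$. The one caveat is normalization: with $\Omega_t = \frac{dz_1\wedge dz_2\wedge dz_3}{2z_4}$ taken literally from \eqref{eq: hypersurfaceVolForm}, your computation gives $\int_{L_t}\Omega_t = \pi^2 t$, whereas the constant $2\pi^2 t$ in the statement presupposes the convention $\Omega_t = \frac{dz_1\wedge dz_2\wedge dz_3}{z_4}$ of \eqref{eq: pfFriedmanVolForm}, which is the normalization the paper's own proof implicitly uses.
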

\begin{proof}
We check this formula at $t=1$.  Consider the open set $\{z_4 \ne 0\} \subset V_1$.  Working in the real coordinates introduced in Lemma~\ref{lem: defRealCoords}, $L_1 = \{|y|=0\}$, and so the holomorphic volume form satisfies
\[
\Omega_{1}|_{L_1} = \frac{dx_1\wedge dx_2 \wedge dx_3}{x_4}
\]
We use this non-vanishing form to define an orientation on $S^3$.  Then, over $S^3 \subset \mathbb{R}^4$ yields the result.  For general $t$ the result follows from the rescaling action described in~\eqref{eq: rescaling} since
\[
S_{t^{-1/3}}:L_t \rightarrow L_1, \qquad S_{t^{-1/3}}^*\Omega_1 = t\Omega_t
\]
\end{proof}

If we use the flat metric on $\mathbb{R}^8$ to identify $TS^3 \sim T^*S^3$, then $L_t\subset V_t$ is special Lagrangian in the sense of Definition~\ref{defn: slag}, however this symplectic structure is not Calabi-Yau and so $L_t$ is not minimal.  Later we will see that $V_t$ can be equipped with a Calabi-Yau structure such that $L_t$ is special Lagrangian and volume minimizing.  

\subsection{The local conifold transition}

We can now describe the local model of a conifold transition.  We consider the process
\[
\widehat{V} \rightarrow V_0 \leadsto V_t
\]
where $\widehat{V} \rightarrow V_0$ contracts $\mathbb{P}^1 \subset \widehat{V}$, followed by the deformation $V_0 \leadsto V_t$ smoothing the resulting ODP singularity.  This process allows us to pass between the topologically distinct Calabi-Yau manifolds $\widehat{V}$ and $V_{t}, t\ne 0$. 

\section{Global conifold transitions}\label{sec: GlobalConifoldTrans}

Suppose now that we have a compact complex space $X_0$ of complex dimension $3$ with only ODP singularities, and such that $K_{X_0}\sim \mathcal{O}_{X_0}$ (that is, $X_0$ is Gorenstein, with trivial canonical bundle).  From the local model it is not hard to see that one can construct a small resolution
\[
\pi:\widehat{X} \rightarrow X_0
\]
and $\widehat{X}$ is a compact, complex manifold with $K_{\widehat{X}}\sim \mathcal{O}_{\widehat{X}}$.  One can then ask whether it is possible to find a deformation family $\mathcal{X} \rightarrow \Delta = \{ t\in \mathbb{C} : |t|<1\}$ such that $X_0$ is the fiber over $0$, and the fiber $X_t, t\ne 0$ is a smooth, compact complex manifold with $K_{X_t}\sim \mathcal{O}_{X_t}$.  This is the content of a famous result of Friedman \cite{Fri86}.

\begin{thm}[Friedman \cite{Fri86}]\label{thm: Friedman}
Let $X_0$ be a compact Calabi-Yau threefold with ODP singularities.  Let $\pi:\widehat{X} \rightarrow X_0$ be a small resolution, and let $C_i$, $1 \leq i \leq k$ be the $(-1,-1)$ curves contracted by $\pi$.  Then $X_0$ admits a first-order smoothing $X_0\leadsto X_t$ if and only if there exists $\lambda_i \in \mathbb{C}^*$ for $1 \leq i \leq k$ such that
\begin{equation}\label{eq: FriedmanRelation}
\sum_{i=1}^{k}\lambda_i[C_i] =0 \in H_2(\widehat{X},\mathbb{C})
\end{equation}
\end{thm}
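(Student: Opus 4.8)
I will prove only the necessity direction (existence of a smoothing $\Rightarrow$ the relation~\eqref{eq: FriedmanRelation}), which is the part amenable to the special Lagrangian geometry developed above. The plan is to reduce the homological statement to a pairing identity and then establish that identity by a single application of Stokes' theorem to the $5$-form $\alpha\wedge\Omega_t$, where $\alpha$ is a test $2$-form and $\Omega_t$ is the holomorphic volume form on a smoothing.

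First I would set up the common geometry. Write $p_1,\dots,p_k$ for the nodes of $X_0$, choose disjoint neighborhoods biholomorphic to a neighborhood of $0$ in the conifold~\eqref{eq: conifold}, and let $X^\ast=X_0\setminus\{p_1,\dots,p_k\}$. This smooth locus is the region common to the resolution $\widehat X$ (where it equals $\widehat X\setminus\bigcup_i C_i$) and to a smoothing $X_t$ for $|t|$ small (where it equals $X_t\setminus\bigcup_i L_i$, with $L_i$ the vanishing $S^3$). By Lemma~\ref{lem: defRealCoords} each link $\Sigma_i$ is diffeomorphic to $S^2\times S^3$, and the two factors encode the two ways of capping off the node: the $S^2$-factor bounds in $X_t$ but represents $[C_i]$ in $\widehat X$, while the $S^3$-factor bounds in $\widehat X$ but represents $[L_i]$ in $X_t$. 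A first-order smoothing equips node $i$ with a parameter $\lambda_i\in\mathbb{C}^\ast$, the coefficient of $t$ in the local equation $\sum_j z_j^2=\lambda_i t+O(t^2)$, and by Lemma~\ref{lem: sLagMod} these are recovered, up to the universal factor $2\pi^2 t$, as the periods $\int_{L_i}\Omega_t=2\pi^2\lambda_i t$ of the holomorphic volume form over the special Lagrangian vanishing cycles.

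Next comes the core of the argument. Since $H_2(\widehat X,\mathbb{C})$ pairs perfectly with de Rham cohomology $H^2(\widehat X,\mathbb{C})$, the relation $\sum_i\lambda_i[C_i]=0$ is equivalent to $\sum_i\lambda_i\int_{C_i}\alpha=0$ for every closed $2$-form $\alpha$ on $\widehat X$. Fix such an $\alpha$ and transport its restriction to the common region $X^\ast\subset X_t$. As $\Omega_t$ is holomorphic it is $d$-closed, so $\alpha\wedge\Omega_t$ is a closed $5$-form on $X_t\setminus\bigcup_i L_i$; removing small tubular neighborhoods of the $L_i$ leaves a compact manifold whose only boundary is $\bigsqcup_i\Sigma_i$, and Stokes' theorem gives $\sum_i\int_{\Sigma_i}\alpha\wedge\Omega_t=0$. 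It remains to evaluate each link integral. Because $H^2(\Sigma_i;\mathbb{C})$ and $H^3(\Sigma_i;\mathbb{C})$ are each one-dimensional, with generators dual to the two K\"unneth factors, one has $[\alpha|_{\Sigma_i}]=\big(\int_{C_i}\alpha\big)\,\pi^\ast\omega_{S^2}$ and $[\Omega_t|_{\Sigma_i}]=\big(\int_{L_i}\Omega_t\big)\,\pi^\ast\omega_{S^3}$, so that $\int_{\Sigma_i}\alpha\wedge\Omega_t=\big(\int_{C_i}\alpha\big)\big(\int_{L_i}\Omega_t\big)=2\pi^2\lambda_i t\int_{C_i}\alpha$. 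Substituting and cancelling the common factor $2\pi^2 t$ yields $\sum_i\lambda_i\int_{C_i}\alpha=0$, as required.

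The step I expect to be the crux is this factorization of the link integral: the identification $\Sigma_i\cong S^2\times S^3$ in which the resolution curve $C_i$ and the smoothing sphere $L_i$ appear as the two complementary factors is exactly the topological shadow of the two distinct resolutions of the conifold, and it is what forces the $2$-form period over $C_i$ and the $3$-form period over $L_i$ to pair into the top class of the link. Secondary points I would handle with more care in a full write-up are the compatibility of orientations on the $\Sigma_i$, so that all terms enter with the same sign (any residual sign can be absorbed into $\lambda_i$ without affecting non-vanishing), and the freedom to transport the fixed form $\alpha$ across $X^\ast$ and to take $|t|$ small enough that $\Omega_t$ and the links lie in the common region. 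Note finally that this argument gives only necessity; the converse, producing an actual smoothing from the relation~\eqref{eq: FriedmanRelation}, rests on the deformation-theoretic input of Friedman and is not obtained by this method.
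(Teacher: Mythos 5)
Your proof of the necessity direction is correct, but it takes a genuinely different route from the paper's. The paper works infinitesimally and with currents: it differentiates $\nu^*F_t^*\Omega_{X_t}$ at $t=0$, computes the first-order term $\widetilde{\Omega}_1$ of the expansion~\eqref{eq: pfFriedmanVolExpan} explicitly, proves $d(\nu^*\widetilde{\Omega}_1)=2\pi^2[\mathbb{P}^1]$ as currents via scaling and homogeneity arguments, and then shows (after controlling the correction terms coming from writing $\Omega_{X_t}=h(z,t)\Omega_t$, whose contributions are shown to carry no mass on the $C_i$) that the current $\sum_i 2\pi^2\tau_i[C_i]$ is globally exact on $\widehat{X}$, hence zero in $H_2(\widehat{X},\mathbb{C})$. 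You instead work at a fixed small $t\neq 0$ and dualize: the homological relation is tested against an arbitrary closed $2$-form $\alpha$, Stokes' theorem applied to the closed $5$-form $\alpha\wedge\Omega_t$ on $X_t$ minus tubular neighborhoods of the vanishing cycles reduces everything to link integrals, and the pairing of $H^2(\Sigma_i)$ with $H^3(\Sigma_i)$ on the link $\Sigma_i\cong S^2\times S^3$ factors each link integral as $\bigl(\int_{C_i}\alpha\bigr)\bigl(\int_{L_i}\Omega_t\bigr)$; here the fact that the $S^2$ factor is homologous to $C_i$ in the resolution-side neighborhood, while the $S^3$ factor is homologous to $L_i$ on the smoothing side, is exactly right. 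Your route is more elementary: it avoids currents, the explicit computation of $\widetilde{\Omega}_1$, and the delicate error-term estimates, with the topology of the link doing all the work (indeed Poincar\'e duality on $\Sigma_i$ already suffices, without the product structure). What the paper's heavier computation buys is the refined first-order data: the explicit formula for $\widetilde{\Omega}_1$, the identification $\lambda_i=\lim_{t\to 0}\frac{1}{t}\int_{L_i(t)}\Omega_{X_t}$ of the coefficients as special Lagrangian periods, and a current-theoretic formulation that generalizes to nodal Calabi-Yau $n$-folds, as noted in the remarks following the proof.

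Two small points to tighten in a full write-up. First, you conflate the coefficient $\lambda_i$ in the local equation $\sum_j z_j^2=\lambda_i t+O(t^2)$ with the normalized period of the global form: since $\Omega_{X_t}=h(z,t)\Omega_t$ locally with $h$ valued in $\mathbb{C}^*$, one has $\int_{L_i}\Omega_{X_t}=2\pi^2 h(0,0)\lambda_i t+O(t^2)$. This is harmless, because the theorem only asks for the existence of nonzero constants; for a fixed small $t\neq 0$ you may simply take $\lambda_i:=\int_{L_i}\Omega_{X_t}$, which is nonzero by this asymptotic. Second, like the paper's own proof (whose hypotheses posit an actual family over the disc), your argument uses an honest one-parameter smoothing rather than merely a first-order one; this matches the paper's level of generality but is formally stronger than the hypothesis appearing in the statement of the theorem, and the paper's differentiation at $t=0$ is somewhat closer in spirit to genuinely first-order data.
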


As explained in \cite{Kont, RolTh}, the set of classes in $H_{2}(\widehat{X},\mathbb{C})$ satisfying Friedman's relation~\eqref{eq: FriedmanRelation} should be viewed as the appropriate {\em definition} of $H^{2,1}(X_0, \mathbb{C})$.  With this perspective, it turns out that, as in the case of compact, K\"ahler Calabi-Yau manifolds, the deformation theory is unobstructed for nodal Calabi-Yau threefolds.  This was established independently by Kawamata \cite{Kawamata}, Ran \cite{Ran} and Tian \cite{Tian}:

\begin{thm}[Kawamata \cite{Kawamata}, Ran \cite{Ran}, Tian \cite{Tian}]\label{thm: unobstructed}
In the setting of Friedman's theorem, assume in addition that $\widehat{X}$ is K\"ahler, or satisfies the $\ddbar$-lemma.  Then any first order smoothing integrates to a genuine smoothing.
\end{thm}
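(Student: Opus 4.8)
The plan is to deduce unobstructedness from the $T^1$-lifting criterion of Ran and Kawamata, using the Calabi-Yau structure to convert the deformation-theoretic cohomology into Hodge-theoretic cohomology, where the K\"ahler (or $\partial\bar\partial$-lemma) hypothesis supplies the necessary degeneration. I would first set up the deformation functor $\mathrm{Def}_{X_0}$ of the singular space $X_0$. Since an ODP is a hypersurface singularity, it is a local complete intersection, so the local obstruction sheaf $\mathcal{E}xt^2(\Omega^1_{X_0},\mathcal{O}_{X_0})$ vanishes, and the deformation theory is governed by a tangent--obstruction theory with values in $T^i = \mathrm{Ext}^i(\Omega^1_{X_0},\mathcal{O}_{X_0})$. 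In the local-to-global sequence $0 \to H^1(\mathcal{T}^0) \to T^1 \to H^0(\mathcal{T}^1) \to H^2(\mathcal{T}^0)$, a first-order smoothing is a class $\xi \in T^1$ whose image in the skyscraper $H^0(X_0,\mathcal{E}xt^1(\Omega^1_{X_0},\mathcal{O}_{X_0})) = \bigoplus_{\text{nodes}}\mathbb{C}$ is a generator at each node; Friedman's relation~\eqref{eq: FriedmanRelation} is precisely the condition that such a $\xi$ exists.

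Next I would invoke the $T^1$-lifting principle: to prove that the smoothing functor is smooth (unobstructed), it suffices to show that for each $n$ and each deformation $X_n \to \mathrm{Spec}\,A_n$ restricting the given first-order smoothing, the natural restriction map on relative first-order deformations
\[
T^1(X_n/A_n) \longrightarrow T^1(X_{n-1}/A_{n-1}), \qquad A_n = \mathbb{C}[t]/(t^{n+1}),
\]
is surjective. This replaces a (hard) direct computation of the obstruction class in $T^2$ by a surjectivity statement that can be controlled by the Calabi-Yau geometry.

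The Calabi-Yau input enters through the holomorphic volume form. Contraction with the relative $(3,0)$-form $\Omega$ gives an isomorphism $\iota_\Omega : T_{X_n/A_n} \xrightarrow{\sim} \Omega^{2}_{X_n/A_n}$ (suitably interpreted across the nodes, where one uses the Gorenstein volume form of Section~\ref{sec: conifold}), and hence identifies the relative deformation cohomology $T^1(X_n/A_n)$ with a piece of the relative Hodge cohomology. The surjectivity of the restriction map then follows once one knows that this Hodge cohomology is \emph{free} over the Artin base $A_n$ --- equivalently, that the relevant Hodge numbers are constant in the family. This constancy is exactly the $E_1$-degeneration of the Hodge-to-de Rham spectral sequence, and it is here that the hypothesis on $\widehat{X}$ is used: the K\"ahler condition, or more generally the $\partial\bar\partial$-lemma, forces this degeneration, so the filtered pieces are locally free and restriction is surjective.

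I expect the main obstacle to be the singular Hodge theory. Unlike the smooth Bogomolov-Tian-Todorov setting, $X_0$ carries nodes, so the contraction $\iota_\Omega$ and the degeneration statement must be understood on a smooth model: one wants to match the deformation cohomology of the \emph{singular} $X_0$ with Hodge data that can be controlled on the \emph{smooth} resolution $\widehat{X}$, where the $\partial\bar\partial$-lemma is available. Making this comparison precise --- interpreting the mixed Hodge structure of $X_0$, accounting for the exceptional curves $C_i$, and verifying that the $\partial\bar\partial$-lemma indeed yields freeness of the filtered pieces over each $A_n$ --- is the delicate heart of the argument. The analytic alternative, solving the Maurer-Cartan equation $\bar\partial\phi + \frac{1}{2}[\phi,\phi] = 0$ order by order via the Tian-Todorov lemma, runs into the same difficulty, since both that lemma and the $\partial\bar\partial$-lemma must be adapted across the conifold points.
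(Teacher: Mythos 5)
The paper does not actually prove Theorem~\ref{thm: unobstructed}: it is quoted directly from Kawamata \cite{Kawamata}, Ran \cite{Ran} and Tian \cite{Tian}, so your proposal can only be judged against those sources. Judged that way, your skeleton is the right one. The $T^1$-lifting criterion is exactly the mechanism of \cite{Ran, Kawamata}; contraction with the holomorphic volume form to convert deformation cohomology into Hodge-theoretic cohomology is the standard Calabi--Yau input; and the order-by-order Maurer--Cartan argument you mention at the end is essentially Tian's route \cite{Tian}. Your setup is also accurate in the details it does contain: an ODP is a hypersurface singularity, so $\Omega^1_{X_0}$ has local projective dimension one and $\mathcal{E}xt^2(\Omega^1_{X_0},\mathcal{O}_{X_0})=0$, and a first-order smoothing is indeed a class in ${\rm Ext}^1(\Omega^1_{X_0},\mathcal{O}_{X_0})$ mapping to a generator of each skyscraper summand, whose existence is equivalent to Friedman's relation~\eqref{eq: FriedmanRelation}.

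There is, however, a genuine gap, and you have located it yourself without closing it. The step ``surjectivity of $T^1(X_n/A_n)\to T^1(X_{n-1}/A_{n-1})$ follows from freeness of relative Hodge cohomology over $A_n$, which follows from $E_1$-degeneration supplied by the K\"ahler or $\partial\bar\partial$-lemma hypothesis'' is, in the nodal setting, the theorem itself rather than a reduction of it. Two distinct difficulties are being elided. First, the $X_n$ are infinitesimal thickenings of the \emph{singular} space $X_0$, so the degeneration and cohomology-and-base-change arguments that make this step routine in the smooth Bogomolov--Tian--Todorov case simply do not apply: the coherent cohomology of the relative differentials of $X_n/A_n$ is not computed by any spectral sequence known a priori to degenerate, and ``constancy of Hodge numbers'' has no intrinsic meaning across the nodes. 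Second, the hypothesis lives on $\widehat{X}$, but ${\rm Def}(\widehat{X})$ and ${\rm Def}(X_0)$ are genuinely different functors: the $(-1,-1)$ curves $C_i$ are rigid, so every deformation of $\widehat{X}$ blows down to a deformation of $X_0$ that keeps its nodes, and the smoothing directions you care about are never in the image of this map. One therefore cannot transport the surjectivity question to the resolution by functoriality alone; building the actual bridge --- mixed Hodge theory and the resolution comparison in \cite{Ran, Kawamata}, or $L^2$-type Hodge theory on the regular part with control at the nodes in \cite{Tian} --- is precisely where the $\partial\bar\partial$-lemma on $\widehat{X}$ and Friedman's relation do their work, and it is the substance of all three cited papers. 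As written, your argument establishes unobstructedness only for smooth Calabi--Yau manifolds, where it reproduces Kawamata's proof; the nodal case is stated as a hope rather than proved.
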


We now give a differential geometric proof of the necessity part of Theorem~\ref{thm: Friedman}.  This proof is inspired in part by the arguments of Rollenske-Thomas \cite{RolTh}, Kontsevich \cite{Kont} and calculations of Tian \cite{Tian}.
\begin{proof}[Proof of necessity in Theorem~\ref{thm: Friedman}]
Consider the local model.  The map $\Phi_t$ introduced in~\eqref{eq: PhitMap} maps
\[
\Phi_t(z) : V_0\cap \left\{\|z\|^2 \geq \frac{t}{2}\right\}\rightarrow V_t \setminus\{\|z\|^2=t\}.
\]
Pulling back $\Omega_t$ by $\Phi_t$ and expanding in $t$ yields
\begin{equation}\label{eq: pfFriedmanVolExpan}
\Phi_t^*\Omega_t = \Omega_0 + t \widetilde{\Omega}_1 + \sum_{k \geq 2} t^k\widetilde{\Omega}_k
\end{equation}
where each $\tilde{\Omega}_k$ is smooth $3$-form on $V_0\setminus \{0\}$.  Direct calculation shows that $\widetilde{\Omega}_1$ has components of type $(3,0)$ and $(2,1)$ only.  It will be useful to have a formula for $\widetilde{\Omega}_1$.  On $\{z_4\ne 0\} \cap V_t$ the holomorphic volume form is given by
\begin{equation}\label{eq: pfFriedmanVolForm}
\Omega_t=\frac{dz_1\wedge dz_2\wedge dz_3}{z_4}
\end{equation}
Pulling back by $\Phi_t$ yields
\begin{equation}\label{eq: computeDefForm}
\begin{aligned}
\Phi_t^*\Omega_t &= \frac{1}{z_4+\frac{\overline{z_4}t}{2\|z\|^2}} \left(d(z_1+\frac{\overline{z_1}t}{2\|z\|^2}) \wedge d(z_2+\frac{\overline{z_2}t}{2\|z\|^2}) \wedge d(z_3+\frac{\overline{z_3}t}{2\|z\|^2})\right)\\
&= \Omega_0 -t\frac{\overline{z_4}}{2z_4^2\|z\|^2} dz_1\wedge dz_2 \wedge dz_3\\
&\quad+ \frac{t}{z_4} \left(d\left(\frac{\overline{z_1}}{2\|z\|^2}\right)\wedge dz_2\wedge dz_3 + dz_1\wedge d\left(\frac{\overline{z_2}}{2\|z\|^2}\right)\wedge dz_3 + dz_1\wedge dz_2 \wedge d\left(\frac{\overline{z_3}}{2\|z\|^2}\right)\right)\\
&\quad +\text{ higher order terms}
\end{aligned}
\end{equation}
and so
\[
\begin{aligned}
\widetilde{\Omega}_1 &= \frac{\overline{z_4}}{2z_4^2\|z\|^2} dz_1\wedge dz_2 \wedge dz_3\\
&\quad+ \frac{1}{z_4} \left(d\left(\frac{\overline{z_1}}{2\|z\|^2}\right)\wedge dz_2\wedge dz_3 + dz_1\wedge d\left(\frac{\overline{z_2}}{2\|z\|^2}\right)\wedge dz_3 + dz_1\wedge dz_2 \wedge d\left(\frac{\overline{z_3}}{2\|z\|^2}\right)\right)\\
\end{aligned}
\]
Let $M \subset V_0$ be any $3$-sphere such that $\Phi_t(M)$ is homologous to the vanishing cycle $L_t$ in $V_t$.  Concretely, choose a point $z_0 \in V_0 \cap \{\|z\|^2=s\}$ and consider the collection of points $z \in V_0 \cap \{\|z\|^2=s\}$ such that ${\Im}(z-z_0)=0$. Note that such $3$-spheres are precisely the fibers of an $S^3$-bundle over $S^2$, by the calculation of Lemma~\ref{lem: defRealCoords}.  Combining this observation with the construction of $\Phi_t$, in particular ~\eqref{eq: PhiMapNorm}, one can easily check that $\Phi_t(M)$ is homologous to the vanishing cycle $L_t$ for $t <s$.

By Lemma~\ref{lem: sLagMod} we have
\[
\int_{M}\Phi_t^*\Omega_t = \int_{\Phi_t(M)}\Omega_t = \int_{L_t}\Omega_t= 2\pi^2t
\]
and so we must have $\int_{M}\widetilde{\Omega}_1= 2\pi^2$, and $\int_{M}\Omega_0 =\int_{M}\widetilde{\Omega}_k=0 $ for all $k \geq 2$. 

Now we observe that in ~\eqref{eq: computeDefForm}, the form $\widetilde{\Omega}_1$ is invariant under rescaling $V_0$.  To see what this implies let
\[
\nu:\widehat{V}=\mathcal{O}_{\mathbb{P}^1}(-1)^{\oplus 2} \rightarrow V_0
\]
be the small resolution of $V_0$, and let
\[
\pi:\mathcal{O}_{\mathbb{P}^1}(-1)^{\oplus 2} \rightarrow \mathbb{P}^1
\]
be the projection, and wite $[\mathbb{P}^1]$ for the current of integration over $\mathbb{P}^1 \subset \widehat{V}$.  We claim that
\[
d(\nu^*\widetilde{\Omega}_1) = 2\pi^2 [\mathbb{P}^1].
\]
Indeed, since $d \widetilde{\Omega}_1=0$ we certainly have that
\[
d(\nu^*\widetilde{\Omega}_1) =0 \quad \text{ on }\quad  \mathcal{O}_{\mathbb{P}^1}(-1)^{\oplus 2} \setminus \mathbb{P}^1.
\]
We only need to evaluate the behaviour along $\mathbb{P}^1$.  To do this, let $\beta$ be any compactly supported smooth $2$-form on $\widehat{V}$.  Write
\[
\beta= \beta_0 + \mathcal{E}
\]
where $\beta_0 = \pi^* (\beta|_{\mathbb{P}^1})$, and $\mathcal{E} = \beta -\beta_0$ is a two-form whose restriction to $\mathbb{P}^1$ vanishes.  Note that for degree reasons, $\beta_0$ is closed.  Let
\[
N_{\epsilon} = \nu^{-1} (V_0 \cap \{ \|z\| <\epsilon\})
\]
be an $\epsilon$ neighborhood of $\mathbb{P}^1$.  Then we have
\[
\int_{N_{\epsilon}} d(\nu^*\widetilde{\Omega}_1) \wedge \beta_0 = \int_{\del N_{\epsilon}}\nu^*\widetilde{\Omega}_1 \wedge \beta_0
\]
using that $\beta_0$ is closed.  On the other hand, by Hartog's theorem the rescaling of $V_0$ lifts to a rescaling along the fibers of $\widehat{V}$, with the property that rescaling by $t$ maps $\del N_{\epsilon} \rightarrow \del N_{t\epsilon}$. Since $\widetilde{\Omega}_1$ and $\beta_0$ are both invariant under this rescaling, we conclude that
\[
\int_{\del N_{\epsilon}}\nu^*\widetilde{\Omega}_1 \wedge \beta_0=\int_{\del N_{\epsilon'}}\nu^*\widetilde{\Omega}_1 \wedge \beta_0
\]
for all $\epsilon, \epsilon'$.  Thus, we have
\[
\lim_{\epsilon \rightarrow 0}\int_{N_{\epsilon}} d(\nu^*\widetilde{\Omega}_1) \wedge \beta_0 = \lim_{\epsilon \rightarrow 0} \int_{\del N_{\epsilon}}(\nu^*\widetilde{\Omega}_1) \wedge \beta_0 = \int_{\del N_{\epsilon'}}(\nu^*\widetilde{\Omega}_1) \wedge \beta_0
\]
The latter integral can be evaluated explicitly. We note that $\del N_{\epsilon'} = \nu^{-1}( \{ \|z\|=\epsilon'\})$ is precisely the trivial $S^3$ bundle over $\mathbb{P}^1$ defined by the sections of length $\epsilon'$, measured with respect to the Fubini-Study metric on in $\mathcal{O}_{\mathbb{P}^1}(-1)^{\oplus 2}$.  Since the fibers of this fibration are homologous to $\nu^{-1}(M)$, and
\[
\int_{M}\widetilde{\Omega}_1 =2\pi^2
\]
we conclude that
\[
\int_{\del N_{\epsilon'}}(\nu^*\widetilde{\Omega}_1) \wedge \beta_0 = 2\pi^2 \int_{\mathbb{P}^1} \beta_0.
\]
Finally, we need to consider the error term.  Again we compute
\[
\int_{N_{\epsilon}} d(\nu^*\widetilde{\Omega}_1) \wedge \mathcal{E} = \int_{\del N_{\epsilon}}\nu^*\widetilde{\Omega}_1 \wedge \mathcal{E} - \int_{N_{\epsilon}} \nu^*\widetilde{\Omega}_1 \wedge d\mathcal{E}
\]
Using the $\mathbb{C}^*$ action on $\widehat{V}$ we can decompose $\mathcal{E}, d\mathcal{E}$ into a sum of homogeneous forms.  Since $\mathcal{E}$ vanishes along $\mathbb{P}^1$, each term in the sum has degree at least $1$
\[
\mathcal{E} = \sum_{k \geq 1}\mathcal{E}_k, \qquad \sum_{k\geq 1}(d\mathcal{E})_k
\]
and then, by rescaling
\[
\begin{aligned}
\int_{\del N_{\epsilon}}\nu^*\widetilde{\Omega}_1 \wedge \mathcal{E}_k &= \epsilon^k \int_{\del N_{1}}\nu^*\widetilde{\Omega}_1 \wedge \mathcal{E}_k\\
\int_{N_{\epsilon}}\nu^*\widetilde{\Omega}_1 \wedge (d\mathcal{E})_k &= \epsilon^k \int_{ N_{1}}\nu^*\widetilde{\Omega}_1 \wedge (d\mathcal{E})_k
\end{aligned}
\]
from which it follows that
\begin{equation}\label{eq: noErrorcontribution}
\lim_{\epsilon \rightarrow 0}\int_{N_{\epsilon}} d(\nu^*\widetilde{\Omega}_1) \wedge \mathcal{E}=0.
\end{equation}

We now use this local calculation to examine the setting of a global conifold transition.  Roughly speaking, the strategy is to combine the local model calculation with the existence of a global holomorphic $(3,0)$ form to recapture Friedman's condition.

Suppose that $\pi:\mathcal{X} \rightarrow \Delta=\{t\in \mathbb{C} : |t|<1\}$ is a family such that:
\begin{itemize}
    \item[(i)] the total space $\mathcal{X}$ is a smooth complex manifold, 
    \item[(ii)] for $t\in \mathbb{C}^*$ the fibers $\pi^{-1}(t)$ are compact, complex threefolds admitting global, non-vanishing holomorphic $(3,0)$ forms, $\Omega_{X_t}$
    \item[(iii)] $X_0 = \pi^{-1}(0)$ has only nodal singularities.  
    \end{itemize}
    
Let $p_i \in X_0$ be a node and let $\mathcal{U}_{p_i} \subset \mathcal{X}$ be an open neighborhood in which $\mathcal{X}$ is biholomorphic to a neighborhood of the origin in the family
\[
\mathcal{V} = \{(z,t) \in \mathbb{C}^4\times \Delta : \sum_{i=1}^4 z_i^2 -t =0\}
\]
with the projection $\pi$ given by projection to the $t$-coordinate.  Denote
\[
\mathfrak{U}(t) = \bigcup_i X_0\cap \left(\mathcal{U}_{p_i} \setminus \left\{\|z\|^2 >\frac{|t|}{2}\right\} \right)
\]
a neighborhood of the nodes in $X_0$ and let 
\[
L_i(t) = \{z \in X_t: \|z\|^2=t\}\subset X_t.
\]
denote the vanishing cycles in $X_t$.  Near each node $p_i$ we have the map $\Phi_{t}$ from our local model scenario.  Using the flow of a vector field (see, e.g. \cite[Lemma 2.13]{CPY})  we can easily extend these locally defined maps to a globally defined map
\begin{equation}\label{eq: fiberDiffeo}
F_t: X_0 \setminus\mathfrak{U}(t) \rightarrow X_t \setminus \left(\bigcup_i L_i(t)\right)
\end{equation}
Let $\nu:\widehat{X}\rightarrow X_0$ be a small resolution with exceptional rational curves $C_i$ over each node $p_i$.  One the one hand we have
\[
\frac{d}{dt}\bigg|_{t=0} (\nu^* F_{t}^*\Omega_{X_t}) = d\left(\nu^*\iota_{V}\Omega_0\right)
\]
where $V$ is the vector field whose time $t$ flow defines the map $F_t$.
Our goal is to show that the local calculation we performed above implies
\[
\frac{d}{dt}\bigg|_{t=0} (\nu^* F_{t}^*\Omega_{X_t}) = \sum_i \lambda_i[C_i]
\]
for $\lambda_i\in \mathbb{C}^*$.  Combining these two formulae give
\[
\sum_i \lambda_i[C_i]=d\left(\nu^*\iota_{V}\Omega_0\right)
\]
which yields Friedman's relation.

The only thing left to prove is that the local calculation accurately represents the global situation.  We work in the open set $\mathcal{U}_{p_i}$ near a fixed node $p_i \in X_0$.  It is not hard to show  (see, eg. \cite[Lemma 4.3]{CGPY}) that we can write
\[
\Omega_{X_t} = h(z,t)\Omega_{t}
\]
where $\Omega_t$ is the model holomorphic volume form (see e.g. ~\eqref{eq: pfFriedmanVolForm}), and  $h(z,t): \mathcal{U} \rightarrow \mathbb{C}^*$ is functions which is holomorphic in $z$ and smooth in $t$, away from the node.  Let $\tau_i = h(0,0) \in \mathbb{C}^*$.
By smoothness, the functions $h_t= \frac{\del h}{\del t}$ and $h_{\bar{t}} = \frac{\del h}{\del \bar{t}}$ are bounded uniformly in $t$ on compact sets away from the node.  On the other hand, since $h_t, h_{\bar{t}}$ are both holomorphic, this bound extends over the node by Hartog's theorem.  Now we compute locally near a node
\begin{equation}\label{eq: pfFriedmanErrorTerms}
\frac{d}{dt}\big|_{t=0}\Phi_{t}^*\Omega_{X_{t}} = \frac{\del h}{\del z}\frac{z}{2\|z\|^2} \Omega_{0} + \frac{\del h}{\del t} \Omega_0 + (h(z,0)-\tau_i))\widetilde{\Omega}_1 + \tau_i\widetilde{\Omega}_1
\end{equation}
where $\widetilde{\Omega}_1$ is defined in~\eqref{eq: pfFriedmanVolExpan}, and computed explicitly in~\eqref{eq: computeDefForm}.  Let $\nu: \widehat{X} \rightarrow X_0$ be a small resolution, and $C_i \subset \widehat{X}$ the rational curve such that $\nu(C_i)=p_i$.  We claim that, in  $\nu^{-1}(\mathcal{U}_{p_i})$ there holds
\[
d\left(\nu^*\left(\frac{d}{dt}\big|_{t=0}\Phi_{t}^*\Omega_{X_{t}}\right)\right) = 2\pi^2\tau_i[C_i].
\]
Indeed, our local calculation yields
\[
d\widetilde{\Omega}_1 = 2\pi^2[C_i]
\]
so the only thing we need to show is that the first three terms on the right hand side of~\eqref{eq: pfFriedmanErrorTerms} do not contribute.  As before, closedness implies that on $\nu^{-1}(\mathcal{U}_{p_i})\setminus C_i$ there holds
\[
d \nu^*\left(\frac{\del h}{\del z}\frac{z}{2\|z\|^2} \Omega_{0} + \frac{\del h}{\del t} \Omega_0 + (h(z,0)-h(0,0))\widetilde{\Omega}_1\right) =0
\]
and so we only need to check if this form carries any mass on $C_i$.  This follows from considerations of scaling and homogeneity, using arguments similar to those used to justify~\eqref{eq: noErrorcontribution}.  First we note that $\Omega_0$ is homogeneous of degree $2$ under rescaling, and so the uniform bounds on $\frac{\del h}{\del z}$ and $\frac{\del h}{\del t}$ imply that
\[
d\left(\frac{\del h}{\del z}\frac{z}{2\|z\|^2} \Omega_{0} + \frac{\del h}{\del t} \Omega_0\right)
\]
carries no mass on $C_i$, and hence vanishes identically.  Similarly, since $(h(z,0)-h(0,0))$ vanishes on $C_i$, the final term also carries no mass on $C_i$, and vanishes identically.

\end{proof}

\begin{rk} 
From the above argument the constants $\lambda_i \in \mathbb{C}^*$ appearing in Theorem~\ref{thm: Friedman} are precisely given by
\[
\lambda_i = \lim_{t\rightarrow 0} \frac{1}{t}\int_{L_i(t)}\Omega_{X_t}.
\]
In particular, the smoothing is determined essentially by {\em special Lagrangian} data.  This can be easily deduced from the above calculation together with Lemma~\ref{lem: sLagMod}.  This was already observed in \cite{Kont, RolTh}
\end{rk}

\begin{rk}
    The above calculation generalizes to arbitrary dimensions to yield a necessary condition for the existence of a smoothing of a Calabi-Yau $n$-fold with nodal singularities, recovering some results of Rollenske-Thomas \cite{RolTh}.  It is interesting to note that, due to scaling, one only obtains a non-trivial obstruction to smoothing when $n$ is odd. 
\end{rk}

\begin{rk}
Theorem~\ref{thm: Friedman} and Theorem~\ref{thm: unobstructed} have recently been extended to higher dimensions, under various assumptions, by Friedman-Laza, as a consequence of their study of {\em higher Du Bois} and $k$-liminal singularities; see \cite{FrLa1, FrLa2, FrLa3, FrLa4, FrLa5}. It would be interesting to provide a differential geometric interpretation of some of these results, at least in some model cases.  Further general results on the unobstructedness of the deformation theory of singular Calabi-Yau varieties, extending Theorem~\ref{thm: unobstructed}, have recently been obtained by Imagi \cite{Imagi} and Friedman \cite{Friedman2}.  
\end{rk}

The next result describes the global topology change resulting from a conifold transition, see e.g. \cite{Tian, NamSt, Rossi}

\begin{prop}\label{prop: topChange}
Let $\widehat{X} \rightarrow X_0 \leadsto X_t$ be a conifold transition where $\widehat{X} \rightarrow X_0$ contracts $N$ disjoint $(-1,-1)$ curves, and let $L_i$ $1 \leq i \leq N$ be the vanishing cycles of the smoothing $X_0 \leadsto X_t$.  Define
\begin{itemize}
\item $N= \#\{{\rm Sing}X_0\}$
\item $k= \dim_{\mathbb{R}} \{ {\rm Span}\{ [C_i]\}_{1\leq i \leq N} \} \subset H_2(\widehat{X}, \mathbb{R})$
\item $c= \dim_{\mathbb{R}} \{ {\rm Span}\{ [L_i]\}_{1\leq i \leq N} \} \subset H_3(X_t, \mathbb{R})$
\end{itemize}
Then we have
\begin{equation}\label{eq: topChange}
b_1(X_t)=b_1(\widehat{X}), \quad b_2(X_t) = b_2(\widehat{X})-k \quad b_3(X_t) = b_3(\widehat{X})+2c, \quad N= k+c
\end{equation}
Furthermore, the Hodge numbers change according to
\begin{equation}\label{eq: hodgeChange}
h^{2,1}(X_t) = h^{2,1}(\widehat{X})+c \quad h^{1,1}(X_t) = h^{1,1}(\widehat{X})-k.
\end{equation}
\end{prop}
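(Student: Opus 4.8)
The plan is to realize both $\widehat{X}$ and $X_t$ as gluings of one common ``bulk'' piece with different local caps, and then to run the long exact sequences of the two resulting pairs in parallel. Let $W := X_0 \setminus \mathfrak{U}$ be the compact manifold with boundary obtained by excising small cone neighborhoods of the nodes, so that $\partial W = \bigsqcup_{i=1}^{N}\Sigma_i$ with each $\Sigma_i \cong S^2\times S^3$ the link of a node (Lemma~\ref{lem: defRealCoords}). Using the small resolution $\nu$, which is an isomorphism away from the $C_i$, together with the fiber diffeomorphism $F_t$ of~\eqref{eq: fiberDiffeo}, we may identify $W$ simultaneously with the complement of tubular neighborhoods of the $C_i$ in $\widehat{X}$ and of the $L_i$ in $X_t$. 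Thus $\widehat{X} = W \cup_{\partial W}\widehat{N}$ and $X_t = W \cup_{\partial W}\check{N}$, where $\widehat{N}=\bigsqcup_i\widehat{N}_i$ is a union of disk bundles in $\mathcal{O}_{\mathbb{P}^1}(-1)^{\oplus 2}$ retracting onto $C_i\cong S^2$, and $\check{N}=\bigsqcup_i\check{N}_i$ is a union of tubular neighborhoods diffeomorphic to $S^3\times D^3$ retracting onto $L_i\cong S^3$.

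First I would record the local homology of the caps with $\mathbb{Q}$-coefficients. By excision and Lefschetz duality, $H_\bullet(\widehat{X},W)\cong H_\bullet(\widehat{N},\partial W)$ is concentrated in degrees $4$ and $6$ (each $\cong\mathbb{Q}^N$), while $H_\bullet(X_t,W)\cong H_\bullet(\check{N},\partial W)$ is concentrated in degrees $3$ and $6$ (each $\cong\mathbb{Q}^N$); everything reduces to the long exact sequences of $(\widehat{N}_i,\partial\widehat{N}_i)$ and $(\check{N}_i,\partial\check{N}_i)$ using $\widehat{N}_i\simeq S^2$, $\check{N}_i\simeq S^3$. Feeding this into the long exact sequences of the pairs, degrees $1$ and $2$ immediately give $H_1(\widehat{X})\cong H_1(W)\cong H_1(X_t)$ and $H_2(\widehat{X})\cong H_2(W)$, whence $b_1(X_t)=b_1(\widehat{X})$. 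Tracking boundary maps, the generator of $H_3(\check{N}_i,\partial W)$ maps to $i_*[S^2_i]\in H_2(W)$ and the generator of $H_4(\widehat{N}_i,\partial W)$ maps to $i_*[S^3_i]\in H_3(W)$, where $[S^2_i],[S^3_i]$ are the two sphere classes of $\Sigma_i$ and $i_*$ is inclusion into $W$. Writing $i_*^{S^2},i_*^{S^3}\colon\mathbb{Q}^N\to H_\bullet(W)$ for the induced maps, and noting that $[C_i]$ is homologous in $\widehat{N}_i$ to $S^2_i\subset\partial W$ while $[L_i]$ is homologous in $\check{N}_i$ to $S^3_i\subset\partial W$, I obtain $H_2(X_t)\cong H_2(\widehat{X})/\langle[C_i]\rangle$, a short exact sequence $0\to H_3(W)\to H_3(X_t)\to\ker(i_*^{S^2})\to 0$, and $H_3(\widehat{X})\cong\operatorname{coker}(i_*^{S^3})$. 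This already yields $b_2(X_t)=b_2(\widehat{X})-k$ with $k=\operatorname{rank} i_*^{S^2}$, identifies $c=\operatorname{rank} i_*^{S^3}$, and gives $b_3(X_t)-b_3(\widehat{X})=\dim\ker(i_*^{S^2})+\operatorname{rank}(i_*^{S^3})$.

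The crux, and the step I expect to be the main obstacle, is the identity $N=k+c$. I would deduce it from the ``half lives, half dies'' principle applied to the compact oriented $6$-manifold $W$: in the long exact sequence of $(W,\partial W)$ one has $\ker\big(i_*\colon H_j(\partial W)\to H_j(W)\big)=\operatorname{im}\big(\partial\colon H_{j+1}(W,\partial W)\to H_j(\partial W)\big)$, and Lefschetz duality makes $\ker(i_*|_{H_2(\partial W)})$ and $\ker(i_*|_{H_3(\partial W)})$ exact annihilators of one another under the intersection pairing $H_2(\partial W)\times H_3(\partial W)\to\mathbb{Q}$. This pairing is perfect, since $[S^2_i]\cdot[S^3_j]=\delta_{ij}$ on $\bigsqcup_i S^2\times S^3$, so $\dim\ker(i_*|_{H_2})+\dim\ker(i_*|_{H_3})=N$; equivalently $\operatorname{rank} i_*^{S^2}+\operatorname{rank} i_*^{S^3}=N$, i.e. $k+c=N$. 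Substituting $\dim\ker(i_*^{S^2})=N-k=c$ and $\operatorname{rank}(i_*^{S^3})=c$ into the displayed difference gives $b_3(X_t)=b_3(\widehat{X})+2c$, completing the purely topological statement~\eqref{eq: topChange}.

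Finally, for~\eqref{eq: hodgeChange} I would invoke the Calabi--Yau Hodge diamond: whenever the Hodge decomposition is available (for instance if $\widehat{X}$ and $X_t$ are Kähler, or satisfy the $\partial\bar{\partial}$-lemma as in Theorem~\ref{thm: unobstructed}), simple-connectedness together with $K\sim\mathcal{O}$ forces $h^{1,0}=h^{2,0}=0$ and $h^{3,0}=1$, so that $b_2=h^{1,1}$ and $b_3=2+2h^{2,1}$ on both sides. The relations $h^{1,1}(X_t)=h^{1,1}(\widehat{X})-k$ and $h^{2,1}(X_t)=h^{2,1}(\widehat{X})+c$ then follow at once from the Betti number identities established above.
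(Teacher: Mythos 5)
Your proof is correct, but there is nothing in the paper to compare it against: the paper states Proposition~\ref{prop: topChange} without proof, deferring to the references \cite{Tian, NamSt, Rossi}. Your argument is a complete, self-contained version of the standard topological proof underlying those references: decompose $\widehat{X}$ and $X_t$ as a common bulk $W$ capped off by disk-bundle neighborhoods of the $C_i\cong S^2$ and by $S^3\times D^3$ neighborhoods of the $L_i$, compute the relative homology of the caps (degrees $4,6$ versus $3,6$), and run the two long exact sequences in parallel; the only non-formal input, $N=k+c$, is exactly the ``half lives, half dies'' statement that $\ker\big(H_2(\partial W)\to H_2(W)\big)$ and $\ker\big(H_3(\partial W)\to H_3(W)\big)$ are exact annihilators under the perfect intersection pairing on $\partial W=\bigsqcup_i S^2\times S^3$, which follows from Poincar\'e--Lefschetz duality since $i^*$ on cohomology is the transpose of $i_*$ on homology. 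All the identifications you use check out: the sphere-bundle projection $\partial\widehat{N}_i\to C_i$ induces an isomorphism on $H_2$, so $i_*[S^2_i]$ corresponds to $[C_i]$ under $H_2(W)\cong H_2(\widehat{X})$; and the injectivity of $H_3(W)\to H_3(X_t)$ coming from your short exact sequence is what legitimizes the identification $c=\operatorname{rank} i_*^{S^3}$ with the span of the $[L_i]$ in $H_3(X_t)$ --- it would be worth making that one-line justification explicit. Two further remarks. First, your hedge on~\eqref{eq: hodgeChange} is exactly right and worth keeping: the Betti-number identities~\eqref{eq: topChange} are purely topological (as the paper itself remarks), whereas passing to Hodge numbers requires a Hodge decomposition, and since conifold transitions typically produce non-K\"ahler manifolds this is supplied in the cases the paper needs by the $\partial\bar\partial$-lemma results of Friedman \cite{Fri19} and Li \cite{Li} cited in Section~\ref{sec: geomConifold}. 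Second, your approach has the virtue of making visible \emph{why} $N=k+c$ holds --- it is a duality statement about the links --- which the paper's bare citation obscures.
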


\begin{rk}
We remark that the equations~\eqref{eq: topChange} are purely topological, and do not depend on the presence of an integrable complex structure on $X_t$.  
\end{rk}

\subsection{Examples of conifold transitions}

In this section we are going to describe several examples of conifold transitions to illustrate the many interesting phenomena which can arise.  Constructing interesting complex manifolds through conifold transitions was and idea first pioneered by Clemens \cite{Clemens} and Friedman \cite{Fri86, Friedman}.

\subsubsection{The generic nodal quintic}
Let $X_0$ denote the generic nodal quintic, which we recall has a single node, by Exercise~\ref{exercise: ODP}.  Let $\pi:\widehat{X} \rightarrow X_0$ be a small resolution and let $[C]\subset H_2(\widehat{X}), \mathbb{C})$ denote the class of the exceptional $\mathbb{P}^1$.  Since $X_0$ clearly admits a smoothing, Friedman's theorem implies that $[C]=0 \in H_2(X, \mathbb{C})$.  This immediately implies that $\widehat{X}$ is non-K\"ahler.  To see this, let $C=\del D$ and suppose we have a K\"ahler form $\omega$ on $\widehat{X}$.  Then by Wirtinger's inequality we get
\[
0<{\rm Vol}_{\omega}(C) = \int_{C}\omega = \int_{D} d\omega =0
\]
a contradiction.

\subsubsection{Schoen's rigid Calabi-Yau}
Consider the quintic 
\[
X_0:= \left\{ \sum_{i=0}^{4} z_i^5 - 5 \prod_{i=0}^{4}z_i\right\} \subset \mathbb{P}^4
\]
which has $125$ nodes.  Schoen \cite{Schoen} showed that $X_0$ admits a {\em projective} small resolution $\widehat{X}\rightarrow X_0$ having $h^{1,1}= 25$ and $h^{2,0}=h^{0,2}=0$.  The smoothing  $X_0 \leadsto X_t $ has $125$ vanishing cycles, and by Proposition~\ref{prop: topChange}, these span a $101$ dimensional space in $H_3(X_t, \mathbb{C})$.  On the other hand, this implies that $b_3(\widehat{X})= 2$.  Thus, $\widehat{X}$ has Hodge diamond
\[
\begin{array}{ccccccc}
\,&\,&\,&1&\,&\,&\, \\
\,&\,&0&\,&0&\,&\, \\
\,&0&\,&25&\,&0&\, \\
1&\,&0&\,&0&\,&1\\
\,&0&\,&25&\,&0&\,\\
\,&\,&0&\,&0&\,&\,\\
\,&\,&\,&1&\,&\,&
\end{array}
\]
This shows that $\widehat{X}$ is rigid, in the sense that it admits no nontrivial deformations at all.  Such manifolds are interesting since mirror symmetry would seem to suggest their mirrors are non-K\"ahler.

\subsubsection{The mirror quintic}\label{subsec: mirrorQuintic}

This example considers the mirror quintic, which is one of the first instances of mirror symmetry.  Consider the Dwork family~\eqref{eq: DworkFam}. For $\psi^5 \ne 1$, $X_{\psi}$ is smooth.   Let $\xi$ be a primitive $5$-th root of unity. The group
\[
G= \{(a_0,\dots, a_4) \in \mathbb{Z}_{5} : \sum_{i=0}^4 a_i=0\} /\mathbb{Z}_{5}
\]
acts on $\mathbb{P}^4$ by
\[
[Z_0:\ldots,:Z_4] \mapsto [\xi^a_0Z_0: \ldots:\xi^{a_4}Z_4]
\]
and this action preserves $X_{\psi}$.  Taking the quotient yields ($\psi^5 \ne 1$) an orbifold $X_{\psi}/G$.  When $\psi^5=1$, $X_{\psi}$ has $125$ disjoint nodal singularities which are permuted by $G$.  Thus, for $\psi^5=1$, $X_{\psi}/G$ is a Calabi-Yau orbifold with one nodal singularity.  It turns out \cite{Morrison2} that we can resolve the orbifold singularities of $X_{\psi}/G$ (simultaneously, for all $\psi$) and by doing so one obtains a family, the ``mirror quintic family"
\[
Y_{\psi} = \widetilde{X_{\psi}/G}
\]
where $\psi$ is smooth for $\psi^5\ne 1$, and has one node for $\psi^5=1$.  The Hodge diamond of the mirror quintic is given by
\begin{equation}\label{eq: HDMQ}
\begin{array}{ccccccc}
\,&\,&\,&1&\,&\,&\, \\
\,&\,&0&\,&0&\,&\, \\
\,&0&\,&101&\,&0&\, \\
1&\,&1&\,&1&\,&1\\
\,&0&\,&101&\,&0&\,\\
\,&\,&0&\,&0&\,&\,\\
\,&\,&\,&1&\,&\,&,
\end{array}
\end{equation}
Resolving the single nodal singularity of $Y_{1}$, we obtain a conifold transition
\[
\widehat{Y}_{1}\rightarrow Y_1 \leadsto Y_{\psi}
\]
As in the example of the generic quintic, the exceptional curve of the small resolution $\widehat{Y}_{1}\rightarrow Y_1$ is homologically trivial, by Theorem~\ref{thm: Friedman}, and hence $\widehat{Y}_1$ is non-K\"ahler.  Furthermore, by Proposition~\ref{prop: topChange}, the Hodge diamond of $\widehat{Y}_1$ is
\begin{equation}\label{eq: HDMQrigid}
\begin{array}{ccccccc}
\,&\,&\,&1&\,&\,&\, \\
\,&\,&0&\,&0&\,&\, \\
\,&0&\,&101&\,&0&\, \\
1&\,&0&\,&0&\,&1\\
\,&0&\,&101&\,&0&\,\\
\,&\,&0&\,&0&\,&\,\\
\,&\,&\,&1&\,&\,&,
\end{array}
\end{equation}
and so $\widehat{Y}_1$ is a rigid, non-K\"ahler Calabi-Yau $3$-fold, with $h^{1,1}=101$.

\subsubsection{The Tian-Yau Example}\label{subsec: LuTian}

The following manifold was first considered by Tian-Yau \cite{TianYau2}, and subsequently used by Lu-Tian \cite{LuTian} to construct an interesting conifold transition.  Define
\[
\begin{aligned}
\Gamma_1 &= \{\sum_{i=0}^{3} x_i^3 =0\} \subset \mathbb{P}^3\\
\Gamma_2 &= \{\sum_{i=0}^{3} y_i^3 =0\} \subset \mathbb{P}^3\\
H&= \{\sum_{i=0}^{3} x_iy_i=0\} \subset \mathbb{P}^3\times \mathbb{P}^3
\end{aligned}
\]
and let
\[
\widehat{X}:= (\Gamma_1 \times \Gamma_2) \cap H \subset \mathbb{P}^3 \times \mathbb{P}^3.
\]
$\widehat{X}$ is a smooth, simply connected Calabi-Yau $3$-fold, with Hodge diamond
\begin{equation}\label{eq: LuTianHodge}
\begin{array}{ccccccc}
\,&\,&\,&1&\,&\,&\, \\
\,&\,&0&\,&0&\,&\, \\
\,&0&\,&14&\,&0&\, \\
1&\,&23&\,&23&\,&1\\
\,&0&\,&14&\,&0&\,\\
\,&\,&0&\,&0&\,&\,\\
\,&\,&\,&1&\,&\,&,
\end{array}
\end{equation}
One can find (explicitly) $14$ disjoint $(-1,-1)$ rational curves $C_1,\ldots, C_{14}$ such that the homology classes $[C_i]$ span $H_2(M,\mathbb{C})$.  Furthermore, there exists a further $(-1,-1)$ rational curve $\gamma$, disjoint from $C_1,\ldots, C_{14}$ and such that
\[
[\gamma] = \sum_{i=1}^{14}\lambda_i [C_i] \quad \lambda_i \in \mathbb{C}^*
\]
for some $\lambda_i \in \mathbb{C}^*$. We can contract the 15 rational curves $C_1,\ldots, C_14, \gamma$ to obtain a nodal Calabi-Yau with $15$ ODP singularities, which is smoothable by Friedman's theorem.  This yields a conifold transition $\widehat{X}\rightarrow X_0 \leadsto X_t$.  By Proposition~\ref{prop: topChange}, $X_t$ has Hodge diamond
\[
\begin{array}{ccccccc}
\,&\,&\,&1&\,&\,&\, \\
\,&\,&0&\,&0&\,&\, \\
\,&0&\,&0&\,&0&\, \\
1&\,&24&\,&24&\,&1\\
\,&0&\,&0&\,&0&\,\\
\,&\,&0&\,&0&\,&\,\\
\,&\,&\,&1&\,&\,&\,
\end{array}
\]
By Wall's classification theorem, $X_t$ is diffeomorphic to $\#_{25}(S^3\times S^3)$.  In fact, subsequent work of Lu-Tian \cite{LuTian2} finds complex structures on $\#_{k}(S^3\times S^3)$ for all $k \geq 2$.

The above examples show two important phenomena:
\begin{itemize}
\item The property of a Calabi-Yau threefolds being K\"ahler is rather fragile.
\item By considering elementary examples we can construct many interesting Calabi-Yau threefolds with different topological types using conifold transitions.
\end{itemize}

\subsection{Mirror symmetry}

Mirror symmetry is a mysterious duality between different Calabi-Yau manifolds arising from string theory.  Mirror symmetry refers the ``symmetry" between the Hodge diamonds of mirror dual Calabi-Yau manifolds.  For Calabi-Yau threefolds this amounts to the statement that if $X, \check{X}$ are mirror dual Calabi-Yau threefolds, then
\[
h^{1,1}(X) = h^{2,1}(\check{X}) \qquad h^{2,1}(X) = h^{1,1}(\check{X}).
\]
Mirror symmetry for projective Calabi-Yau manifolds with $h^{2,1}(X)>0$ has been the subject of intense research over the past 30 years; see for example \cite{ClayMS, SYZ}.   However, for non-K\"ahler, or rigid Calabi-Yau manifolds, the situation is still rather mysterious.  These two situations are related, since if $X$ is a rigid Calabi-Yau manifold, so that $h^{2,1}(X)=0$, then necessarily the ``mirror manifold", $\check{X}$ must be non-K\"ahler.  One of the early applications of conifold transitions, as suggested in the physics literature (see e.g. \cite{GMS}), was to extend mirror symmetry to some non-K\"ahler and rigid Calabi-Yau manifolds.  The physics conjecture, due to Morrison \cite{Morrison}, states roughly that mirror symmetry reverses conifold transitions.  Namely, suppose
\[
X \rightarrow X_0 \leadsto Z
\]
is a conifold transition, and suppose that $\check{X}, \check{Z}$ are mirror to $X,Z$ respectively.  Then Morrison's conjecture asserts that $\check{X}$ and $\check{Z}$ are connected by a conifold transition
\[
\check{Z} \rightarrow Y_0 \leadsto \check{X}
\]
Let us pursue this idea to see what it might imply, particularly with respect to mirror symmetry for rigid and non-K\"ahler Calabi-Yau manifolds

\subsubsection{The Mirror Quintic, again}

Let us consider the example of the mirror quintic family $Y_{\psi}$ discussed in section~\ref{subsec: mirrorQuintic}.  Consider the degeneration
\[
Y_{\psi} \rightarrow Y_{1}
\]
where $Y_1$ has a single node.  Resolving this node yields a conifold transition
\[
\widehat{Y}_1\rightarrow Y_{1} \leadsto Y_{\psi}
\]
where the Hodge diamonds of $\widehat{Y}_{1}$, and $Y_{\psi}$ are given in~\eqref{eq: HDMQrigid} and ~\eqref{eq: HDMQ} respectively.  Our goal is to understand the mirror of $\widehat{Y}_1$, which we recall is non-K\"ahler, and rigid, in the sense the $h^{2,1}=0$.  Let us denote this manifold by $Z$.   Assuming Morrison's conjecture, $Z$ should satisfy
\[
X_{\psi}\rightarrow V\leadsto Z
\]
for some $V$.  We need to determined the number $k$ of rational curves contracted by $X_{\psi}\rightarrow V$.  Suppose that $k \geq 2$.  Since $h^{1,1}(X_{\psi},\mathbb{R})=1$ if we contract $k \geq 2$ rational curves they must satisfy Friedman's relation (since $h^{1,1}(X_{\psi})=1$) and hence $V$ is smoothable.  By Proposition~\ref{prop: topChange} we have
\[
\begin{aligned}
h^{1,1}(Z) = h^{1,1}(X_{\psi})-1 =0\\
h^{2,1}(Z) = h^{2,1}(X_{\psi}) + k-1= 100+k.
\end{aligned}
\]
To be consistent with mirror symmetry for Hodge numbers we need $h^{2,1}(Z) = h^{1,1}(\widehat{Y}_{1})=101$, and so $k=1$.  However, if we contract $k=1$ rational curves then $V$ is not smoothable, by Friedman's theorem.  In this case $b_2(Z)=0$, so $Z$ is not symplectic and the vanishing cycle $S^3\subset Z$ is homologically trivial. By Wall's classification \cite{Wall}, $Z= \# 102 (S^3\times S^3)$.  Note that Lu-Tian's result \cite{LuTian, LuTian2} implies that $Z$ admits a complex structure, but it is unclear whether this is the ``correct" complex structure, for the purposes of mirror symmetry.  In any case, $Z$ does not have a complex degeneration to $V$.


\section{The Web Of Calabi-Yau threefolds}\label{sec: web}

Simply connected, K\"ahler Calabi-Yau threefolds do not form a connected moduli space.  For example, the Fermat quintic considered in Section~\ref{sec: CY3}, and the Tian-Yau manifold of section~\ref {subsec: LuTian} are topologically distinct, as can be seen from their Hodge diamonds; see Lemma~\ref{lem: HodgeDiamondQuintic} and ~\eqref{eq: LuTianHodge}.  Reid \cite{Reid}, inspired in part by the work of Clemens \cite{Clemens} and Friedman \cite{Fri86}, has speculated that the ``the moduli space of $3$-folds with $K_X=0$ may nevertheless be irreducible".  This speculation, which has come to be known as Reid's Fantasy, is based on the idea that allowing conifold transitions and non-K\"ahler Calabi-Yau 3-folds to appear in our ``moduli space", we may pass between Calabi-Yau threefolds of different topological type.  Some motivation for this idea is provided by considering the classical case of moduli of $K3$ surfaces. 

\subsection{The moduli of $K3$ surfaces}

Consider the moduli space of {\em algebraic} $K3$ surfaces.  As we saw in Section~\ref{sec: CY3}, the moduli space of quartic hypersurfaces in $\mathbb{P}^3$ is $19$-dimensional.  On the other hand, consider  the complete intersection $K3$ surface
\[
X=\{P_2(Z_0,\ldots, Z_4=0\} \cap \{P_3(Z_0,\ldots, Z_4)=0\} \subset \mathbb{P}^4
\]
where $P_k$ are generic polynomials of degree $k=2,3$.  For generic choices $X$ is a smooth complete intersection, and $K_{X}\sim \mathcal{O}_{X}$ by adjunction. By the Lefschetz hyperplane theorem $X$ is a $K3$ surface.  Consider the line bundle $\mathcal{O}_{X}(1)$.  It is straightforward to compute that ${\rm deg}(\mathcal{O}_{X}(1))=6$.  On the other hand, it turns out that for a (very) general choice of the polynomials $P_2, P_3$, $X$ will have Picard rank $1$; this follows easily from the Torelli theorem, see e.g. \cite{HuybrechtsK3}.  Furthermore, $\mathcal{O}_{X}(1)$ is primitive since $6 \ne a^2 m$ for $a, m \in \mathbb{Z}_{>1}$. Thus, up to taking tensor powers, $\mathcal{O}_{X}(1)$ is the {\em only} line bundle on $X$; fix such a general choice of $X$.  We claim that $X$ cannot be embedded as a quartic in $\mathbb{P}^3$.  Suppose $X\hookrightarrow \mathbb{P}^3$.  Then $L= \iota^*\mathcal{O}_{\mathbb{P}^3}(1)$ is a line bundle  on $X$ and $L^2= 4$. But since $X$ has Picard rank $1$, and $\mathcal{O}_{X}(1)$ is primitive, this is impossible.

Now suppose we are interested in studying the moduli space of {\em algebraic} $K3$ surfaces.  We have seen that
\begin{itemize}
\item The moduli space of quartic hypersurfaces on $\mathbb{P}^3$ is $19$ dimensional.  
\item The moduli space of algebraic $K3$-surfaces containing the complete intersection of degree $(2,3)$ in $\mathbb{P}^4$ is not contained in the moduli space of quartic hypersurfaces in $\mathbb{P}^3$.
\end{itemize}

From these two examples we see that, at best, the moduli space of algebraic $K3$-surfaces is reducible.  In fact, by the Torelli theorem \cite{HuybrechtsK3}, algebraic $K3$-surfaces always lie in a $19$-dimensional moduli space of algebraic $K3$ surfaces.   The moduli space of algebraic $K3$ surfaces therefore appears to be extremely complicated, involving many different components possibly intersecting along lower dimensional strata.

The picture is significantly clarified by expanding our notion of moduli to include non-algebraic $K3$ surfaces.  If we adopt this point of view then Kodaira showed that the moduli space is a smooth manifold of complex dimension $20$.  The chaotic nature of the moduli of algebraic $K3$ surfaces reflects co-dimension $1$ phenomena in this larger moduli space.

\subsection{Reid's Fantasy for Calabi-Yau threefolds}

Consider the moduli space of K\"ahler Calabi-Yau threefolds.  As we have seen, this moduli space is not connected and contains representatives with different topological type.  On the other hand, conifold transitions allow us to pass between Calabi-Yau threefolds with different topological type. Following Reid we may ``fantasize" that the chaotic nature of the moduli of K\"ahler Calabi-Yau threefolds is due to their appearance as some lower dimensional subspace in a larger, more well-behaved moduli space of (not necessarily K\"ahler) Calabi-Yau threefolds. Below we give a formulation of Reid's Fantasy follow work of Gross \cite{Gross1, Gross2}.

Define a directed graph of Calabi-Yau threeolds as follows:
\begin{itemize}
\item A node in the graph corresponds to a deformation family of smooth, compact Calabi-Yau threefolds.
\item Given two nodes $\mathcal{M}_1, \mathcal{M}_2$ we draw an arrow
\[
\mathcal{M}_1 \rightarrow \mathcal{M}_2
\]
if, for the general member $X \in \mathcal{M}_1$ there is a conifold transition $\mathcal{M}_1 \ni X \rightarrow X_0 \leadsto X_t \in \mathcal{M}_2$.
\end{itemize}

\begin{conj}[Reid's Fantasy \cite{Reid}]
The graph of simply connected Calabi-Yau threefolds is connected.
\end{conj}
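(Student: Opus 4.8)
The plan is to prove connectedness not by a classification---none is available---but by exhibiting a large connected ``backbone'' inside the graph and then reducing an arbitrary simply connected Calabi--Yau threefold to it. The backbone is supplied by the work of Green--H\"ubsch \cite{GH1, GH2}: the complete intersection Calabi--Yau threefolds in products of projective spaces are all joined by conifold transitions, and this subgraph already contains the quintic of Section~\ref{sec: CY3} and the Tian--Yau example of Section~\ref{subsec: LuTian}. More generally one expects the toric hypersurface and toric complete intersection families to lie in a single connected component, so the first step is to show that every ``standard'' construction of Calabi--Yau threefolds lands in one explicit connected piece of the graph.

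The second step is to move an arbitrary family toward this backbone by navigating the lattice of topological invariants. By Proposition~\ref{prop: topChange}, a transition $\widehat{X} \rightarrow X_0 \leadsto X_t$ contracting $N$ curves changes the Hodge numbers by
\[
h^{1,1}(X_t) = h^{1,1}(\widehat{X}) - k, \qquad h^{2,1}(X_t) = h^{2,1}(\widehat{X}) + c, \qquad N = k+c,
\]
so a \emph{forward} transition strictly lowers $h^{1,1}$ while its inverse raises it. Following Gross \cite{Gross1, Gross2} one calls a family \emph{primitive} if it admits no conifold transition as the contracted side, i.e.\ it cannot be contracted and smoothed to a family of strictly smaller $b_2$; since each forward transition lowers $b_2$ by $k \geq 1$, every family reaches a primitive one after finitely many steps, and the conjecture reduces to connectedness \emph{among} primitive threefolds. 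The task of this step is thus to produce, from a general $X$, a degeneration to a nodal $X_0$ whose small resolution and whose smoothing---the latter guaranteed by Theorem~\ref{thm: Friedman} once Friedman's relation~\eqref{eq: FriedmanRelation} can be arranged---carry $X$ strictly closer, in $(h^{1,1}, h^{2,1})$, to the Green--H\"ubsch region.

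The hard part---indeed the reason this remains a conjecture---is that none of the ingredients of the reduction are known in general. There is no classification of Calabi--Yau threefolds, and it is not even known whether the pairs $(h^{1,1}, h^{2,1})$, or the number of deformation families, are bounded; without such boundedness one cannot exclude an infinite ``staircase'' of primitive families that never meets the backbone. Moreover, the examples of Section~\ref{sec: GlobalConifoldTrans} force non-K\"ahler nodes such as $\widehat{Y}_1$ into the web, so one must work with mixed K\"ahler and non-K\"ahler nodes and decide when a possibly non-K\"ahler nodal threefold admits a \emph{projective} smoothing compatible with Theorem~\ref{thm: Friedman}---a question with no general answer. My expectation is that the genuine obstacle is precisely boundedness of Calabi--Yau topology: a proof of Reid's Fantasy would have to pass through, or establish, finiteness statements that are themselves among the deepest open problems in the area, so any realistic program should settle the bounded-invariant case first and only then confront the possibility of escape to infinity.
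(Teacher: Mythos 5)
You were asked about a statement that the paper records as a \emph{conjecture}: Reid's Fantasy has no proof in the paper (nor anywhere else), and the paper's only substitute for a proof is the subsection on evidence, which cites precisely the results you invoke --- the Green--H\"ubsch/Wang theorem \cite{GH1, GH2, Wang} that complete intersection Calabi--Yau threefolds in products of projective spaces form a connected subgraph, and the Chiang--Greene--Gross--Kanter algorithm \cite{CGGK} for hypersurfaces in weighted projective space. Your proposal is correctly calibrated to this reality: it is a two-step program (connected backbone, then reduction of an arbitrary family to it) rather than a proof, and you explicitly identify why the reduction step is out of reach, namely the absence of any classification or boundedness statement for Calabi--Yau topology and the forced appearance of non-K\"ahler members such as $\widehat{Y}_1$. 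This matches the paper's own stance, including its formulation of the graph following Gross \cite{Gross1, Gross2}. One caveat on your backbone: the CGGK connectedness results are for general \emph{geometric} transitions, not conifold transitions, so they are evidence only for the weakened version of the conjecture that the paper mentions separately.

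There is, however, one concrete error in your reduction step. You claim that each forward transition lowers $b_2$ by $k \geq 1$, so that iterating terminates at a ``primitive'' family. This is false: by Proposition~\ref{prop: topChange}, $k$ is the dimension of the span of the contracted curve classes, and $k=0$ occurs exactly when all $[C_i]$ vanish in $H_2(\widehat{X},\mathbb{R})$. The paper's very first example in Section~\ref{sec: GlobalConifoldTrans}, the generic nodal quintic, is of this type --- indeed the vanishing of $[C]$ is precisely how the paper deduces that the small resolution $\widehat{X}$ is non-K\"ahler --- so transitions that do not change $b_2$ at all exist. What \emph{is} always true is $c = N - k \geq 1$: Friedman's relation~\eqref{eq: FriedmanRelation} demands a linear dependence among the $[C_i]$ with all coefficients nonzero, so the classes can never be independent, and every transition strictly increases $h^{2,1}$ on the smoothed side. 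The well-founded descent therefore runs along \emph{inverse} transitions, strictly decreasing $h^{2,1}\geq 0$, which is in fact how Gross's notion of primitivity is set up; your argument should be repaired accordingly. Finally, note that when the resolved side is non-K\"ahler, even the passage from a first-order smoothing to a genuine one is conditional: Theorem~\ref{thm: unobstructed} requires $\widehat{X}$ K\"ahler or the $\partial\bar{\partial}$-lemma, which reinforces (and makes precise) the last obstacle on your list.
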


We can further expand the notion of a conifold transition to a {\em geometric transition}, to obtain a weaker version of Reid's conjecture. Generally speaking, a geometric transition consists of a birational contraction $\pi:X \rightarrow X_0$, followed by a smoothing $X_0 \leadsto X_t$.  General geometric transitions for Calabi-Yau threefolds are well-studied in the mathematics literature, but are beyond the scope of these lecture notes.  We refer the reader to \cite{Rossi, Gross1, Gross2} and the references therein.


\subsection{Evidence for Reid's Fantasy}

The evidence for Reid's Fantasy is primarily experimental.  We refer the reader to the work of Green-H\"ubsch \cite{GH1, GH2}, Candelas-Green-H\"ubsch \cite{CGH}, Chiang-Greene-Gross-Kanter \cite{CGGK}, and more recently the work of Wang \cite{Wang}.  The main result of Green-H\"ubsch is the following

\begin{thm}[Green-H\"ubsch \cite{GH1}, Wang \cite{Wang}]
Any two complete intersection Calabi-Yau threefolds in a product of projective spaces are connected by a finite sequence of conifold transitions.
\end{thm}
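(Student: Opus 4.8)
The plan is to translate the problem into the combinatorics of \emph{configuration matrices} and to realize the elementary matrix operations geometrically as conifold transitions. Recall that a complete intersection Calabi--Yau (CICY) threefold in $\mathbb{P}^{n_1}\times\cdots\times\mathbb{P}^{n_m}$ is encoded by a matrix whose $(a,j)$ entry $d^a_j$ records the degree of the $j$-th defining polynomial in the $a$-th projective factor, subject to the Calabi--Yau constraint $\sum_j d^a_j = n_a+1$ for every $a$. Two operations act on such matrices: a \emph{splitting}, which introduces a new $\mathbb{P}^1$ row and replaces a single defining equation by a pair of equations linear in the new homogeneous coordinates $[s:t]$ whose resultant with respect to $[s:t]$ recovers the original equation, and its inverse, a \emph{contraction}. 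First I would verify that both operations preserve the Calabi--Yau constraint and, for generic coefficients, stay within the class of smooth projective CICY threefolds.

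The geometric content is that each splitting realizes a conifold transition. For example, the quintic $[\,\mathbb{P}^4 \mid 5\,]$ splits to $[\,\mathbb{P}^4\ \mathbb{P}^1 \mid \begin{smallmatrix}4&1\\1&1\end{smallmatrix}\,]$: the bidegree $(4,1)$ and $(1,1)$ equations $sQ_0+tQ_1=0$, $s\ell_0+t\ell_1=0$ cut out a manifold $\widehat{X}\subset \mathbb{P}^4\times\mathbb{P}^1$; projection to $\mathbb{P}^4$ collapses the $\mathbb{P}^1$ fiber exactly over the $16$ common zeros $\{\ell_0=\ell_1=Q_0=Q_1=0\}$, and the image is the nodal quintic $\{\ell_0 Q_1-\ell_1 Q_0=0\}$, whose smoothing is the smooth quintic. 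In general the split manifold is a small resolution whose exceptional set is a disjoint union of $(-1,-1)$-curves $C_i$ indexed by the common zeros of the coefficient polynomials, the contracted variety $X_0$ has only ordinary double points there, and the unsplit manifold is a smoothing of $X_0$. The verifications needed are that the degeneration locus consists only of ODPs and that the $C_i$ satisfy Friedman's relation~\eqref{eq: FriedmanRelation}; smoothability then follows from Theorem~\ref{thm: Friedman}, integrability of the first-order smoothing from Theorem~\ref{thm: unobstructed}, and the Hodge number bookkeeping from Proposition~\ref{prop: topChange}.

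With splitting understood as a conifold transition, connectivity of the web reduces to the purely combinatorial statement that every configuration matrix can be joined, through a finite chain of splittings and contractions, to a common reference. I would carry this out by repeatedly contracting so as to lower the Picard rank until the ambient space is a single projective factor. For threefolds the CICYs in a single $\mathbb{P}^n$ form the short list $[\,\mathbb{P}^4\mid 5\,]$, $[\,\mathbb{P}^5\mid 2\ 4\,]$, $[\,\mathbb{P}^5\mid 3\ 3\,]$, $[\,\mathbb{P}^6\mid 2\ 2\ 3\,]$, $[\,\mathbb{P}^7\mid 2\ 2\ 2\ 2\,]$, and one checks by a finite, explicit computation that these five are pairwise linked by conifold transitions. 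Transitivity of the relation ``connected by a finite sequence of conifold transitions'' then yields the theorem.

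The main obstacle is the reduction step. It is \emph{not} the case that an arbitrary configuration admits an immediate contraction to fewer factors while remaining a smooth CICY; one must sometimes \emph{split} first in order to create the structure that a later contraction can remove, and one must guarantee that the process terminates. Controlling this---ensuring that at each stage the intermediate nodal variety has only ordinary double points, that each splitting is \emph{effective} (the resulting matrix genuinely defines a CICY with a transverse intersection rather than a degenerate or non-reduced one), and that no configuration is trapped in an infinite loop---is the delicate combinatorial and algebro-geometric heart of the argument. This is precisely where the refinement of Wang~\cite{Wang} over the original analysis of Green--H\"ubsch~\cite{GH1} is needed to close the gaps.
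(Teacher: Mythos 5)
The paper does not actually prove this theorem: it is quoted as a result of Green--H\"ubsch \cite{GH1} and Wang \cite{Wang} in the section on evidence for Reid's Fantasy, so there is no internal proof to compare against. Your outline correctly reconstructs the framework of those cited works: configuration matrices with the Calabi--Yau row-sum condition, the splitting/contraction moves, and the realization of a splitting as a determinantal small contraction followed by a smoothing. Your quintic example is the standard one and is accurate --- the projection of the split manifold to $\mathbb{P}^4$ has image the determinantal quintic $\{\ell_0 Q_1 - \ell_1 Q_0 = 0\}$, with $\mathbb{P}^1$ fibers over the $16$ points $\{\ell_0=\ell_1=Q_0=Q_1=0\}$, which are nodes for generic coefficients. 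One simplification you should make: Theorem~\ref{thm: Friedman} and Theorem~\ref{thm: unobstructed} are not needed in this direction. The smoothing of $X_0$ is explicit, since the determinantal hypersurface is a special member of the unsplit family and the generic member of a pencil through it is smooth; Friedman's relation~\eqref{eq: FriedmanRelation} then holds automatically by the necessity direction of Theorem~\ref{thm: Friedman}, rather than being a hypothesis you must verify to obtain the smoothing.

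The genuine gap is that the actual content of the theorem --- the combinatorial connectivity --- is described but not proved. Your reduction scheme (contract down to a single projective factor, then link the five single-factor complete intersections) rests on exactly the two assertions you defer: (i) an algorithm, with a termination proof, producing for every configuration matrix a finite chain of \emph{effective} splittings and contractions ending at one of the five single-factor configurations, and (ii) control of the singularities of each intermediate contraction, since for general splittings (larger matrices, $\mathbb{P}^n$ rows with $n>1$) the determinantal degeneration locus need not consist only of ordinary double points --- this failure is precisely the flaw in the original Green--H\"ubsch argument that Wang's paper repairs. Appealing to \cite{GH1} and \cite{Wang} for these steps is circular, because they constitute the theorem being proved. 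As written, your proposal is a correct strategic summary of how the proof in the literature is organized, together with a correct verification of the base example, but it is not itself a proof.
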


Chiang-Greene-Gross-Kanter \cite{CGGK} studied the connectedness of Calabi-Yau complete intersections in toric varieties and described a general algorithm for determining whether these threefolds are connected by general geometric transitions.   This algorithm was applied to verify that all Calabi-Yau hypersurfaces in weighted projective four space are mathematically connected.  


\subsection{The vacuum degeneracy problem}

There are only four consistent string theories in $10$-dimensions: the type IIA/B theories, and two types of heterotic string theory.  In order to get a theory in four dimensions, one assumes that $6$ of the $10$ dimensions are compactified to be extremely small; that is, our $10$-dimensional space is of the form $\mathbb{R}^{1,3} \times X $ for some compact ``internal" $6$ manifold $X$.  If one assumes the theory to have no ``flux", then the internal space $X$ is Calabi-Yau \cite{CHSW}.  Unfortunately (or fortunately), compact Calabi-Yau $6$ manifolds are plentiful, thanks to Yau's theorem \cite{Yau78}.  This fact limits the predictive power of string theory, since in order to calculate some physical quantity, one needs to make a choice of the internal manifold $X$.  Even placing phenomenological restrictions on the Calabi-Yau manifold $X$ does not lead to a unique vacuum configuration; see e.g. \cite{CdlOHS}.  Green-H\"ubsch \cite{GH1, GH2} and Candelas-Green-H\"ubsch \cite{CGH} pioneered the idea that conifold transitions could unify string vacua through topology changing transitions.  Strominger \cite{Stromingerholes}, and Greene-Morrison-Strominger \cite{GMS} showed the for type II theories, conifold transitions could be made continuous at the level of string physics.

\section{Metric Aspects of Conifold Transitions}\label{sec: metric}

In this section we will describe the construction of explicit Ricci-flat K\"ahler metrics through a conifold transition.

\subsection{Ricci-flat K\"ahler metrics on the deformation family}
We construct an explicit family of metrics on the deformed conifold, which were discovered independently by Candelas- de la Ossa and Stenzel.
\[
V_t = \{ \sum_{i=1}^4 z_i^2=t\}
\]
for $t \in \mathbb{C}$.  Since $V_t$ is Stein, it has no nontrivial cohomology and so we may as well look for an exact Calabi-Yau metric.  That is, we look for a function $\phi_t: V_t \rightarrow \mathbb{R}$ such that
\[
\omega_{co,t} = \ddbar \phi_t >0, \qquad \omega_{co,t}^3 = \sqrt{-1}^{3^2}\Omega_t \wedge \overline{\Omega_t}.
\]
Observe that $V_t$ admits an action by $SO(4,\mathbb{C})$.  It is therefore natural to look for a function $\phi_t$ that is invariant under the compact real form $SO(4,\mathbb{R}) \subset SO(4,\mathbb{C})$.  The function
\[
\tau(z) = \|z\|^2
\]
is invariant under $SO(4, \mathbb{R})$, and hence we consider the ansatz
\begin{equation}\label{eq:  defAnsatz}
\phi_t(z) = f_t(\tau(z)).
\end{equation}
\begin{lem}
Under the ansatz~\eqref{eq: defAnsatz}, $\omega_{co,t}$ solves the Monge-Amp\`ere equation if and only if $f:=f_t$ satisfies
\begin{equation}\label{eq: cotODE}
\begin{aligned}
&\frac{df}{d\tau}>0,\quad \frac{4\tau}{|t|+\tau} \frac{df}{d\tau} + (\tau^2-|t|)\frac{d^2f}{d\tau^2} >0 \qquad \\\ 
&\left(\frac{df}{d\tau}\right)^3\tau + \left(\frac{df}{d\tau}\right)^2\frac{d^2f}{d\tau^2}\left(\tau^2-|t|^2\right) = c
\end{aligned}
\end{equation}
for $c \in \mathbb{R}_{>0}$.
\end{lem}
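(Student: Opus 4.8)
The plan is to use the $SO(4,\R)$-symmetry of $V_t$ to reduce the complex Monge-Amp\`ere equation to a scalar identity in the single radial variable $\tau$, and then to compute the relevant determinant by evaluating at one convenient point on each level set $\{\tau = \mathrm{const}\}$. First I would apply the chain rule to the radial ansatz $\phi_t = f_t(\tau)$: writing $\alpha := \ddbar\tau$ and $\beta := \sqrt{-1}\,\partial\tau\wedge\bar\partial\tau$, one has
\[
\omega_{co,t} = \ddbar f(\tau) = f'(\tau)\,\alpha + f''(\tau)\,\beta .
\]
Here $\alpha = \sqrt{-1}\sum_i dz_i\wedge d\bar z_i$ restricted to $V_t$ is precisely the K\"ahler form of the induced flat metric, while $\beta$ is a rank-one nonnegative $(1,1)$-form, so $\beta\wedge\beta = 0$ and hence $\omega_{co,t}^3 = (f')^3\,\alpha^3 + 3(f')^2 f''\,\alpha^2\wedge\beta$. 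Letting $\lambda := \|\partial\tau\|_\alpha^2$ be the unique nonzero eigenvalue of $\beta$ relative to $\alpha$, a short computation in an $\alpha$-orthonormal coframe adapted to $\partial\tau$ gives $\alpha^2\wedge\beta = \tfrac{\lambda}{3}\,\alpha^3$, so that $\omega_{co,t}^3 = (f')^2\big(f' + f''\lambda\big)\,\alpha^3$. In the same frame $\omega_{co,t}$ has eigenvalues $f'$ (with multiplicity two) and $f' + f''\lambda$, so $\omega_{co,t}>0$ is equivalent to $f'>0$ together with $f' + f''\lambda>0$; these yield the two positivity conditions.

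It then remains to identify the two $\tau$-dependent quantities $\lambda$ and $A := \alpha^3/(\sqrt{-1}^{\,9}\,\Omega_t\wedge\overline{\Omega_t})$. Since $\tau$, $\alpha$, $\beta$ and $\Omega_t\wedge\overline{\Omega_t}$ are all invariant under the real form $SO(4,\R)$, the functions $\lambda$ and $A$ are $SO(4,\R)$-invariant and therefore depend on $\tau$ alone; thus it suffices to compute them at one point of each sphere $\{\tau = \mathrm{const}\}$. After the isometric rotation $z\mapsto e^{-\sqrt{-1}\theta/2}z$, which preserves $\tau$ and carries $V_t$ to $V_{|t|}$, I may assume $t = |t|>0$ and work at the distinguished point
\[
z_* = \Big(\sqrt{\tfrac{\tau+|t|}{2}},\ \sqrt{-1}\,\sqrt{\tfrac{\tau-|t|}{2}},\ 0,\ 0\Big)\in V_{|t|}.
\]

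At $z_*$ I would compute directly. The coordinate $z_4$ degenerates there (indeed $z_4=0$ at $z_*$), so I use the chart solving $F = \sum z_i^2 - t$ for $z_1$, with holomorphic coordinates $(z_2,z_3,z_4)$ and $\Omega_t$ proportional to $dz_2\wedge dz_3\wedge dz_4/z_1$. Identifying $T_{z_*}V_t$ and the induced flat metric yields a diagonal metric $\mathrm{diag}(\tau/a^2,1,1)$ with $a^2 = (\tau+|t|)/2$; projecting the ambient gradient of $\tau$ onto $T_{z_*}V_t$ gives $\lambda = (\tau^2-|t|^2)/\tau$, and the $a^2$ factors cancel in $A$, leaving $A = \kappa\,\tau$ for an absolute constant $\kappa>0$. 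Substituting into $\omega_{co,t}^3 = c_0\,\sqrt{-1}^{\,9}\,\Omega_t\wedge\overline{\Omega_t}$ then gives
\[
\kappa\,(f')^2\big(f'\,\tau + f''(\tau^2-|t|^2)\big) = c_0,
\]
which is the stated ODE after renaming the constant, while the eigenvalue conditions $f'>0$ and $f'+f''\lambda>0$ (equivalently $\tau f' + (\tau^2-|t|^2)f''>0$, after clearing $\tau>0$) are the displayed positivity inequalities.

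The main obstacle is the honest evaluation at $z_*$: one must select the non-degenerate chart (the naive choice solving for $z_4$ fails exactly at $z_*$), correctly compute the induced metric and its determinant, carry out the orthogonal projection of $\partial\tau$ to extract $\lambda$, and—most delicately—track the constant relating the coordinate volume to $\sqrt{-1}^{\,9}\,\Omega_t\wedge\overline{\Omega_t}$ so as to confirm that $A$ is a genuine constant multiple of $\tau$ with no hidden $\tau$- or $t$-dependence. Once this bookkeeping is done, the $SO(4,\R)$-invariance guarantees the computed functions of $\tau$ are valid on all of $V_t$, and the equivalence stated in the lemma follows.
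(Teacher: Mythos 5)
Your proposal is correct and takes essentially the same route as the paper: exploit the $SO(4,\mathbb{R})$-invariance of the radial ansatz to reduce everything to an evaluation of $\ddbar f(\tau) = f'\,\ddbar\tau + f''\,\sqrt{-1}\,\del\tau\wedge\dbar\tau$ at one convenient point of each level set $\{\tau=\mathrm{const}\}$, diagonalize, and compare $\omega_{co,t}^3$ with $(\sqrt{-1})^{3^2}\,\Omega_t\wedge\overline{\Omega_t}$. The only cosmetic differences are that the paper first reduces to $t=1$ via the rescaling $S_{t^{-1/3}}$ and computes at a point with $z_4\neq 0$ in the chart $(z_1,z_2,z_3)$, whereas you rotate $t$ to $|t|$ and compute at the (coordinate-permuted) analogous point in the chart $(z_2,z_3,z_4)$; your values $\lambda=(\tau^2-|t|^2)/\tau$ and $A\propto\tau$ agree with the paper's computation, and your positivity condition $\tau f' + (\tau^2-|t|^2)f''>0$ matches the one actually derived in the paper's proof (the inequality displayed in the lemma statement appears to contain typographical errors).
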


\begin{proof}
We will prove the result for $t=1$ and then deduce the general case using the rescaling action.  Fix $R\geq 1$.  Since $SO(4,\mathbb{R})$ acts transitively on $V_1 \cap\{\|z\|^2=R^2\}$ we can assume that
\[
z_1 = \sqrt{-1}\sqrt{\frac{(R^2-1)}{2}}, \quad z_2=z_3=0,\quad z_4= \sqrt{\frac{1+R^2}{2}}
\]
From the defining equation of $V_0$ we have
\[
dz_4 = -\frac{z_1dz_1}{z_4}  
\]
and so
\[
\begin{aligned}
\ddbar \tau &= \left(1+\frac{|z_1|^2}{|z_4|^2}\right) \sqrt{-1}dz_1\wedge d\bar{z}_1 +  \sqrt{-1}dz_2\wedge d\bar{z}_2 + \sqrt{-1}dz_3\wedge d\bar{z}_3 \\
\sqrt{-1}\del \tau \wedge \dbar{\tau} &= (\bar{z}_1 - \frac{\bar{z}_4z_1}{z_4})(z_1 - \frac{z_4\bar{z}_1}{\bar{z}_4})\sqrt{-1}dz_1\wedge dz_1 \wedge 
\end{aligned}
\]
since $z_4 \in \mathbb{R}$ and $z_1 \in \sqrt{-1}\mathbb{R}$ we arrive at
\[
\sqrt{-1}\del \tau \wedge \dbar{\tau} = 4|z_1|^2\sqrt{-1}dz_1\wedge dz_1 \wedge. 
\]
We now compute
\[
\begin{aligned}
\ddbar f(\tau) &= \frac{d f}{d\tau} \ddbar \tau + \frac{d^2 f}{d\tau^2} \sqrt{-1}\del \tau \wedge \dbar \tau\\
&=\left( 4|z_1|^2\frac{d^2 f}{d\tau^2} +\left(1+\frac{|z_1|^2}{|z_4|^2}\right)\frac{d f}{d\tau}\right) \sqrt{-1}dz_1 \wedge d\bar{z}_1\\
& +\frac{df}{d\tau} \left( \sqrt{-1}dz_2\wedge d\bar{z}_2 + \sqrt{-1}dz_3\wedge d\bar{z}_3\right) \\
&=\left(2(\tau-1) \frac{d^2 f}{d\tau^2} + \left(\frac{2\tau}{1+\tau} \frac{d f}{d\tau}\right)\right)\sqrt{-1}dz_1 \wedge d\bar{z}_1\\
&+\frac{df}{d\tau} \left( \sqrt{-1}dz_2\wedge d\bar{z}_2 + \sqrt{-1}dz_3\wedge d\bar{z}_3\right).
\end{aligned}
\]
This formula defines a metric provided $f$ satisfies the first two conditions of ~\eqref{eq: cotODE}  (for $t=1$).  Next we can compute the volume form.  
\[
\left(\ddbar f(\tau)\right)^3 =2\cdot 3! \left(\frac{df}{d\tau}\right)^2\left((\tau-1) \frac{d^2 f}{d\tau^2} + \left(\frac{\tau}{1+\tau} \frac{d f}{d\tau}\right)\right)i^{3}  dz_1d\bar{z}_1dz_2d\bar{z}_2dz_3 d\bar{z}_3
\]
where we suppressed the wedge products.  In order to solve the complex Monge-Amp\`ere equation, we need
\[
\frac{\left(\ddbar f(\tau)\right)^3}{3!} =c (\sqrt{-1})^{3^2}\Omega \wedge\overline{\Omega}.
\]
This yields the equation
\[
 \left(\frac{df}{d\tau}\right)^2\left((\tau-1) \frac{d^2 f}{d\tau^2} + \left(\frac{4\tau}{1+\tau} \frac{d f}{d\tau}\right)\right) = \frac{c}{1+\tau}.
 \]
 Rearranging gives
 \[
  \left(\frac{df}{d\tau}\right)^2\left((\tau^2-1) \frac{d^2 f}{d\tau^2} + \tau\frac{d f}{d\tau}\right)  = c
  \]
  which is the desired result for $t=1$.
  
 For general $t$ we consider the rescaling action $S_{t^{-1/3}}: V_{t}\rightarrow V_1$.  Then, $S_{t^{-1/3}}^*\Omega_1= t\Omega_t$  and so  $f_t(\tau) = |t|^{-2/3}f_1(|t|^{-1}\tau)$ solves ~\eqref{eq: cotODE}.  
  \end{proof}

The particular choice of positive constant $c$ in~\eqref{eq: cotODE} is irrelevant; different choices of constant correspond to an overall scaling of the metric. Let us analyze the solution to the equation~\eqref{eq: cotODE}.  We consider first the case $t=0$, and make the convenient choice of constant $c=\frac{1}{6}$.  so that ~\eqref{eq: cotODE} reduces to
\[
\left(\frac{df}{d\tau}\right)^3\tau + \left(\frac{df}{d\tau}\right)^2\frac{d^2f}{d\tau^2}\tau^2= \frac{1}{6}
\]
 We make the change of variables $\tau= s^2$ and write
\[
\gamma(s) = s^2f'(s^2)
\]
where $f' = \frac{df}{d\tau}$.  Then
\[
\begin{aligned}
\frac{d}{ds} \gamma^3  &= 3\gamma^2 (2sf'(s^2) + 2s^3 f''(s^2))\\
& = 6s^3(s^2\left(f'(s^2)\right)^3 + s^4 \left(f'(s^2)\right)^2f''(s^2))\\
&=s^3
\end{aligned}
\]
and so $\gamma(s) = \left(\frac{s^4}{4}\right)^{1/3}$.  In other words $\tau \frac{df}{d\tau} = \left(\frac{\tau^2}{4}\right)^{1/3}$.
This equation can be integrated directly to obtain
\[
f(\tau) = \frac{3}{2\cdot 4^{1/3}}\tau^{2/3}.
\]
After rescaling we have
\begin{lem}
The metric $\omega_{co,0} = \ddbar \|z\|^{4/3}$ is an explicit Ricci flat metric on the conifold $V_0 =\{\sum_{i=1}^{4} z_i^2=0\}$.
\end{lem}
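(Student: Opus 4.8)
The plan is to deduce this lemma directly from the $t=0$ case of the preceding lemma, together with the freedom to rescale the constant $c$. First I would apply that lemma with $t=0$ and the convenient normalization $c=\frac{1}{6}$: under the $SO(4,\mathbb{R})$-invariant ansatz $\phi_0(z)=f(\|z\|^2)$, the complex Monge-Amp\`ere equation $\omega_{co,0}^3=c(\sqrt{-1})^{3^2}\Omega_0\wedge\overline{\Omega_0}$ is equivalent to the ODE~\eqref{eq: cotODE} at $t=0$, whose explicit solution was already obtained in the discussion immediately preceding the statement, namely $f(\tau)=\frac{3}{2\cdot 4^{1/3}}\tau^{2/3}$. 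Setting $\tau=\|z\|^2$ gives $\phi_0(z)=\frac{3}{2\cdot 4^{1/3}}\|z\|^{4/3}$, a positive constant multiple of $\|z\|^{4/3}$.

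Next I would discard the constant. Rescaling a K\"ahler potential by $\lambda>0$ rescales $\omega=\ddbar\phi$ by $\lambda$ and hence $\omega^3$ by $\lambda^3$, so $\omega_{co,0}=\ddbar\|z\|^{4/3}$ solves the same Monge-Amp\`ere equation with the adjusted constant $c'=\lambda^3 c>0$, where $\lambda=\frac{2\cdot 4^{1/3}}{3}$. As observed in the text, the precise value of $c>0$ is immaterial, so $\ddbar\|z\|^{4/3}$ is a genuine solution of the Calabi-Yau Monge-Amp\`ere equation on the smooth locus $V_0\setminus\{0\}$.

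I would then confirm that this is actually a metric, i.e.\ that $\ddbar\|z\|^{4/3}$ is positive definite away from the apex. Evaluating at a symmetric point exactly as in the previous proof, the $(1,1)$-form is diagonal with eigenvalue coefficients proportional to $f'(\tau)$ and to $f'(\tau)+\tau f''(\tau)$; for $f(\tau)=C\tau^{2/3}$ these equal $\frac{2C}{3}\tau^{-1/3}$ and $\frac{4C}{9}\tau^{-1/3}$, both strictly positive for $\tau>0$, so positivity holds on $V_0\setminus\{0\}$. Finally, Ricci-flatness is automatic: in a holomorphic frame the Ricci form equals $-\ddbar\log\big(\omega_{co,0}^3/(\Omega_0\wedge\overline{\Omega_0})\big)$ up to an additive constant form, and the Monge-Amp\`ere equation says precisely that this ratio is constant, whence $\operatorname{Ric}(\omega_{co,0})=0$. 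This is exactly the local mechanism behind Theorem~\ref{thm: Yau}.

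The one genuinely delicate point, which I would flag rather than resolve, is the behaviour at the apex $0\in V_0$, where $V_0$ itself is singular, the potential $\|z\|^{4/3}$ is merely H\"older continuous, and the metric degenerates. The assertion is therefore that $\omega_{co,0}$ is a smooth Ricci-flat K\"ahler metric on the smooth locus $V_0\setminus\{0\}$, which is exactly what the computation yields; the conical structure at the apex is not a defect but precisely the feature that makes this the model (exactly conical) metric to be exploited in the gluing constructions later.
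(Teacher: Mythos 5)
Your proposal is correct and follows essentially the same route as the paper: reduce to the ODE of the preceding lemma at $t=0$, integrate it to get $f(\tau)=\frac{3}{2\cdot 4^{1/3}}\tau^{2/3}$, and absorb the multiplicative constant by rescaling, since the value of $c>0$ is immaterial. Your added verifications (positivity of the eigenvalue coefficients $f'$ and $f'+\tau f''$, Ricci-flatness via $\mathrm{Ric}=-\ddbar\log\bigl(\omega^3/(\sqrt{-1})^{3^2}\Omega_0\wedge\overline{\Omega_0}\bigr)$, and the caveat that the metric lives on $V_0\setminus\{0\}$ with a conical apex) are left implicit in the paper but are accurate and welcome.
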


\begin{exercise}
Show that the Riemannian structure associated with $\omega_{co,0}$ is a metric cone.  That is, show that there is a function $r: V_0\rightarrow \mathbb{R}_{\geq 0}$ such that, on $V_0\setminus\{0\}$ we have
\[
dr^2+r^2g_{L}
\]
where $g_{L}$ is an Einstein metric with positive Ricci curvature on $L = V_0 \cap \{r=1\}$. 
\end{exercise}

Now let us consider the case of ~\eqref{eq: cotODE} in the case $t \ne 0$.  Using the rescaling action $S_{t^{-1/3}}:V_t \rightarrow V_1$ we can reduce to the case $t=1$, and let us write $f_1=f$ for simplicity.  Then, if we let $\mu(\tau) = \sqrt{\tau^2-1}$, ~\eqref{eq: cotODE} can be written as
\[
\left(\frac{df}{d\tau}\right)^2 \mu(\tau) \frac{d}{d\tau}\left(\mu(\tau)\frac{df}{d\tau}\right) = c  
\] 
or,
\[
 \frac{d}{d\tau}\left(\mu(\tau)\frac{df}{d\tau}\right)^3 = 3c\mu
 \]
This yields
\[
\begin{aligned}
\left(\mu(\tau)\frac{df}{d\tau}\right)^3 &= 3c \int \mu d\tau \\
&= \frac{3c}{2} \left(\tau \sqrt{\tau^2-1} -\log\left(\tau + \sqrt{\tau^2-1}\right)\right).
\end{aligned}
\]
Take $c=2/3$ for simplicity.  Solving for $\frac{df}{d\tau}$ yields
\begin{equation}\label{eq: cotfprime}
\frac{df}{d\tau} = \frac{1}{\sqrt{\tau^2-1}}\left(\tau \sqrt{\tau^2-1} -\log\left(\tau + \sqrt{\tau^2-1}\right)\right)^{\frac{1}{3}}.
\end{equation}
To integrate this expression we introduce $\lambda = \cosh^{-1}(\tau)$, so that $d\lambda =\frac{d\tau}{\sqrt{\tau^2-1}}$.  Then
\[
\begin{aligned}
\tau \sqrt{\tau^2-1} &=\cosh(\lambda)\sinh(\lambda) = \frac{1}{2}\sinh(2\lambda)\\
\log\left(\tau + \sqrt{\tau^2-1}\right)& = \log\left(\cosh(\lambda)+ \sinh(\lambda)\right) =\lambda
\end{aligned}
\]
so in the end we get
\[
f_1(\tau)=2^{-1/3}\int_0^{\cosh^{-1}(\tau)}\left(\sinh(2\lambda)-2\lambda\right)^{\frac{1}{3}} d\lambda.
\]
In general, we get

\begin{lem}\label{lem: cotMetric}
The function $f_t(\tau)$ solving~\eqref{eq: cotODE} is, up to an additive constant, given by
\[
f_{t}(\tau) = |t|^{-2/3}2^{-1/3}\int_0^{\cosh^{-1}(\frac{\tau}{|t|})}\left(\sinh(2\lambda)-2\lambda\right)^{\frac{1}{3}} d\lambda.
\]
For $\tau \gg |t|$, $f_t$ has an expansion
\[
f_t(\tau) = \frac{3/2}\tau^{2/3} + \tilde{c}_1 |t|^{2/3}\tau^{-4/3}\log(\frac{\tau}{|t|}) +\tilde{c}_2 |t|^{5/3}\tau^{-7/3} + o(|t|^{7/3}\tau^{-3}).
\]
\end{lem}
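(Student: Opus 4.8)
The plan is to handle the two assertions separately: the closed-form integral expression is essentially bookkeeping built on the previous lemma, while the asymptotic expansion is where the actual work lies. For the closed form, I would combine the rescaling relation $f_t(\tau) = |t|^{-2/3}f_1(|t|^{-1}\tau)$ established in the proof of the preceding lemma with the explicit integration of the $t=1$ equation. Recall that, with $c=2/3$ and the substitution $\lambda = \cosh^{-1}(\tau)$ — so that $\tau\sqrt{\tau^2-1} = \tfrac12\sinh(2\lambda)$ and $\log(\tau+\sqrt{\tau^2-1}) = \lambda$ — equation \eqref{eq: cotfprime} integrates to $f_1(\tau) = 2^{-1/3}\int_0^{\cosh^{-1}(\tau)}(\sinh(2\lambda)-2\lambda)^{1/3}\,d\lambda$. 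Feeding this into the rescaling relation and replacing the argument by $|t|^{-1}\tau$ in the limit of integration produces exactly the stated formula; no genuine difficulty arises here.

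For the expansion I would work in the regime where the upper limit $\Lambda := \cosh^{-1}(\tau/|t|)$ is large, using the elementary large-$\lambda$ expansion $(\sinh(2\lambda)-2\lambda)^{1/3} = 2^{-1/3}e^{2\lambda/3} - \tfrac{4}{3}2^{-1/3}\lambda e^{-4\lambda/3} + \cdots$, and integrate term by term from $0$ to $\Lambda$. The first term integrates to a multiple of $e^{2\Lambda/3}$, which, upon converting back through $e^{\Lambda} = (\tau/|t|)+\sqrt{(\tau/|t|)^2-1}$, yields the leading $\tau^{2/3}$ behaviour. The logarithmic correction is the crucial point: it comes from the $-2\lambda$ sitting inside the cube root (equivalently, the $\log(\tau+\sqrt{\tau^2-1})$ term in $f_1'$). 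Indeed $\int_0^{\Lambda}\lambda e^{-4\lambda/3}\,d\lambda$ converges as $\Lambda\to\infty$, but its residual boundary term is of size $(\Lambda+\text{const})e^{-4\Lambda/3}$, contributing a term of order $\Lambda e^{-4\Lambda/3}\sim \log(\tau/|t|)\,(\tau/|t|)^{-4/3}$ — precisely the source of the $\tau^{-4/3}\log(\tau/|t|)$ term. Alternatively one can expand $f_1'$ directly, writing $(\tau\sqrt{\tau^2-1}-\log(\tau+\sqrt{\tau^2-1}))^{1/3} = \tau^{2/3}\big(1 - \tfrac{1}{2}\tau^{-2} - \tau^{-2}\log(2\tau)+\cdots\big)^{1/3}$, multiplying by $(\tau^2-1)^{-1/2}$, and integrating, using integration by parts for the resulting $\tau^{-7/3}\log\tau$ term. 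After assembling the expansion of $f_1$, I would rescale and track powers of $|t|$, noting that $\log(\tau/|t|)$ appears automatically once the argument $\tau/|t|$ is substituted.

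The main obstacle is the asymptotic bookkeeping: isolating the logarithmic term cleanly, carrying the expansion far enough that the claimed error $o(|t|^{7/3}\tau^{-3})$ is meaningful, and — most importantly — showing that the remainder is genuinely of this order uniformly in the regime $\tau\gg|t|$. The latter should follow because the neglected contributions are exponentially small in $\Lambda$, hence power-law small in $\tau/|t|$; one must verify that each such tail, after the $|t|^{-2/3}$ rescaling, is dominated by $|t|^{7/3}\tau^{-3}$ exactly when $\tau/|t|\to\infty$. Finally, I would carefully re-verify the exact constants and the powers of $|t|$ attached to each term in the stated expansion against this computation, since the leading coefficient and the $|t|$-weights of the subleading terms are precisely where normalization conventions — the choice of the constant $c$ and the sign of the exponent in the rescaling $S_{t^{-1/3}}$ — can easily introduce discrepancies.
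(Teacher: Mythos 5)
Your proposal is correct. The closed-form half coincides with the paper's derivation: rescale by $S_{t^{-1/3}}$ to reduce to $t=1$, integrate \eqref{eq: cotODE} to obtain \eqref{eq: cotfprime}, and substitute $\lambda=\cosh^{-1}(\tau)$. For the asymptotics, the paper takes what you list as the \emph{alternative} route: it explicitly remarks that the expansion is more straightforward from \eqref{eq: cotfprime} than from the $\lambda$-integral, expands $f_1'(\tau)=\tau^{-1/3}+c_1\tau^{-7/3}\log\tau+c_2\tau^{-7/3}+O(\tau^{-13/3}\log^2\tau)$, and integrates, the $\tau^{-4/3}\log\tau$ term arising from the antiderivative of $\tau^{-7/3}\log\tau$. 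Your primary route --- expanding $(\sinh 2\lambda-2\lambda)^{1/3}=2^{-1/3}e^{2\lambda/3}-\tfrac{4}{3}2^{-1/3}\lambda e^{-4\lambda/3}+\cdots$ and integrating up to $\Lambda=\cosh^{-1}(\tau/|t|)$, with the logarithm produced by the tail $\int_\Lambda^\infty \lambda e^{-4\lambda/3}\,d\lambda=\bigl(\tfrac{3}{4}\Lambda+\tfrac{9}{16}\bigr)e^{-4\Lambda/3}$ --- is an equally valid variant: the two computations are related by the change of variable $\tau=\cosh\lambda$, and since $e^{\Lambda}\approx 2\tau/|t|$ they yield identical terms. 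Your route has the virtue of making the origin of the logarithm transparent (a boundary term of a convergent integral), while the paper's is slightly lighter on bookkeeping because it never touches the lower endpoint of the integral. On that point, one small caution for your version: the exponential expansion of the integrand is invalid near $\lambda=0$, where $\sinh 2\lambda-2\lambda$ vanishes to third order, so ``term-by-term from $0$ to $\Lambda$'' should be split at a fixed $\lambda_0$ with $\int_0^{\lambda_0}$ absorbed into a constant; this is harmless since the lemma is only claimed up to an additive constant, but it should be said. Your final check on the $|t|$-weights is exactly the needed one: $|t|^{-2/3}(\tau/|t|)^{-4/3}\log(\tau/|t|)=|t|^{2/3}\tau^{-4/3}\log(\tau/|t|)$ and $|t|^{-2/3}\,o\bigl((\tau/|t|)^{-3}\bigr)=o(|t|^{7/3}\tau^{-3})$, matching the statement.
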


The only thing we have not established is the asymptotics of $f_t$ as $\tau \rightarrow +\infty$.  This turns out to be more straightforward if we use~\eqref{eq: cotfprime} rather than the above expression for $f_t$.  Again, we consider only the case $f=f_1$, and obtain the general case by rescaling.  Expanding $\frac{df}{dt}$ for $t\gg 1$ we have
\[
\frac{df}{d\tau} = \tau^{-1/3} + c_1 \tau^{-7/3}\log(\tau) + c_2\tau^{-7/3} + O(\tau^{-13/3}\log(\tau)^2).
\]
Upon integration this yields the following estimate, which is somewhat wasteful in the error terms.
\[
f(\tau) = \frac{3}{2}\tau^{2/3} + \tilde{c}_1 \tau^{-4/3}\log(\tau) + \tilde{c}_2\tau^{-7/3} + o(\tau^{-3}).
\]
The term $\frac{3}{2}\tau^{2/3}$ encodes the conical Calabi-Yau metric on on the conifold $V_0$, while the lower order terms decay.  This shows that the metric $\omega_{co,1}$ is asymptotically conical with tangent cone at infinity being $V_0$ with the Ricci-flat K\"ahler metric $\omega_{co,0}$.  To make this completely rigorous one can use the map $\Phi_t$ defined in~\eqref{eq: PhitMap} to identify $\omega_{co,t}$ with a family of metrics on $V_0$ defined outside $\{\|z|^2 >\frac{|t|}{2}\}$.

Finally, let us remark that the $S^3 \subset V_t$ given by $\|z\|^2=t$ is clearly  Lagrangian with respect to the Calabi-Yau structure given by $\omega_{co,t}$, and so, by Lemma~\ref{lem: sLagMod} it is {\em special Lagrangian}.

\subsection{Ricci-flat K\"ahler metrics on the small resolution}

We now consider the manifold $\widehat{V}:= \mathcal{O}_{\mathbb{P}^1}(-1)^{\oplus 2}$.  This manifold contains a compact complex curve, given by the zero section of the bundle.  In particular, in order to construct a K\"ahler metric we must fix a choice of a K\"ahler form.  The natural choice is to take $\pi^*\omega_{FS}$, where $\omega_{FS}$ is the Fubini-Study K\"ahler form on $\mathbb{P}^1$ and $\pi: \widehat{V}\rightarrow \mathbb{P}^1$ is the natural projection.  We look for a Calabi-Yau metric of the form
\[
\omega_{co,a} = 4a^2 \pi^*\omega_{FS} + \ddbar \phi_a
\]
We are going to apply the same philosophy of symmetry reduction employed in the previous section. There is a natural symmetry group generated by the actions
\[
\begin{aligned}
(\mathbb{C}^*)^2 \ni (\lambda_1, \lambda_2) &\mapsto \{ (\xi_1, \xi_2) \mapsto (\lambda_1 \xi_1, \lambda_2 \xi_2)\}\\
\mathbb{Z}_{2} \ni (-1) &\mapsto \{ (\xi_1, \xi_2) \mapsto ( \xi_2, \xi_1)\}
\end{aligned}
\]
Let $G$ be the group of automorphisms generated.  Let $[X_1:X_2]\in \mathbb{P}^1$, and denote by $h_{FS} = |X_1|^2 + |X_2|^2$ denote the Fubini-Study metric on $\mathcal{O}_{\mathbb{P}^1}(-1)$.  This choice of metric splits $\mathbb{C}^* = S^1\times \mathbb{R}_{>0}$.  We look for a function $\phi$ which is invariant under the action of the compact group generate by $\mathbb{Z}_2$ and $(S^1)^2 \subset (\mathbb{C}^*)^2$.  Such a metric must have a potential of the form
\begin{equation}\label{eq: coaAnsatz}
\phi_a= f_a(\tau) \qquad \tau = (|X_1|^2+|X_2|^2)(|W_1|^2+|W_2|^2).
\end{equation}

We also remark that, by rescaling
\[
\omega_{co,a} = a^2S_{a^{-1}}^*\omega_{co,1} = 4\pi^*\omega_{FS} + \ddbar \left(a^2 f_1\left(\frac{\tau}{a^3}\right)\right)
\]
and hence we can take $f_a = a^2 f_1\left(\frac{\tau}{a^3}\right)$.  

\begin{lem}
Under the ansatz~\eqref{eq: coaAnsatz}, the metric $\omega_{co,a}$ is Calabi-Yau if $f_a= f_a(\tau)$ satisfies
\[
\begin{aligned}
&\frac{df_a}{d\tau}>0, \quad \frac{df_a}{d\tau} +\tau \frac{d^2f_a}{d\tau^2} >0\\
&(4a^2 + \tau\frac{df_a}{d\tau})\left(\left(\frac{df_a}{d\tau}\right)^2 + \tau\frac{df_a}{d\tau}\frac{d^2f_a}{d\tau^2}\right)= c
\end{aligned}
\]
for $c \in \mathbb{R}_{>0}$ a constant
\end{lem}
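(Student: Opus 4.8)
The plan is to mimic the symmetry reduction used in the deformation-family lemma: exhibit a group acting transitively on the level sets of $\tau$ and preserving all the relevant data, so that the Monge-Amp\`ere equation collapses to the stated ODE after evaluation at one convenient point. First I would observe that, beyond $G$, both the ansatz and the volume form $\sqrt{-1}^{3^2}\Omega\wedge\overline{\Omega}$ are invariant under the larger group generated by the base $SU(2)$ (acting as isometries of $\omega_{FS}$ and lifting to $\mathcal{O}_{\mathbb{P}^1}(-1)^{\oplus 2}$ via the tautological structure) together with the fibrewise $U(2)$ mixing the two summands, $(W_1,W_2)\mapsto A(W_1,W_2)^{t}$ for $A\in U(2)$. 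Indeed $\tau=(|X_1|^2+|X_2|^2)(|W_1|^2+|W_2|^2)$ is manifestly invariant under both, $\pi^*\omega_{FS}$ is invariant since these automorphisms cover isometries of the base, and $\Omega=-d\zeta\wedge dW_1\wedge dW_2$ transforms by $\det A$, so $\Omega\wedge\overline{\Omega}$ is unchanged because $|\det A|=1$. Since this enlarged group acts transitively on each level set $\{\tau=c\}$ minus the zero section, it suffices to check positivity and the equation at the single point $P$ with $\zeta=0$, $W_1=s$, $W_2=0$, where $\tau=s^2$.

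Then I would carry out the local computation in the chart $U_2=1$, with coordinates $(\zeta,W_1,W_2)$ and $\tau=(1+|\zeta|^2)(|W_1|^2+|W_2|^2)$. A direct calculation at $P$ gives $\pi^*\omega_{FS}=\sqrt{-1}\,d\zeta\wedge d\bar\zeta$, while the cross terms in $\ddbar\tau$ and $\sqrt{-1}\del\tau\wedge\dbar\tau$ drop out because $\zeta=0$ and $W_2=0$, leaving $\ddbar\tau|_P=\sqrt{-1}\big(\tau\,d\zeta\wedge d\bar\zeta+dW_1\wedge d\bar W_1+dW_2\wedge d\bar W_2\big)$ and $\sqrt{-1}\del\tau\wedge\dbar\tau|_P=\tau\,\sqrt{-1}\,dW_1\wedge d\bar W_1$. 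Using $\ddbar f(\tau)=f'\ddbar\tau+f''\sqrt{-1}\del\tau\wedge\dbar\tau$, the metric becomes diagonal at $P$:
\[
\omega_{co,a}|_P=\sqrt{-1}\Big\{(4a^2+\tau f')\,d\zeta\wedge d\bar\zeta+(f'+\tau f'')\,dW_1\wedge d\bar W_1+f'\,dW_2\wedge d\bar W_2\Big\}.
\]
Positivity of this diagonal form is exactly the requirement that the three eigenvalues $4a^2+\tau f'$, $f'+\tau f''$ and $f'$ be positive; the first is automatic once $f'>0$, so the two stated inequalities suffice.

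For the equation, raising the diagonal form to the third power yields $\omega_{co,a}^3|_P=3!\,(4a^2+\tau f')\,f'\,(f'+\tau f'')\,\Pi$, where $\Pi=(\sqrt{-1}\,d\zeta\wedge d\bar\zeta)(\sqrt{-1}\,dW_1\wedge d\bar W_1)(\sqrt{-1}\,dW_2\wedge d\bar W_2)$. I would then identify $\Omega$ in this chart by pulling back the residue form $\frac{dx\wedge dz\wedge dw}{x}$ through the resolution map $(x,y,z,w)=(\zeta W_1,W_2,\zeta W_2,W_1)$, obtaining $\Omega=-d\zeta\wedge dW_1\wedge dW_2$, so that $\sqrt{-1}^{3^2}\Omega\wedge\overline{\Omega}=\Pi$. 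Matching the two sides and absorbing the constant $3!$ (and the overall normalization of the Monge-Amp\`ere equation) into $c$ gives $(4a^2+\tau f')\big((f')^2+\tau f' f''\big)=c$, which is the asserted equation after writing $f'(f'+\tau f'')=(f')^2+\tau f'f''$.

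I expect the only genuinely delicate points to be (i) justifying the reduction to a single point, i.e.\ verifying that the enlarged symmetry group really preserves $\sqrt{-1}^{3^2}\Omega\wedge\overline{\Omega}$ and acts transitively on the level sets of $\tau$, and (ii) pinning down $\Omega$ and the sign/normalization conventions so that the two invariant volume forms are compared correctly; the differentiations themselves are routine once $P$ is chosen to annihilate the off-diagonal terms. As an alternative to working at general $a$, one could instead verify the case $a=1$ and deduce the rest from the rescaling identity $\omega_{co,a}=a^2 S_{a^{-1}}^*\omega_{co,1}$ recorded above, substituting $f_a=a^2 f_1(\tau/a^3)$.
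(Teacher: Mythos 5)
Your proposal is correct and takes essentially the same route as the paper: symmetry reduction under the ansatz $\phi_a = f_a(\tau)$, evaluation of $f_a'\,\ddbar\tau + f_a''\,\sqrt{-1}\del\tau\wedge\dbar\tau$ at a conveniently chosen point, and comparison of $\omega_{co,a}^3$ with $\sqrt{-1}^{3^2}\Omega\wedge\overline{\Omega}$. The differences are refinements rather than a new argument: the paper fixes only the base point (taking $X_2=0$) and then computes the determinant of the fiber block including its off-diagonal terms, whereas you additionally invoke the fibrewise $U(2)$ symmetry to set $W_2=0$ so that the metric is diagonal at the chosen point, and you spell out the identification of $\sqrt{-1}^{3^2}\Omega\wedge\overline{\Omega}$ with the coordinate volume form, which the paper leaves implicit.
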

\begin{proof}
Suppose we work on the patch $\{X_1=1\}$, the other case being identical.  Furthermore, by a linear transformation we may assume $X_2=0$.  Writing $f=f_a$ for simplicity  we compute
\[
\ddbar f = f' \ddbar\tau + f'' \sqrt{-1}\del\tau \wedge \dbar{\tau}.
\]
Since we are working at the point $(1,0)$ we have
\[
\begin{aligned}
\tau &=|W_1|^2+|W_2|^2\\
\del \tau &= \overline{W}_1dW_1 + \overline{W}_2 dW_2\\
\ddbar \tau &= \pi^*\omega_{FS} (|W_1|^2+|W_2|^2) + \sqrt{-1}(dW_1\wedge d\overline{W}_1 + dW_2\wedge d\overline{W}_2).
\end{aligned} 
\]
It follows that
\[
\begin{aligned}
\sqrt{-1}\del \tau \wedge \dbar \tau &= |W_1|^2 \sqrt{-1}dW_1\wedge d\overline{W}_1 + |W_2|^2\sqrt{-1}dW_2 \wedge d\overline{W}_2 \\
&+ 2{\rm Re}\left(\sqrt{-1}W_2\overline{W}_1 dW_1\wedge d\overline{W}_2 +  W_1 \overline{W}_2dW_2\wedge d\overline{W}_1 \right)
\end{aligned}
\]
and so
\[
\begin{aligned}
\omega_{co,a} &= (4a^2 + f'\tau)\pi^*\omega_{FS} + f'\sqrt{-1}(dW_1\wedge d\overline{W}_1 + dW_2\wedge d\overline{W}_2\\
&+ f''\sqrt{-1}\del\tau \wedge \dbar{\tau}.
\end{aligned}
\]
Computing the wedge product yields
\[
\omega_{co,a}^3 = 3!(4a^2 + f'\tau)\left((f')^2 +f'' f'\tau\right) \pi^*\omega_{FS}\wedge \sqrt{-1}(dW_1\wedge d\overline{W}_1\wedge \sqrt{-1}dW_2\wedge d\overline{W}_2\
\]
This metric will be Calabi-Yau if
\[
(4a^2 + f'\tau)\left((f')^2 +f'' f'\tau\right)= c
\]
for $c\in \mathbb{R}>0$ a constant.
\end{proof}

Let us now investigate the solution of this equation.  Let $\gamma = \tau f'(\tau)$.  Then the equation can be rewritten as solves
\[
(4a^2 + \gamma )\gamma \gamma' = c\tau
\]
which admits a first integral
\[
2a^2\gamma^2+\frac{1}{3}\gamma^3=\frac{c}{2}\tau^2
\]
Choosing $c=\frac{2}{3}$ yields
\[
6a^2\gamma^2+\gamma^3=\tau^2
\]
As before, using the rescaling action we may assume that $a=1$.  This equation admits the solution
\[
\gamma(\tau) = -2 + z+ \frac{4}{z}, \qquad z= 2^{-1/3}(-16+\tau^2+\sqrt{-32\tau^2+\tau^4})^{1/3}
\]
We remark that the function $z(\tau)$ becomes complex when $\tau$ becomes small.  However, one can check that the expression for $\gamma(\tau)$ remains well-defined.  Indeed, if $z$ is complex then
\[
|z|^2 = 2^{-2/3}(256)^{\frac{1}{3}} = 4
\]
and so, for $\tau^2<32$ we have
\[
\gamma(\tau) = -2 + z+ \frac{4}{z}= -2+z+4\frac{\bar{z}}{|z|^2} = -2+z+\bar{z} = -2+2{\rm Re}(z).
\]
Thus, in general we have
\[
f_1(\tau) = \int_0^{\tau} \frac{1}{s} \gamma(s) ds.
\]
and one can check that, as $\tau \rightarrow 0$, $f_1' \rightarrow \frac{1}{\sqrt{6}}$.

Let us now extract the leading order asymptotics for $f$ as $\tau \rightarrow +\infty$. We do this by extracting the leading order asymptotics of $\tau^{-1}\gamma(\tau)$ and then integrating term by term.  We have
\[
\tau^{-1}\gamma(\tau)= \tau^{-1/3} -\frac{2}{\tau} +4\tau^{-5/4} + O(\tau^{-7/3})
\]
and so
\[
f_1(\tau) = \frac{3}{2}\tau^{2/3} -2\log(\tau) + O(\tau^{-1/4})
\]
Again we see that the leading order behvaior is given by the function $\frac{3}{2}\tau^{2/3}$, which defines the Calabi-Yau metric on $V_0$.  On the other hand, unlike the case of the of smoothed conifold, the subleading order term in the expansion of $f_1$ does not decay.   Summarizing, we have
\begin{lem}\label{lem: coaMetric}
Under the ansatz~\eqref{eq: coaAnsatz}, the Calabi-Yau metric on the resolved conifold, lying in the cohomology class $4a^2\pi^*[\omega_{FS}] \in H^{1,1}(\widehat{V},\mathbb{R})$ is given by
\[
f_a(\tau) = a^2\int_0^{a^{-3}\tau} \frac{1}{s}\gamma(s) ds
\]
where
\[
\gamma(s) = -2 + z+ \frac{4}{z}, \qquad z= 2^{-1/3}(-16+s^2+\sqrt{-32s^2+s^4})^{1/3}.
\]
Furthermore, $f_a(\tau)$ admits an expansion for $\tau \gg a^3$
\[
f_a(\tau) =\frac{3}{2}\tau^{2/3} -2a^2\log(a^{-3}\tau) + O(a^2\tau^{-1/4}).
\]
In particular, we see that $f_{a}(\tau) \rightarrow \frac{3}{2}\tau^{2/3}$ smoothly on compact sets as $a\rightarrow 0$.
\end{lem}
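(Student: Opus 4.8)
The plan is to assemble the ingredients already in place before the statement: the reduction of the Calabi-Yau condition to the first integral $6a^{2}\gamma^{2}+\gamma^{3}=\tau^{2}$ with $\gamma=\tau f_a'(\tau)$ (the constant having been fixed by $c=2/3$), and the rescaling identity $f_a(\tau)=a^{2}f_1(a^{-3}\tau)$. First I would confirm that the displayed closed form for $\gamma$ is the correct root: applying Cardano's formula to $\gamma^{3}+6\gamma^{2}=s^{2}$ (the case $a=1$) and checking that the chosen branch is the unique real solution with $\gamma(s)\geq 0$ for $s\geq 0$ and $\gamma(0)=0$, including the verification that the expression remains real and well defined for $s^{2}<32$, exactly as indicated in the text. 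Since $6\gamma^{2}\approx s^{2}$ near $s=0$ gives $\gamma(s)\sim s/\sqrt{6}$, the integrand $\gamma(s)/s$ is bounded at the origin, so $f_1(\tau)=\int_0^{\tau}s^{-1}\gamma(s)\,ds$ converges and fixes the additive constant. The integral formula for $f_a$ then follows at once from the rescaling identity together with the substitution $s\mapsto a^{-3}s$.

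The substantive step is the large-$\tau$ expansion, which I would extract directly from the cubic rather than from the integral. Setting $\gamma=s^{2/3}w$ in $\gamma^{3}+6\gamma^{2}=s^{2}$ gives $w^{3}+6s^{-2/3}w^{2}=1$, which I would invert perturbatively in the small parameter $s^{-2/3}$ to obtain $w=1-2s^{-2/3}+O(s^{-4/3})$, hence
\[
\frac{\gamma(s)}{s}=s^{-1/3}-2s^{-1}+O\!\left(s^{-5/3}\right).
\]
Integrating term by term yields $f_1(\tau)=\frac{3}{2}\tau^{2/3}-2\log\tau+O(\tau^{-2/3})$, and inserting this into $f_a(\tau)=a^{2}f_1(a^{-3}\tau)$ produces the stated expansion for $\tau\gg a^{3}$: the leading term scales to $\frac{3}{2}\tau^{2/3}$, the logarithm contributes $-2a^{2}\log(a^{-3}\tau)$, and the remainder is dominated by the (deliberately non-sharp) bound $O(a^{2}\tau^{-1/4})$. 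The point demanding care here is uniformity: I must check that the perturbative inversion of the cubic carries an honest remainder valid throughout the range $s\geq a^{-3}\tau_0$, which I would secure by an implicit-function/monotonicity argument showing $w(s)$ is a smooth function of $s^{-2/3}$ near $0$.

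Finally, for the convergence $f_a(\tau)\to \frac{3}{2}\tau^{2/3}$ as $a\to 0$, I would fix $\tau$ in a compact subset of $(0,\infty)$, so that $a^{-3}\tau\to\infty$ and the expansion applies; the correction $-2a^{2}\log(a^{-3}\tau)=-2a^{2}\log\tau+6a^{2}\log a$ and the remainder $O(a^{2}\tau^{-1/4})$ then both tend to $0$. To upgrade pointwise convergence to smooth convergence on compact sets, I would differentiate: from $f_a^{(k)}(\tau)=a^{2-3k}f_1^{(k)}(a^{-3}\tau)$ and the expansion of $f_1$ carried to order $k$ (equivalently, by repeatedly differentiating the algebraic relation for $\gamma$), each derivative converges locally uniformly on $(0,\infty)$ to the corresponding derivative of $\frac{3}{2}\tau^{2/3}$. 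The main obstacle throughout is precisely this asymptotic analysis of the implicitly defined $\gamma$: converting the algebraic cubic into an expansion with uniform error control, and ensuring that this control survives both integration and repeated differentiation.
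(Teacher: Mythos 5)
Your proposal is correct and follows essentially the same route as the paper: the first integral $6a^{2}\gamma^{2}+\gamma^{3}=\tau^{2}$ with $\gamma=\tau f'$, the explicit Cardano root together with the reality check for $s^{2}<32$, the rescaling identity $f_a(\tau)=a^{2}f_1(a^{-3}\tau)$, and term-by-term integration of the large-$\tau$ expansion of $\gamma(s)/s$. The only differences are cosmetic: you obtain the expansion by perturbatively inverting the cubic (setting $\gamma=s^{2/3}w$) rather than expanding the closed-form root, and you make explicit the differentiation argument for smooth convergence on compact sets that the paper leaves implicit; as a byproduct, your remainder $O(s^{-5/3})$ confirms that the paper's printed term $4\tau^{-5/4}$ is a typo for $4\tau^{-5/3}$.
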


An immediate consequence of Lemma~\ref{lem: cotMetric} and Lemma~\ref{lem: coaMetric} is that 
\[
(\widehat{V}, \omega_{co,a}) \xlongrightarrow{a\rightarrow 0} (V_0, \omega_{co,0}) \xlongleftarrow{t \rightarrow 0} (V_t, \omega_{co,t})
\]
where the convergence is in the sense of Gromov-Hasudorff.  In particular, we see that, as Calabi-Yau manifolds, the conifold transition is continuous.

\section{Geometrizing Calabi-Yau manifolds through conifold transitions}\label{sec: geomConifold}

A natural approach to understanding Reid's fantasy is to put some kind of canonical metric on the varieties appearing in the web of Calabi-Yau threefolds.  Existence of canonical differential-geometric structures can be a powerful tool for probing the algebraic and topological properties of the underlying space.  But what kind of canonical metric should we consider?  If $Y$ is a K\"ahler Calabi-Yau 3-folds then there is a clear candidate: any of the Ricci-flat K\"ahler metrics produced by Yau's theorem (see Theorem~\ref{thm: Yau}).  On the other hand, as we have seen, Reid's fantasy necessitates passing to non-K\"ahler Calabi-Yau threefolds; how should we ``uniformize" these objects? 

There are many possible answers to this question.  One could look for hermitian metrics whose Chern connections have constant scalar curvature \cite{ACS}, or vanishing Chern-Ricci curvature \cite{STW,TW, TW2}, or for balanced metrics with vanishing Chern-Ricci curvature \cite{Fei2, GiustiSpotti}.  The point of view we shall pursue, as suggested by Yau, is to look for solutions of the heterotic string vacuum equations.  This is well motivated by the string theory literature and connections between Reid's Fantasy and the ``vacuum degeneracy problem" of string theory (see Remark~\ref{rk: heteroticVDP} below). 

The {\em heterotic string system}, or HS system is a set of equations for a Calabi-Yau threefold $X$ equipped with a non-vanishing holomorphic $(3,0)$ form $\Omega$ and a holomorphic vector bundle $E\rightarrow X$.  For string compactifications with zero flux, the vacuum equations were investigated in a celebrated paper of Candelas-Horowitz-Strominger-Witten \cite{CHSW}.  The case of string compactifications with flux was considered independently by Strominger \cite{Strominger} and Hull \cite{Hull}.  The equations of motion seek a hermitian metric $g$ on $T^{1,0}X$, with associated $(1,1)$-form $\omega$ and a hermitian metric $H$ on the gauge bundle $E$ such that  
\begin{equation} \label{eq: introHSbal}
d(\|\Omega\|_{\omega} \, \omega^2) =0,
\end{equation}
\begin{equation}\label{eq: introHSYM}
\omega^2 \wedge  F_{H}=0,
\end{equation}
\begin{equation}\label{eq: introAnom}
\ddbar \omega - \frac{\alpha'}{4}\left({\rm Tr} Rm_g\wedge Rm_g  - {\rm Tr} F_{H}\wedge F_{H}\right)=0.
\end{equation}
where $F_{H}$ denotes the curvature of the Chern connection of $(E,H)$, $\|\Omega\|^2_{\omega}$ is the norm of $\Omega$ with respect to $g$, and $\alpha'>0$ is a constant.  In the mathematics and physics literature there are several choices of connection which are commonly used to define the curvature $Rm_g$ in~\eqref{eq: introAnom} (see e.g. \cite{PicardMcOrist, Ivanov, LY05, GF-survey}). A common choice, and the one we shall adopt here, is to use the Chern connection (see \cite{PicardMcOrist} for recent working analyzing the compatibility of this choice with supersymmetry).  Equation~\eqref{eq: introHSbal} is called the ``conformally balanced equation", following the work of Michelsohn \cite{Michelsohn}. Equation~\eqref{eq: introHSYM}  is the familiar Hermitian-Yang-Mills equation.  Equation ~\eqref{eq: introAnom} is called the ``anomaly cancellation condition".  Clearly we must impose the conditions
\begin{equation}\label{eq: chernClassHS}
\begin{aligned}
c_1(E) &=0 \in H^{1,1}_{BC}(X, \mathbb{R}) \\
c_2(E) &= c_2(X) \in H^{2,2}_{BC}(X,\mathbb{R})
\end{aligned}
\end{equation}
as dictated by~\eqref{eq: introHSYM} and~\eqref{eq: introAnom} (here $H^{p,q}_{BC}$ denotes the Bott-Chern cohomology).

 The HS system is an extension of the K\"ahler Ricci-flat geometry of Yau's theorem to the non-K\"ahler setting.  Indeed, suppose $X$ is K\"ahler, and let $E=T^{1,0}X$.  Let $g$ be the Calabi-Yau metric produced by Yau, and let $H=g$.  Then, from the complex Monge-Amp\`ere equation we have $|\Omega|_{\omega}= {\rm const}$, so that~\eqref{eq: introHSbal} is equivalent to $d\omega \wedge \omega =0$, which is automatically satisfied thanks to the K\"ahler assumption.  Equation~\eqref{eq: introHSYM} is satisfied since
 \[
 \omega^2 \wedge F_{H} \propto {\rm Ric}(\omega) =0
 \]
Finally, if we take the Chern connection in the anomaly cancellation equation~\eqref{eq: introAnom}, then we have
\[
\ddbar \omega =0, \quad \left(\Tr Rm_g\wedge Rm_g  - \Tr F_{H}\wedge F_{H}\right)=0.
\]
since $Rm_g= F_H$. Thus the HS system can be viewed as providing a natural extension of the powerful theory of Calabi-Yau geometry to the non-K\"ahler context.  

There has recently been a great deal of interest in understanding the existence and uniqueness of solutions to the HS system.  The first solutions were constructed by Li-Yau \cite{LY05} as perturbations of K\"ahler Calabi-Yau solutions.  The first solutions on non-K\"ahler backgrounds were constructed by Fu-Yau \cite{FY}.  Further constructions on K\"ahler backgrounds were carried out in \cite{AGF, AGF2, CPY2}, and under various symmetry/fibration assumptions on the background geometry \cite{Fei, FHP, FIUV, FIUV2, FGV, FTY, OUV}.  The HS system also has deep and surprising connections with generalized geometry and the theory of string algebroids and higher gauge theory; see e.g. \cite{AGS, GRT, GRST, GM, GFMS}.  Parabolic approaches have been pioneered by Phong-Picard-Zhang \cite{PPZ18b, PPZ18a, PPZ18, PPZ19, FeiPhong, FeiPic}, based on the notion of an {\em Anomaly Flow}.  An alternative parabolic approach, based on extensions of the Streets-Tian pluriclosed flow to higher gauge theory has recently been pioneered by Garcia-Fernandez-Molina-Streets \cite{GFMS}.  We refer the reader to the survey articles \cite{GF-survey, PhongSurvey, PicardSurvey1, PicardLNS, PicardSurvey2}  and the references therein.


\begin{rk}\label{rk: heteroticVDP}
    The role of conifold transitions for resolving the ``vacuum degeneracy problem" is best understood in the setting of type II theories; see e.g. \cite{Stromingerholes, GMS}.  For theories with less supersymmetry, like the heterotic string, the situation is complicated by the presence of the gauge bundle.  For some recent progress towards understanding the role of conifold transitions in the unification of the heterotic string vacua, see \cite{CdlOHS, ABG} and the references therein.  One could therefore wonder whether it is more appropriate to use the equations of motion for the type IIA or type IIB string \cite{GMPT, Tomasiello, TsengYau} as a tool for geometrizing non-K\"ahler Calabi-Yau manifolds.  Entertaining this for the moment, we can immediately discard the type IIA string, since conifold transitions can produce Calabi-Yau manifolds with $b_2=0$, and hence no symplectic structure.  For the type IIB string, non-K\"ahler solutions necessarily have sources, which can be localized on calibrated submanifolds (D-branes and O-planes), or ``smeared".  We have essentially no non-trivial examples of compact manifolds with solutions of the type IIB equations with non-trivial sources.  In the few examples of type IIB backgrounds with sources that are understood, the background metric changes signature in an open neighborhood of the O-planes.    
\end{rk}

The existence of solutions to the HS system through conifold transitions is an area of active research. Roughly speaking, one would like to say that the Candelas-de la Ossa \cite{CdlO90} family of K\"ahler Ricci-flat metrics, as constructed in Section~\ref{sec: metric} describe the local geometry of solutions to the HS system through a conifold transition.  If one assumes that the smoothing $\mathcal{X}\rightarrow \Delta$ is projective, then a deep result of Hein-Sun \cite{HeinSun} says that integral K\"ahler-Ricci flat metrics $\omega_t$ on $X_t$ are quantitatively close to the Candelas-de la Ossa metrics near the special Lagrangian vanishing cycles.  We refer the reader also to \cite{ChiuSzek, XinFu, Song} for some related results.  We will focus primarily on the case when the smoothing is not necessarily K\"ahler.  In this direction, the first progress was made by Fu-Li-Yau \cite{FLY}, who solved the conformally balanced equation~\eqref{eq: introHSbal}.

\begin{thm}[Fu-Li-Yau \cite{FLY}]\label{thm: FLY}
Let $(\widehat{X}, \omega)$ be a K\"ahler Calabi-Yau threefold and $C_i \subset \widehat{X}$, $1 \leq i \leq k$ be a collection of disjoint $(-1,-1)$ rational curves satisfying Friedman's condition~\eqref{eq: FriedmanRelation}.  Let
\[
\widehat{X} \rightarrow X_0 \leadsto X_t
\]
be the conifold transition obtained by contracting the $C_i$  Then:
\begin{enumerate}
    \item For $a$ sufficiently small, there exist hermitian metrics $\omega_{FLY,a}$ on $\widehat{X}$ satisfying:
    \begin{itemize}
    \item[(i)] $d\omega_{FLY,a}^2=0$, and the cohomology class $[\omega_a^2]= [\omega^2] \in H^{2,2}(\widehat{X},\mathbb{R})$ is independent of $a$.
    \item[(ii)] For each $1 \leq i \leq k$, there is a constant $\lambda_i >0$ and a neighborhood $U_i \supset C_i$, independent of $a$, such that $\omega_{FLY,a} = \lambda_i\omega_{co,a}$ in $U_i$.
    \end{itemize}
    \item  Let $\mathcal{X}\rightarrow \Delta$ be the smoothing, with fibers $X_t$. For $|t|$ sufficiently small there exist hermitian metrics $\omega_{FLY,t}$ on $X_t$ satisfying:
    \begin{itemize}
        \item[(i)] $d\omega_t^2=0$
        \item[(ii)] For each $1 \leq i \leq k$, there is a constant $\lambda_i >0$ and a neighborhood $\mathcal{U}_i \subset \mathcal{X}$, containing the node $\pi(C_i)= p_i \in X_0$ such that, in $\mathcal{U}_i$ we have
        \begin{equation}\label{eq: FLYasympotics}
        \omega_{t}|_{X_t\cap \mathcal{U}_i} = \lambda_i \omega_{co,t} + o(1)
        \end{equation}
        as $t\rightarrow 0$.
    \end{itemize}
\end{enumerate}
\end{thm}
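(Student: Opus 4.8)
The plan is to use the fact that on a complex threefold the assignment $\omega \mapsto \omega^2$ is a pointwise bijection from positive Hermitian $(1,1)$-forms onto strictly positive $(2,2)$-forms (the linear-algebra observation of Michelsohn). A balanced metric, i.e. one with $d\omega^2=0$, is therefore the same datum as a $d$-closed strictly positive $(2,2)$-form, from which $\omega$ is recovered by extracting the pointwise positive square root. In both parts I would thus reduce to constructing a closed positive $(2,2)$-form $\Psi$ carrying the prescribed local models and then set $\omega := \sqrt{\Psi}$, so that the balanced condition and positivity are automatic.

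For part (1), on the K\"ahler resolution $\widehat{X}$ fix a K\"ahler form $\omega$, which is balanced. The model $\omega_{co,a}$ of Lemma~\ref{lem: coaMetric} is itself K\"ahler on the resolved conifold, so $(\lambda_i\omega_{co,a})^2$ is closed. I would glue $\omega^2$ to $(\lambda_i\omega_{co,a})^2$ across a fixed annular neck around each $C_i$ by a cutoff, producing a positive $(2,2)$-form $\Psi_a$ equal to the model near $C_i$, closed away from the necks, and with defect $d\Psi_a$ supported in the necks and small as $a\to 0$. Since $\widehat{X}$ is K\"ahler the balanced class is unobstructed: one solves $d\gamma=d\Psi_a$ with $\gamma$ small and supported away from the $C_i$, and sets $\omega_{FLY,a}:=\sqrt{\Psi_a-\gamma}$, which equals $\lambda_i\omega_{co,a}$ exactly near $C_i$ and satisfies $[\omega_{FLY,a}^2]=[\omega^2]$.

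For part (2) the essential difficulty is that $X_t$ need not be K\"ahler. Away from the nodes $X_0\setminus\{p_i\}\cong \widehat{X}\setminus\bigcup_i C_i$, so I would transport $\omega^2$ to the bulk and carry it to $X_t$ through the gluing diffeomorphism $F_t$ of~\eqref{eq: fiberDiffeo}, and near each vanishing cycle $L_i(t)$ glue in $(\lambda_i\omega_{co,t})^2$ using the K\"ahler model of Lemma~\ref{lem: cotMetric}. This yields a positive $(2,2)$-form $\Psi_t$ whose defect $d\Psi_t$ is concentrated in the necks and small as $t\to 0$. The crucial point is that while a global K\"ahler metric is genuinely obstructed here — some combination of the $C_i$ becomes homologically trivial after the transition, which by Wirtinger's inequality forbids a K\"ahler form, exactly as for the generic nodal quintic — the balanced correction only requires trivializing the defect in the degree of $(2,2)$-forms, and one checks that this weaker residual obstruction is killed by Friedman's relation~\eqref{eq: FriedmanRelation}; equivalently, the topology change $N=k+c$ records the $N-k=c$ relations among the $[C_i]$, which is precisely the homological input needed to close up $\Psi_t$. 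One then solves $d\gamma=d\Psi_t$ with $\gamma$ small and decaying toward the cycles and sets $\omega_{FLY,t}:=\sqrt{\Psi_t-\gamma}$; the asymptotics~\eqref{eq: FLYasympotics} follow because $\gamma=o(1)$ on the neck region.

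The hardest step is the \emph{uniform} solvability of the correction $d\gamma=d\Psi_t$ (and its analogue $d\gamma=d\Psi_a$) as the parameter degenerates. As $t\to 0$ the smoothing develops long necks modeled on the Ricci-flat cone $(V_0,\omega_{co,0})$, and a naive Hodge-theoretic inverse loses control over the expanding neck. One must instead invert $d$ in weighted H\"older spaces adapted to the conical asymptotics of $\omega_{co,t}$ established in Lemma~\ref{lem: cotMetric}, bounding the inverse operator independently of $t$ and extracting enough decay for $\gamma$ to leave the prescribed models in~\eqref{eq: FLYasympotics} undisturbed. Setting up this weighted linear theory, and verifying that Friedman's relation cancels the one surviving cohomological obstruction, is where essentially all the work lies.
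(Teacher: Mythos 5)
Your high-level reduction is the right one and matches the philosophy of the cited proof: by Michelsohn's pointwise square root, a balanced metric is the same as a $d$-closed strictly positive $(2,2)$-form, and such a form is to be produced by gluing the Candelas--de la Ossa models into a background (the paper does not prove Theorem~\ref{thm: FLY} in detail -- it is quoted from \cite{FLY} -- but it sketches exactly this gluing strategy). The genuine gap is that you discard the one mechanism the paper explicitly flags as the key observation, and your replacement fails quantitatively. The paper's point is that the local models are $\ddbar$-\emph{exact}: using $(\pi^*\omega_{FS})^2=0$ one has $\omega_{co,a}^2=\ddbar\bigl(\phi_a(8a^2\pi^*\omega_{FS}+\ddbar\phi_a)\bigr)$ on the resolved conifold, and $\omega_{co,t}^2=\ddbar\bigl(f_t\,\ddbar f_t\bigr)$ on the Stein manifold $V_t$; moreover on $U_i$ the background can be written $\omega|_{U_i}=\mu_i\,\pi^*\omega_{FS}+\ddbar v$ (since $H^2(U_i,\mathbb{R})$ is generated by $\pi^*\omega_{FS}$), so its square has a potential of the same structural form. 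Gluing is then done at the level of \emph{potentials}, so the resulting $(2,2)$-form is closed by construction and the only issue is positivity of the cutoff cross-terms. Your version instead interpolates the forms themselves, $\Psi_a=\chi(\lambda_i\omega_{co,a})^2+(1-\chi)\omega^2$, and claims the defect is small as $a\to 0$. That is false: with $U_i$ fixed, Lemma~\ref{lem: coaMetric} gives $(\lambda_i\omega_{co,a})^2\to(\lambda_i\omega_{co,0})^2$ in the gluing annulus, and this limit is not $\omega^2$ (the cone metric is not the ambient K\"ahler metric), so $d\Psi_a=d\chi\wedge\bigl((\lambda_i\omega_{co,a})^2-\omega^2\bigr)$ has a fixed nonzero limit. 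Hence no small correction $\gamma$ exists, and positivity of $\Psi_a-\gamma$ cannot be salvaged. Even granting smallness, $\gamma$ must be a real $(2,2)$-form (inverting $d$ on $5$-forms does not produce one) vanishing \emph{identically} on $U_i$, since conclusion (1)(ii) asserts exact equality $\omega_{FLY,a}=\lambda_i\omega_{co,a}$ there; neither constraint is addressed. The same matching problem recurs in part (2): your background near the vanishing cycles is the transported $\omega^2$, which is not close to the cone, so the gluing defect there is again not small; the missing step, which is the heart of \cite{FLY}, is to first modify the background through the $\ddbar$-exact potential structure so that it agrees with the cone model near the nodes.

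Two further assertions in part (2) are wrong as stated. First, the defect is not concentrated in the necks: $F_t$ of~\eqref{eq: fiberDiffeo} extends the maps $\Phi_t(z)=z+t\bar z/(2\|z\|^2)$ of~\eqref{eq: PhitMap}, which are nowhere holomorphic, so $(F_t^{-1})^*\omega^2$ is closed but not of type $(2,2)$ on $X_t$; what you must actually use is its $(2,2)$-projection, whose exterior derivative is of size $O(|t|)$ spread over \emph{all} of $X_t$, not supported near the vanishing cycles, so a weighted linear theory built around neck-supported data does not match the problem. Second, Friedman's relation~\eqref{eq: FriedmanRelation} does not enter as an unobstructedness statement for any balanced correction equation. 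Its role is upstream: via Theorem~\ref{thm: Friedman} and Theorem~\ref{thm: unobstructed} it guarantees that the smoothing family $\mathcal{X}\to\Delta$ exists at all; once the family exists, the Fu--Li--Yau construction proceeds by the potential-level gluing described above, and there is no residual cohomological obstruction for the relation to ``kill''. Attributing that role to it is a misreading of where the hypothesis is used.
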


\begin{rk}
    We have state Theorem~\ref{thm: FLY} somewhat informally.  The asymptotics~\eqref{eq: FLYasympotics} should be understood to holds in suitably weighted H\"older spaces as $t\rightarrow 0$. We refer the reader to \cite{FLY} for precise statements.
\end{rk}

By a conformal rescaling, the metrics of Theorem~\ref{thm: FLY} give rise to a solution of~\eqref{eq: introHSbal}.  The proof is by a gluing method, using the K\"ahler Ricci-flat metrics constructed in Section~\ref{sec: metric}. An important observation used in the gluing is that, in a neighborhood of $C_i \subset \widehat{X}$ isomorphic to neighborhood of the zero section in $\mathcal{O}_{\mathbb{P}^1}(-1)^{\oplus 2}$, the K\"ahler metric $\omega_{co,a}$ constructed in Section~\ref{sec: metric} satisfies
\[
\omega_{co,a}^2 = 2\ddbar\left( \phi_a \wedge((4a^2\pi^*\omega_{FS} + \ddbar \phi_a)\right) .
\]
Since this $(2,2)$ form is $\ddbar$-exact, it can be glued to a model metric by introducing a cut-off function.  This observation serves to highlight the added flexibility of working with balanced metrics, rather than K\"ahler metrics.  

One can then ask whether there exist solutions of the Hermitian-Yang-Mills equation~\eqref{eq: introHSYM} with respect to the Fu-Li-Yau metrics.  In this direction Chuan \cite{Chuan} proved

\begin{thm}[Chuan \cite{Chuan}]\label{thm: Chuan}
In the setting of Theorem~\ref{thm: FLY} assume that $E\rightarrow \widehat{X}$ satisfies $c_1(E)=0$ and is slope stable with respect to $\omega$.  Suppose in addition that $E$ is trivial in a neighborhood of the $(-1,-1)$ curves.  Let $\pi:\widehat{X}\rightarrow X_0$ denote the contraction map, and suppose that there exists a family of vector bundles $E_t \rightarrow X_t$, smoothing $\pi_*E$.  Then, for all $|t|$ sufficiently small there exist hermitian metrics $H_t$ on $E_t$ such that
\[
\omega_{FLY,t}^2\wedge F_{H_t}=0
\]
\end{thm}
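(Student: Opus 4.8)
The plan is to reduce the problem to a question of \emph{stability} via the Donaldson--Uhlenbeck--Yau correspondence, and then to verify that $E_t$ is stable once $|t|$ is small. The crucial structural input is that the Fu--Li--Yau metrics are balanced, $d\omega_{FLY,t}^2=0$, and hence Gauduchon. For such metrics the degree of a torsion-free coherent sheaf $\mathcal{F}$,
\[
\deg_{\omega}(\mathcal{F}) = \int_{X} c_1(\mathcal{F})\wedge \omega^2,
\]
depends only on the de Rham class $[\omega^2]\in H^4(X,\R)$, since $\omega^2$ is closed; thus slope stability becomes a purely cohomological condition. By the extension of the Donaldson--Uhlenbeck--Yau theorem to Gauduchon metrics due to Li and Yau, a holomorphic bundle with $c_1=0$ admits a Hermitian metric $H$ solving $\omega^2\wedge F_H=0$ if and only if it is $\omega^2$-polystable. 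So it suffices to prove that $E_t$ is stable (equivalently polystable) with respect to $\omega_{FLY,t}$ for small $|t|$.

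The second step records that the hypothesis is already this statement on $\widehat{X}$. As $E$ is $\omega$-stable with $c_1(E)=0$, its slope vanishes; and since $[\omega_{FLY,a}^2]=[\omega^2]$ by part~(i) of Theorem~\ref{thm: FLY}, the degree functional is unchanged, so $E$ is $\omega_{FLY,a}^2$-stable as well. The goal is then to transport stability across the transition $\widehat{X}\to X_0\leadsto X_t$. I would package the bundles into a single coherent sheaf $\mathcal{E}$ on the total space $\mathcal{X}\to\Delta$ of the smoothing, restricting to $E_t$ on $X_t$ and to the locally free sheaf $\pi_*E$ on the central fiber $X_0$; here triviality of $E$ near the contracted curves guarantees that $\pi_*E$ is a bundle, and $\pi^*\pi_*E\cong E$ since $\pi$ is an isomorphism off the curves.

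The heart of the argument is a semicontinuity argument for stability, run by contradiction. If $E_t$ failed to be stable along a sequence $t_j\to 0$, there would be saturated subsheaves $\mathcal{F}_j\subset E_{t_j}$ of fixed rank with $\deg_{\omega_{FLY,t_j}}(\mathcal{F}_j)\geq 0$. Using the uniform control on $\omega_{FLY,t}$ near the vanishing cycles from Theorem~\ref{thm: FLY}(2), together with the cohomological nature of the degree, one bounds the Chern data of the $\mathcal{F}_j$ and extracts a flat limit $\mathcal{F}_0\subset\pi_*E$ on $X_0$; pulling back to $\widehat{X}$ produces a destabilizing subsheaf of $E$, contradicting stability. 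The \textbf{main obstacle} is precisely the control of the degree across the topology change: under the conifold transition $b_2$ drops by $k$ and $[\omega_{FLY,t}^2]$ must be matched with the image of $[\omega^2]$ under the cohomology correspondence of Proposition~\ref{prop: topChange}, so that a subsheaf with nonnegative slope on $X_t$ limits to one with nonnegative slope on $\widehat{X}$. Preventing the degree inequality from leaking into the vanishing classes as $t\to 0$ is the delicate point, and it is here that the triviality of $E$ near the $(-1,-1)$ curves and the \emph{balanced} (hence cohomological) nature of the degree under the Gromov--Hausdorff degeneration of Lemma~\ref{lem: coaMetric} are essential.

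Alternatively, one may bypass stability and build $H_t$ directly by a gluing method matching Theorem~\ref{thm: FLY}: glue a Hermitian--Yang--Mills metric for $E$ over the bulk of $\widehat{X}$ to the flat metric on the trivial bundle over each neck region, where both $E$ and $\omega_{FLY,t}=\lambda_i\omega_{co,t}$ are explicit, to obtain an approximate solution of~\eqref{eq: introHSYM} with small error, and then perturb to an exact solution by the implicit function theorem. On this route the obstacle is analytic rather than algebro-geometric: one needs a uniform lower bound, as $t\to 0$, on the first eigenvalue of the linearized operator acting on trace-free endomorphisms — equivalently a uniform spectral gap reflecting the absence of nontrivial holomorphic endomorphisms of the (poly)stable $E_t$ across the degenerating neck.
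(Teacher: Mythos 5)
The notes themselves state this result as a citation of Chuan \cite{Chuan} and give no proof, so the fair comparison is with Chuan's original argument and with the closely analogous outline the notes do give for Theorem~\ref{thm: CPY}; both of those are gluing arguments, which is what you relegate to your ``alternative'' route.

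Your primary route --- reduce to stability via the Li--Yau \cite{LiYau86} extension of Donaldson--Uhlenbeck--Yau to Gauduchon metrics, then establish stability of $E_t$ for small $|t|$ by a contradiction/semicontinuity argument --- has a genuine gap at exactly the step you call the heart of the argument. To extract a limit of the destabilizing subsheaves $\mathcal{F}_j\subset E_{t_j}$ you need a boundedness/compactness theory for coherent subsheaves of bounded slope in the family $\mathcal{X}\to\Delta$. In the projective or K\"ahler setting this comes from Grothendieck's boundedness lemma, the relative Quot scheme, or Bishop compactness with uniform volume bounds; but here the fibers $X_t$ are in general \emph{non-K\"ahler} (that is the entire reason the Fu--Li--Yau balanced metrics are needed), the polarizations $\omega_{FLY,t}^2$ degenerate near the vanishing cycles as $t\to 0$, and $b_2$ drops across the transition, so there is no a priori reason the classes $c_1(\mathcal{F}_j)$ are bounded or that their limiting degree does not leak mass into the vanishing region. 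No such boundedness theory exists for balanced classes on degenerating non-K\"ahler threefolds; you have flagged this as ``the delicate point'' but offered no mechanism to handle it, and it is plausibly at least as hard as the theorem itself. It is telling that in Theorem~\ref{thm: CPY} the stability of $T^{1,0}X_t$ is obtained as a \emph{corollary} of constructing the Hermitian--Yang--Mills metric, not as an input: direct verification of stability on $X_t$ is precisely what one does not know how to do.

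Your second, gluing route is the correct one and is essentially Chuan's actual proof: put the Hermitian--Yang--Mills metric on $(E,\omega)$ over $\widehat{X}$ (Donaldson \cite{Donaldson}, Uhlenbeck--Yau \cite{UY}, Li--Yau \cite{LiYau86}), transport it to $X_t$ away from the vanishing cycles, use the flat metric on the trivial bundle near the cycles --- where $\omega_{FLY,t}$ is a multiple of $\omega_{co,t}$ and a flat metric solves \eqref{eq: introHSYM} exactly --- and perturb the glued approximate solution. You also correctly identify the crux there: a uniform-in-$t$ inverse for the linearized operator on endomorphisms across the degenerating neck, which rests on simplicity/stability of $E$ and weighted analysis. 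As written this is a sketch rather than a proof, since that uniform spectral estimate is where all the work lies, but it is the right plan; I would promote it to the main argument and discard the semicontinuity route.
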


It seems difficult, in practice, to construct holomorphic vector bundles $E$ satisfying the assumptions of Theorem~\ref{thm: Chuan}, and the cohomological condition~\eqref{eq: chernClassHS}.  The author, with Picard and Yau considered instead the case of $E=T^{1,0}X$, and solved the Hermitian-Yang-Mills equation~\eqref{eq: introHSYM}.

\begin{thm}[C.-Picard-Yau \cite{CPY}]\label{thm: CPY}
In the setting of Theorem~\ref{thm: FLY}, for $|t|\ll1 $ there exists a hermitian metric $H_t$ on $T^{1,0}X_t$ such that
\[
\omega_{FLY,t}^2\wedge F_{H_{t}} = 0.
\]
In particular, $T^{1,0}X_t$ is slope stable with respect to the balanced class $[\omega_{FLY,t}^2] \in H^{2,2}(X_t,\mathbb{R})$.
\end{thm}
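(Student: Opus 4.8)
The plan is to produce $H_t$ by a gluing-and-perturbation scheme, taking advantage of the fact that on each piece of the conifold transition the tangent bundle already carries an explicit Hermitian--Yang--Mills metric.

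First I would assemble an approximately Hermitian--Yang--Mills metric $H_t^{\mathrm{app}}$ on $T^{1,0}X_t$. On the K\"ahler Calabi--Yau resolution $\widehat{X}$ the Ricci-flat metric of Theorem~\ref{thm: Yau} is itself an HYM metric on $T^{1,0}\widehat{X}$: since $c_1(\widehat{X})=0$ the slope vanishes, and Ricci-flatness gives $\Lambda_{\omega}F=\ric=0$, i.e. $\omega^2\wedge F=0$. The same holds on the local models, since the Candelas--de la Ossa metrics $\omega_{co,t}$ and $\omega_{co,a}$ of Lemma~\ref{lem: cotMetric} and Lemma~\ref{lem: coaMetric} are Ricci-flat K\"ahler; in particular, because $\omega_{FLY,t}=\lambda_i\,\omega_{co,t}$ near each vanishing cycle (Theorem~\ref{thm: FLY}), the equation $\omega_{FLY,t}^2\wedge F=0$ is solved there by the model metric. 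Away from the vanishing cycles, on the fixed compact region identified with $\widehat{X}$ minus neighborhoods of the $C_i$ through the diffeomorphism $F_t$ of~\eqref{eq: fiberDiffeo}, I would use the Ricci-flat metric on $T^{1,0}\widehat{X}$. Interpolating the two model metrics across the neck with a cutoff yields $H_t^{\mathrm{app}}$, whose error $\Lambda_{\omega_{FLY,t}^2}F_{H_t^{\mathrm{app}}}$ is supported in the gluing region (together with the small bulk discrepancy between $\omega_{FLY,t}$ and the K\"ahler model) and tends to $0$ in suitably weighted H\"older norms as $t\to 0$.

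Next I would linearize. Writing $H_t=H_t^{\mathrm{app}}\,e^{s}$ with $s$ a trace-free Hermitian endomorphism of $T^{1,0}X_t$ (the determinant part being harmless as $c_1=0$), the HYM equation takes the form $\mathcal{L}_t s+Q_t(s)=-\,\mathrm{err}_t$, where $\mathcal{L}_t$ is a Laplace-type operator on $\mathrm{End}(T^{1,0}X_t)$, $Q_t$ gathers the higher-order nonlinearity, and $\mathrm{err}_t$ is the approximate-solution error. A contraction-mapping argument then produces a genuine solution provided (i) $\mathcal{L}_t$ is uniformly invertible on the $L^2$-orthogonal complement of its kernel, with inverse bounded uniformly in $t$ in the weighted spaces adapted to the degenerating geometry, and (ii) $\mathrm{err}_t$ is small in the same norms, which is arranged in the gluing.

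The main obstacle is the uniform invertibility of $\mathcal{L}_t$ as $t\to 0$: the neck is pinching and the Candelas--de la Ossa region is only asymptotically conical, so one must set up a weighted Fredholm theory on the model ends and exclude eigenvalues collapsing to zero. I would argue by contradiction: were the lowest eigenvalue of $\mathcal{L}_t$ on the trace-free complement to tend to $0$, a rescaling and compactness argument would extract a nonzero bounded solution of the limiting linearized equation on one of the models, that is, either a nonzero trace-free holomorphic endomorphism of $T^{1,0}\widehat{X}$ or a nontrivial $L^2$ harmonic endomorphism-valued object on the Calabi--Yau cone $V_0$. Neither exists: $\widehat{X}$ is Calabi--Yau with irreducible holonomy, so $T^{1,0}\widehat{X}$ is simple and $H^0(\mathrm{End}_0\,T^{1,0}\widehat{X})=0$, while the asymptotically conical model carries no such $L^2$ kernel. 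This gives the required lower bound. Once $H_t$ is constructed, $T^{1,0}X_t$ is polystable with respect to the balanced class $[\omega_{FLY,t}^2]$ by the Li--Yau extension of the Donaldson--Uhlenbeck--Yau correspondence to Gauduchon metrics; simplicity of $T^{1,0}X_t$, inherited through the transition from that of $T^{1,0}\widehat{X}$, then upgrades polystability to slope stability.
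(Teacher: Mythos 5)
Your overall architecture (approximate solution, linearization, contraction mapping) matches the spirit of the final step of the paper's proof, but the construction of your approximate solution $H_t^{\mathrm{app}}$ has a genuine gap that the paper's Steps 1 and 2 exist precisely to repair. The equation you must solve is $\omega_{FLY,t}^2\wedge F_{H_t}=0$, and the K\"ahler Ricci-flat metric on $\widehat{X}$ is Hermitian--Yang--Mills with respect to \emph{itself}, not with respect to $\omega_{FLY,t}$. By Theorem~\ref{thm: FLY}, the Fu--Li--Yau metric agrees with the K\"ahler metric only in the far bulk and with $\lambda_i\omega_{co,t}$ only near the vanishing cycles; in between, on annuli of \emph{fixed} size (the neighborhoods $U_i$ are independent of $t$ and $a$), $\omega_{FLY,t}$ is a genuinely non-K\"ahler balanced metric that is at uniform distance from both models. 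Consequently $\omega_{FLY,t}^2\wedge F_{H_t^{\mathrm{app}}}$ is $O(1)$ on this fixed-scale region, uniformly as $t\to 0$: the error is not supported on a pinching neck and does not tend to zero in any norm, so your contraction scheme has no small parameter and cannot close. There is no explicit HYM metric available in this transition region, which is why the paper proceeds differently: it first solves the equation \emph{exactly} on $\widehat{X}$ with respect to $\omega_{FLY,a}$ via Li--Yau \cite{LiYau86} (using semistability of $T^{1,0}\widehat{X}$ with respect to $[\omega^2]$, which comes from Yau's theorem together with Donaldson/Uhlenbeck--Yau), then extracts the limit $a\to 0$ with Uhlenbeck--Yau-type estimates \cite{UY} to obtain an exact bulk solution $h_0$ on $(X_0)_{reg}$ satisfying $C^{-1}g_{co,0}\leq h_0\leq Cg_{co,0}$ near the nodes, and then proves the quantitative tangent-cone theorem (Theorem~\ref{thm: tangentCone}, via stability and a Poincar\'e inequality following Jacob--Walpuski \cite{JacobWal}): $|h_0-c_0 g_{co,0}|_{g_{co,0}}\leq Cr^{\lambda}$. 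It is this polynomial decay, paired with the Conlon--Hein decay \cite{ConlonHein} of $g_{co,t}$ toward the cone, that lets one glue $h_0$ to a scaled copy of $g_{co,t}$ at a $t$-dependent intermediate scale with errors that vanish as $t\to 0$, so that the singular perturbation argument succeeds.

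Two smaller points. First, on $X_t$ the Fu--Li--Yau metric is only $\lambda_i\omega_{co,t}+o(1)$ near the vanishing cycles (Theorem~\ref{thm: FLY}, part 2(ii)), not exactly equal to the model, so even the ``near-cycle'' piece of your ansatz is only approximately HYM; this error is harmless but must be tracked in the weighted norms. Second, your derivation of stability at the end relies on simplicity of $T^{1,0}X_t$ being ``inherited through the transition,'' which is asserted but not proved; one would need, e.g., a Hartogs-type extension and semicontinuity argument to rule out nonzero trace-free holomorphic endomorphisms appearing on $X_t$. In the paper the logic runs the other way: the existence of $H_t$, produced by the gluing, is what yields stability of $T^{1,0}X_t$ with respect to the balanced class $[\omega_{FLY,t}^2]$ as a corollary.
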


Theorem~\ref{thm: CPY} is again a gluing theorem.  The basic observation is that, by the construction of Fu-Li-Yau, the metric $\omega_{FLY,t}$ is close to the Calabi-Yau metric $\omega_{co,t}$.  As emphasized above, the metrics $\omega_{co,t}$ solve the HS system, and hence it is reasonable to try to construct a solution of ~\eqref{eq: introHSYM} which is a small perturbation of the Candelas- de la Ossa metric $\omega_{co,t}$ near the vanishing cycles.  We emphasize that the gluing argument yields quantitative information near the vanishing cycles, and the pair $(\omega_{FLY,t}, H_t)$ approximately solve the anomaly cancellation equation~\eqref{eq: introAnom} near the vanishing cycles; see \cite{CPY}.  We note that when $X_t = \#_{k}(S^3\times S^3)$, Bozkhov \cite{Boz} proved that $T^{1,0}X_t$ is slope stable using purely algebraic methods.  By the work of Li-Yau \cite{LiYau86} this implies the existence of a Hermitian-Yang-Mills connection.

We now discuss the basic strategy of the proof of Theorem~\ref{thm: CPY}, in order to highlight the role of the Candelas-de la Ossa metrics constructed in Section~\ref{sec: metric}.  The proof  proceeds in three steps.
\bigskip

\begin{proof}[Outline of the proof of Theorem~\ref{thm: CPY} ]
\,
\smallskip

\noindent{\bf Step 1}: Let $(\widehat{X},\omega)$ be a compact, K\"ahler Calabi-Yau manifold.  By Yau's theorem, Theorem~\ref{thm: Yau}, and the work of Donaldson \cite{Donaldson}, Uhlenbeck-Yau \cite{UY}, and Li-Yau \cite{LiYau86}, we know that $T^{1,0}\widehat{X}$ is slope semi-stable with respect to $[\omega^2]$.  Hence, we can find hermitian metrics $h_a$ on $T^{1,0}\widehat{X}$ such that
\[
\omega_{FLY,a}^2\wedge F_{h_a}=0
\]
where $F_{h_a}$ denotes the curvature of the Chern connection, and $\omega_{FLY,a}$ denotes the Fu-Li-Yau balanced metric on $\widehat{X}$ constructed by Theorem~\ref{thm: FLY}.  

We now take a limit of the metrics $h_a$ as $a \rightarrow 0$.  Using the ideas of Uhlenbeck-Yau \cite{UY}, one shows that the metrics $h_a$ converge in $C^{\infty}_{loc}(X \setminus \cup_i C_i)$ to hermitian metric on $T^{1,0}X_0\big|_{(X_0)_{reg}}$ which is Hermitian-Yang-Mills with respect to a balanced metric $\omega_0$ defined on $(X_0)_{reg}$.  A key estimate established in this step, using stability and the Uhlenbeck-Yau technique, is that, near the nodal singularities on $X_0$ there is a constant $C$ so that the limit metric $h_0$ satisfies
\[
C^{-1} g_{co,0} \leq h_0 \leq C g_{co,0}.
\]
\bigskip

\noindent{\bf Step 2}:  We analyze the behaviour of the limit $h_0$. 
 By construction, there is a neighborhood $U$ of each ODP singularity $p\in X_0$ such that the balanced metric $\omega_0$ on $U$ is, up to scale, equal to the Calabi-Yau cone metric on the conifold $V_0$.  Since the Calabi-Yau metric on $V_0$ is already Hermitian-Yang-Mills, it is natural to expect a sort of ``infinitesimal uniqueness" statement for $h_0$.  Precisely, we expect that $h_0$ should decay towards a multiple of the Calabi-Yau cone metric near the ODP singularity.  In fact, we need a quantitative version of this statement.

 \begin{thm}\label{thm: tangentCone} \cite{CPY} Let $V_0 = \{ \sum z_i^2 = 0 \} \subseteq \mathbb{C}^4$ and $\omega_{co,0} = i \partial \bar{\partial} r^2$ with $r^3 = \| z \|^2$. Suppose a metric $h_0$ on $T^{1,0}V_0$ solves the equation
   \[
F_{h_0} \wedge \omega_{mod,0}^2 = 0 \quad {\rm on} \quad   V_0\cap \{ 0 < \|z\| <1 \}
   \]
   with bounds $C^{-1} g_{co,0} \leq h_0 \leq C g_{co,0}$. Then
   \[
| h_0 - c_0 g_{co,0}|_{g_{co,0}} \leq C r^\lambda
   \]
   for some constants $c_0>0$, $C>1$, $\lambda \in (0,1)$.
  \end{thm}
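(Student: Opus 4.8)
The plan is to study the relative endomorphism comparing $h_0$ to the background Calabi--Yau cone metric and to exploit the fact that $g_{co,0}$ is itself a solution of the same Hermitian--Yang--Mills equation. Since $(V_0\setminus\{0\},\omega_{co,0})$ is Ricci-flat K\"ahler, the Chern connection of $(T^{1,0}V_0,g_{co,0})$ satisfies $F_{g_{co,0}}\wedge\omega_{co,0}^2=0$, so both $g_{co,0}$ and $h_0$ are Hermitian--Yang--Mills of slope $0$. Writing $h_0=g_{co,0}\,H$ with $H$ a positive, $g_{co,0}$-self-adjoint section of $\mathrm{End}(T^{1,0}V_0)$, the identity $F_{h_0}=F_{g_{co,0}}+\bar\partial\!\left(H^{-1}\del_{g_{co,0}}H\right)$ shows that $H$ solves the second-order elliptic equation
\[
\Lambda_{\omega_{co,0}}\,\bar\partial\!\left(H^{-1}\del_{g_{co,0}}H\right)=0
\]
on $V_0\cap\{0<\|z\|<1\}$, while the hypothesis $C^{-1}g_{co,0}\le h_0\le C g_{co,0}$ becomes the two-sided bound $C^{-1}\le H\le C$. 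The goal is thus to show $H\to c_0\,\mathrm{Id}$ at a polynomial rate $r^\lambda$.

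First I would isolate the determinant (trace) part. Taking the trace of the equation for $H$ gives $\lapl_{\omega_{co,0}}\phi=0$ for $\phi:=\tfrac13\log\det H$, which is bounded; since the apex has real codimension $6\ge 3$ it is a removable singularity for bounded harmonic functions, and separation of variables on the metric cone $C(T^{1,1})$ yields $\phi=c+O(r^{\gamma_1})$, where $\gamma_1>0$ is the first positive indicial root attached to the lowest nonzero eigenvalue of the link Laplacian. Normalizing $\tilde h_0:=(\det H)^{-1/3}h_0$ produces $\tilde H:=g_{co,0}^{-1}\tilde h_0$ with $\det\tilde H\equiv 1$; because $\phi$ is harmonic, $\tilde h_0$ remains Hermitian--Yang--Mills of slope $0$, so it suffices to prove $\tilde H\to\mathrm{Id}$ at rate $r^\lambda$ and then set $c_0=e^{c}$.

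For the trace-free part the key structural input is scale invariance: the cone metric is self-similar and the equation is homogeneous, so rescaling the annulus $\{r\sim 2^{-j}\}$ to unit size returns a fixed elliptic problem on a fixed annulus in $C(T^{1,1})$ with uniformly controlled geometry. Combined with $C^{-1}\le\tilde H\le C$, interior Schauder estimates then give uniform scale-invariant $C^{k,\alpha}$ bounds for $\tilde H$ across all dyadic annuli, so the nonlinear terms are genuinely lower order at every scale. Writing $\tilde H=\exp(s)$ with $s$ trace-free Hermitian, the equation reads $\lapl_{\omega_{co,0}}s=Q(s,\nabla s)$ with $Q$ quadratic, a lower-order perturbation of the cone Laplacian on sections of $\mathrm{End}(T^{1,0}V_0)$. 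The admissible homogeneities are indicial roots $r^{\gamma}$ with $\gamma$ paired to link eigenvalues; boundedness excludes every $\gamma\le 0$ save the constant mode. That constant mode is a parallel trace-free self-adjoint endomorphism of $T^{1,0}V_0$, which would reduce the holonomy of the irreducible Ricci-flat K\"ahler cone below $SU(3)$; since the conifold cone is irreducible, Schur's lemma forces it to vanish, so $s$ lives in the positive-exponent modes and $|s|_{g_{co,0}}\le C r^{\lambda}$ for $\lambda$ the smallest positive indicial root (shrunk into $(0,1)$ if needed).

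The main obstacle I expect is turning this indicial-root/spectral-gap heuristic into a rigorous decay estimate in the presence of the quadratic nonlinearity on a non-compact, singular domain; this is a uniqueness-of-tangent-cone-with-rate statement. I would make it rigorous either by a three-annulus (frequency monotonicity) iteration showing that the scale-invariant energy $\int_{\{r\sim 2^{-j}\}}|s|^2$ contracts geometrically once the non-positive modes are excluded, or by weighted elliptic theory on the cone (Lockhart--McOwen, or the Melrose b-calculus), where the absence of indicial roots in the window $(0,\lambda]$ yields the decay directly; care is also needed to check that the already-decaying trace part feeds into the trace-free equation only as lower-order forcing. Combining $\phi=c+O(r^{\gamma_1})$ with $|s|\le C r^\lambda$ gives $H=e^{\phi}\exp(s)=c_0\,\mathrm{Id}+O(r^{\min(\gamma_1,\lambda)})$, which is the assertion after relabeling the exponent.
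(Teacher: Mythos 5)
Your overall skeleton---write $h_0 = g_{co,0}H$, observe that both metrics are Hermitian--Yang--Mills so that $\Lambda_{\omega_{co,0}}\bar\partial\left(H^{-1}\partial H\right)=0$, kill the determinant part by harmonicity, and get decay of the trace-free part from a spectral gap on the link via a three-annulus or weighted-space iteration---has the right shape, and it matches the strategy the notes attribute to \cite{CPY}: a Poincar\'e-type inequality in the style of Jacob--Walpuski \cite{JacobWal}, combined with an iteration that absorbs the quadratic nonlinearity. The genuine gap is in the step that is the heart of the theorem: the exclusion of the bounded, homogeneous degree-zero trace-free mode. You assert that this mode is a \emph{parallel} trace-free self-adjoint endomorphism and then kill it by irreducibility of the holonomy of the cone. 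But a bounded degree-zero solution of the linearized equation is not a priori parallel, and holonomy says nothing about non-parallel sections. What integration by parts (over annuli in the cone, using the K\"ahler identities and self-adjointness) actually gives is that such a mode is a \emph{holomorphic} endomorphism of $T^{1,0}V_0$, homogeneous of degree zero; holomorphic does not imply parallel. What kills it is \emph{stability} (simpleness) of the tangent sheaf of the conifold: Schur's lemma for stable sheaves forces a bounded holomorphic endomorphism to be a constant multiple of the identity, hence zero when trace-free. This is precisely the ingredient the paper flags---``established using stability, together with a Poincar\'e type inequality''---and it is a substantive input requiring proof in \cite{CPY}, not a formal consequence of irreducibility of the metric cone.

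Your holonomy instinct can be repaired, but the repair is itself a proof of simpleness, and every step of it is missing from your write-up: first show the mode is holomorphic; then note that for a holomorphic self-adjoint $s$ the functions ${\rm tr}(s^k)$ are holomorphic and real, hence constant, so $s$ has constant eigenvalues; then argue that if $s$ were not a multiple of the identity, its eigenbundles would give an orthogonal holomorphic splitting of $T^{1,0}V_0$, which is parallel and contradicts irreducible holonomy. Without this chain (or a direct appeal to stability of $T V_0$), the exclusion of the kernel---on which every version of the decay iteration rests---is unproven. The remaining analytic issues (scale-invariant Schauder estimates, contraction across dyadic annuli in the presence of the quadratic nonlinearity) are correctly identified and your proposed tools are appropriate, but they are left as a plan; as stated, the proposal establishes neither the spectral-gap input nor the nonlinear iteration, only the reduction to them.
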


This result is established using stability, together with a Poincar\'e type inequality, building on work of Jacob-Walpuski \cite{JacobWal}.
\bigskip

 \noindent{\bf Step 3}: We now glue the metric $h_0$ to the Calabi-Yau metrics $\omega_{co,t}$ on the smoothing of the conifold.  Precisely, as in~\eqref{eq: fiberDiffeo}, we let $F_t$ denote the global extensions of the nearest point projection maps $\Phi_t$ defined in ~\eqref{eq: PhitMap}.  Then $K_{t}:= [(F_t^{-1})^*h_0]^{(1,1)}$ is a hermitian metric defined away from the special Lagrangian vanishing cycles, and which is quantitatively ``approximately Hermitian-Yang-Mills".  In an annulus region around the vanishing cycle, $K_t$ is close to a multiple of $[(\Phi_{t}^{-1})^*g_{co,0}]^{(1,1)}$ by Theorem~\ref{thm: tangentCone}.  On the other hand, by a result of Conlon-Hein \cite{ConlonHein}, $g_{co,t}$ decays at infinity towards $[(\Phi_{t}^{-1})^*g_{co,0}]^{(1,1)}$.  Thus, we can glue a suitably scaled down copy of $g_{co,t}$ to $K_{t}$ to obtain an approximately Hermitian-Yang-Mills metric, since $\omega_{FLY,t}$ is approximately equal to (a multiple of) $g_{co,t}$ near the vanishing cycles.  The result is a hermitian metric on $T^{1,0}X_t$ which is quantitatively close to being Hermitian-Yang-Mills with respect to the balanced metrics $\omega_{FLY,t}$.  A singular perturbation argument implies that we can perturb the metric we have constructed to a genuine Hermitian-Yang-Mills metric.
\end{proof}

Garcia-Fernandez-Molina-Streets \cite{GFMS} have proposed that their string algebroid pluriclosed flow produces birational maps contracting $(-1,-1)$ rational curves as infinite time singularities, at least for appropriate choices of initial data.  Friedman \cite{Fri19}, and Li \cite{Li} have shown that if $\widehat{X} \rightarrow X_0 \leadsto X_t$ is a conifold transition starting from a Calabi-Yau $\widehat{X}$ satisfying the $\ddbar$-lemma, then $X_t$ satisfies the $\ddbar$-lemma. 

\subsection{The ``reverse" conifold transition}

So far we have focused on a conifold transition $\widehat{X}\rightarrow X_0 \leadsto X_t$, in which $\widehat{X}$ is K\"ahler.  Of course, it is equally interesting to consider the ``reverse".  That is, one starts with a family K\"ahler Calabi-Yau manifolds $\mathcal{X} \rightarrow \Delta$, whose central fiber $X_0$ has nodal singularities.  We then get a conifold transition by considering the small resolution $\widehat{X}\rightarrow X_0$.  As discussed in Section~\ref{sec: CY3}, if $\mathcal{X}$ is a generic family of quintic threefolds, then $X_0$ has only a single nodal singularity, which implies by Friedman's theorem that $\widehat{X}$ is non-K\"ahler.  One can then ask whether $\widehat{X}$ admits solutions of the HS system.  Giusti-Spotti \cite{GiustiSpotti2} have used a gluing consutrction, together with the results of Hein-Sun \cite{HeinSun} to produce Chern-Ricci flat balanced metrics on $\widehat{X}$ in this setting

\subsection{Special Lagrangians}

As discussed in Section~\ref{sec: geomConifold}, the vanishing cycles of conifold smoothing $V_0\leadsto V_t$ can be naturally thought of as special Lagrangians; recall Definition~\ref{defn: slag} and Lemma~\ref{lem: sLagMod}.  When $X_0 \leadsto X_t$ is a projective smoothing, then Hein-Sun \cite{HeinSun} showed that the vanishing cycles could be chosen to be special Lagrangian three-spheres with respect to the Calabi-Yau structure.  When $X_0\leadsto X_t$ is a smoothing with non-K\"ahler fibers, then the notion of special Lagrangian still makes sense, but we do not require that the hermitian form $\omega$ is K\"ahler.  In this case, it was shown by Harvey-Lawson \cite{HarveyLawson} that special Lagrangian manifolds minimize conformally rescaled volume functional
\[
L \mapsto \int_{L} |\Omega| dVol_{L}
\]
We remark that these objects were subsequently rediscovered in the physics literature by Becker-Becker-Strominger \cite{BBS}.  It was shown by the author, Picard, Gukov and Yau \cite{CGPY} that for a general non-K\"ahler degeneration, the vanishing cycles can be taken to be special Lagrangian with respect to the Fu-Li-Yau hermitian structure from Theorem~\ref{thm: FLY}.  These extended special Lagrangians are still rather mysterious.  For example, as shown in \cite{CGPY}, their deformation theory differs from the standard deformation theory of special Lagrangians in a K\"ahler manifold \cite{Mclean}.

\end{document}